\theoremstyle{plain}
\newtheorem{theorem}{Theorem}
\newtheorem{lemma}{Lemma}
\newtheorem{proposition}{Proposition}[section]
\theoremstyle{definition}
\newtheorem{definition}{Definition}[section]
\theoremstyle{remark}
\newtheorem{remark}{Remark}[section]
\newtheorem{example}{Example}[section]
\newcommand{\un}{\underline}
\newcommand{\Z}{\mathbb{Z}}
\newcommand{\la}{\lambda}
\newcommand{\al}{\alpha}
\newcommand{\be}{\beta}
\newcommand{\de}{\delta}
\newcommand{\De}{\Delta}
\newcommand{\C}{\mathbb{C}}
\newcommand{\N}{\mathbb{N}}
\newcommand{\bigo}{\bigoplus}
\newcommand{\ot}{\otimes}
\newcommand{\mf}{\mathfrak}
\newcommand{\dis}{\displaystyle}
\newcommand{\mcal}{\mathcal}
\newcommand{\op}{\oplus}
\newcommand{\til}{\tilde}
\newcommand{\wtil}{\widetilde}
\newcommand{\bs}{\boldsymbol}
\newcommand{\mbf}{\mathbf}
\numberwithin{equation}{section}
\numberwithin{lemma}{section}
\numberwithin{theorem}{section}
\begin{document} 
\title[]{Skew Symmetric Extended Affine Lie algebras}
\author[]{S. Eswara Rao and Priyanshu Chakraborty}
\address{S. Eswara Rao: School of Mathematics, Tata Institute of Fundamental Research, Homi Bhaba Road, Colaba, Mumbai 400005, India.}
\email{ senapati@math.tifr.res.in, sena98672@gmail.com.}
\address{Priyanshu Chakraborty: School of Mathematical Sciences,
	East China Normal University,
	Shanghai, 200241, China.}

\email{    priyanshuc437@.gmail.com}

\keywords{Toroidal Lie algebras, Extended affine Lie algebras, Hamiltonian algebra, Contact algebra.}
\subjclass [2010]{17B67, 17B66}
\maketitle

\begin{abstract} 
For any skew symmetric matrix over complex numbers, we introduce an EALA and it is called Skew Symmetric Extended Affine Lie Algebra (SSEALA). This way we get a large class of EALAs and most often they are non-isomorphic. In this paper we study irreducible integrable modules for SSEALA with finite dimensional weight spaces. We classify all such modules in the level zero case with non degenerate skew symmetric matrix.

\end{abstract}

\bigskip
\noindent
{\bf{Notation:}} 
\begin{itemize}
\item Throughout this paper we will work over the base field $\C$.
\item Let $\C, \mathbb{R}, \Z$ denote the set of complex numbers, set of real numbers and set of integers respectively. 
\item Let $\Z_+$ and $\mathbb N $ denote set of non-negative integers and set of positive integers respectively. For $n \in \N$, $\C^n = \{(x_1, \ldots , x_n) : x_i \in \C, 1 \leq i \leq n\}$ and
  $\mathbb{R}^n, \Z^n, \N^{n}$ and $\Z_{+}^{n}$ are defined similarly. 
 \item  Elements of $\C^n, \mathbb{R}^n$ and $\Z^n$ are written in boldface.
 
 \item For any Lie algebra $\mathfrak{g}$, $U(\mathfrak{g})$ will denote universal enveloping algebra of $\mathfrak{g}$.
 \item Let $(\cdot  | \cdot)$ denote the standard inner product on $\C^n$.
 \item For any matrix $B$, $B^T$ denote the transpose matrix of $B.$ 
 \item For all $n \in \N$,  $GL(n, \C)$ denote the set of all $n \times n$ invertible matrices over $\C$.  
\end{itemize}

\section{Introduction} 
In this paper we introduce a class of Extended Affine Lie Algebras (EALA) which we call Skew Symmetric Extended Affine Lie Algebras ( for short SSEALA). For any skew symmetric matrix $B$ $(B^T=-B)$ over complex numbers we define an EALA $\tau_B$ which we call  Skew Symmetric Extended Affine Lie Algebra (SSEALA). The first author in \cite{RH} introduced Hamiltonian Extended Affine Lie Algebra (HEALA) and a Contact Extended Affine Lie Algebra (KEALA). They both are SSEALA and the underlying skew symmetric matrix is non-degenerate for HEALA (see Example \ref{e3.1}) and degenerate for KEALA (see Example \ref{e3.2}).\\

EALAs have been extensively studied in the last two decades, see \cite{AABGP,AG,N1,N2,ABFP} and references therein. There authors mainly study the structure of an EALA and able to give a definite shape to the core of EALA (see subsection \ref{co2.1} for the definition of core of an EALA and in particular for $\tau_B$ core is $\mf g \ot \mcal A \op \wtil {\mcal Z}$, for more details see the definition of $\tau_B$ from Section 3). Later several authors studied representations of EALAs, see (\cite{RH,RSB,TB,CLT1,CLT2,PR,SP1,SP2,YY}). In fact the first author of the current paper has classified irreducible integrable modules for HEALA (both level zero and non-zero level) in \cite{RH}.  \\

In this paper after introducing SSEALA we attempt to study integrable modules for these EALAs assuming the skew symmetric matrix to be non-degenerate. In the degenerate case the problem is more challenging and our methods do not apply.\\


We will now give details of each section. In Section 2 we recall the definitions of a toroidal Lie algebra, a full toroidal Lie Algebra, a toroidal Extended Affine Lie Algebra and a Hamiltonian Extended Affine Lie Algebra. We note that the root systems of all these algebras are the same but with different multiplicities. We also recall the definition of EALA and provide some examples. \\
In Section 3, for any skew-symmetric matrix $B$, we define the Skew Symmetric Extended Affine
Lie Algebra (SSEALA) $\tau_B$, which is the main object of study in this paper. Basic properties of these
algebras are established in Proposition 3.1. In Examples 3.1 and 3.2 we show that HEALA and KEALA
are SSEALAs by exhibiting the corresponding skew symmetric matrices. Since it is well-known (see for example \cite{Sam P}, Lemma 3.1) that
for every non-degenerate skew symmetric $B$ there exists an invertible matrix $A$ such that $A^TBA = J_m,$
where $J_m=   \begin{pmatrix}
	0 & I_{m \times m} \\
	-I_{m \times m} & 0 \\ 
\end{pmatrix} ,$
a natural question arises whether $\tau_B$ with non-degenerate $B$ is isomorphic to $\tau_{J_m}$. Yet
this turns out to be the case only if $A \in GL(2m, \Z)$.

Section 4 is our main technical section. Let us denote $\mcal A= \C[t_1^{\pm{1}},t_2^{\pm{1}},\dots, t_N^{\pm{1}}].$ All EALAs we are working with contains a derivation algebra $H_{*}$ which is a subalgebra of derivation algebra of $\mcal A,$ denoted by $Der \mcal A.$ Such an algebra for a
SSEALA $\tau_B$ will be denoted by $H_B$. As seen previously in case of Toroidal Extended Affine Lie Algebra (TEALA) in \cite{RSB} and Hamiltonian Extended Affine Lie Algebra (HEALA) in \cite{RH}, the problem of classifying irreducible integrable modules for $\tau_B$  reduces to that of classifying certain Jet modules over $H_{B'} \ltimes \mcal A$, where $B'$ is obtained from $B$ by removing some rows and the corresponding columns of $B.$ 
Now we will state the main theorem of Section 4. We need some notation for that. We assume that B is non degenerate so $N= 2m$ (see Lemma 3.1). Set
$$H_B=span\{D(B{\mbf r},{\mbf r}): {\mbf r} \in \Z^N \setminus \{\mbf 0\}\}\, \text{and} \,\, \mcal D=span\{D(\mbf u,\mbf 0): {\mbf u} \in \C^N \}, $$
see equation (2.1) for the definition of $D(\mbf u,\mbf r)$,  $ \forall \,\,\mbf r \in \Z^N, \, \mbf u \in \C^N$ and set
$$ \widetilde {H_B}=H_B \op \mcal D.  $$
 Note that $\widetilde {H_B}$ is a subalgebra of $Der \mcal A$ and hence the action of $Der \mcal A$ on $\mcal A$ induces an action of $\widetilde {H_B}$ on $\mcal A$. This action defines a Lie algebra structure on $\wtil{H_B} \ltimes \mcal A$. The following theorem states about irreducible jet module (see Remark 4.1 for definition of jet modules) structures for $\wtil{H_B} $. \\

\begin{theorem}\label{t1.1}
	
	(1) Let $V$ be an irreducible jet module for $\wtil{H_B} $ with finite dimensional weight spaces. Then $V \simeq V_{\mbf 0} \otimes \mcal A$ as $\wtil{H_B} $ module, where $V_{\mbf 0}$ is an irreducible module for a reductive Lie algebra whose semi simple part is $\mf{sp}_{2m}$.

\end{theorem}

For more details on Theorem 1.1, see Theorem 4.1 and Remark 4.1.

In Section 5 we will classify irreducible integrable modules for $\tau_B$ (see construction of $\tau_B$ in section 3) with finite dimensional weight spaces on the assumption that B is non degenerate and the level is zero ( SSEALA comes with a finite dimensional center. A module for SSEALA is called level zero if the finite dimensional center acts trivially. Otherwise it is called non-zero level module). In Proposition 5.1 we proved that when the level is zero the center of the core (see section 2.1) acts trivially on an irreducible integrable module with finite dimensional weight spaces. One should observe that Lie algebra $\tau_B$ obtained from the multiloop algebra of finite dimensional simple Lie algebras and the following theorem states how irreducible modules for finite dimensional simple Lie algebras occurs in the irreducible integrable modules for $\tau_B$.
\begin{theorem}
	Let $V$ be an irreducible integrable level zero module for $\tau_B$ having finite dimensional weight spaces with respect to $\wtil {\mf h}$ (see Section 2 for definition of $\wtil {\mf h}$). We also assume that $B$ is non degenerate and the core of $\tau_B$ acts non-trivially on $V$. Then there exists a finite dimensional irreducible module $V(\mu)$ for $\mf g$ and a finite dimensional irreducible module $P$ for $\mf{sp}_{2m}$ such that $V\simeq V(\mu)\ot P \ot \mcal A$. The action of $\tau_B$ on $ V(\mu)\ot P \ot \mcal A$ given by following:\\
	$X({\bf r}).(v\ot w\ot t^{\bf s})=X.v\ot w\ot t^{\bf r+s}$, for all $X \in \mf g$, $\bf r, \, s$ $ \in \Z^N,$ $v \in V(\mu), w \in P$,
	action of $\wtil {H_B}$ is given by (4.3), (4.4) and center of core acts trivially on $V$.
\end{theorem}

In Section 6 we classify irreducible integrable modules for KEALA in the non-zero level case. The level zero case is still open. The motivation for their study is the following. 
 Recall that Lie algebras of Cartan type which arise in the study of vector fields on manifolds
 belong to four infinite series, namely the general, special, Hamiltonian and contact, denoted respectively
 by $W_N , S_N , H_N$ and $K_N $.  We have EALAs for type S (TEALA, see Example 2.2), and type H (HEALA, see Example 3.1). For type W we have full toroidal Lie algebra which will fall short of EALA as it satisfy all the axioms of EALA except the existence of non-degenerate invariant bilinear form. This problem can be rectified by taking a suitable subalgebra of a derivation algebra and this is TEALA (S type).\\
 Rudakov \cite{RA} has defined all four types of Cartan type Lie algebras but over a polynomial algebra of several variables. All types can be generalized over Laurent polynomial algebras except type $K$ as noted by J. Talboom, \cite{TH}. By closely looking at type $K$ of Rudakov, First author proposed in \cite{RH} KEALA ( Example 3.2) for $K$ type in the Laurent polynomial algebra case. We will now state a classification result for KEALA in the non zero level case. The corresponding skew symmetric matrix is $\bar J$ (see Example 3.2) which is of co-rank 1, i.e $N= 2 m + 1$. We choose $B'$ by removing a suitable row and the corresponding column so that $B'=J_m$ which is non degenerate. Then we apply Theorem \ref{t1.1} to get the classification theorem for in this case.
 \begin{theorem}
 	Let $V$ be an irreducible integrable non zero level module for $\tau_{\bar J}$ with finite dimensional weight spaces with respect to $\wtil {\mf h}$ and $\mf h$ acts non-trivially on $V$. Then after twisting the module by an automorphism of $\tau_{\bar J}$, we have $V \simeq L(P'\ot \mcal A_{2m})$, where $P'$ is a finite dimensional irreducible $\mf{sp}_{2m} $ module. Here $L(P'\ot \mcal A_{2m})$ is the unique irreducible quotient of an induced module (see subsection 6.1 for the construction of $L(P'\ot \mcal A_{2m})$).
 \end{theorem}

\section{Toroidal Lie algebras and Full toroidal Lie algebras}
Fix a positive integer $N.$ Let $\mcal A= \mcal A_{N}= \C[t_1^{\pm{1}},t_2^{\pm{1}},\dots, t_N^{\pm{1}}]$ denote the Laurent polynomial ring in $N$ commuting variables. For ${\bf{r}}=(r_1,r_2, \dots ,r_N) \in \Z^N$ denote $t^{\bf{r}} = t_1^{r_1}t_2^{r_2} \dots t_N^{r_N} \in \mcal A.$\\
Let $\mf{g}$ be a finite dimensional simple Lie algebra and $\mf{h}$ be a Cartan subalgebra of $\mf{g}$. It is well known that $\mf{g}$ has a root space decomposition with respect to $\mathfrak{h}$ given by
$\mf{g}=\mf{h} \op \dis{ \bigoplus_{\alpha \in \Delta}} \mf{g}_\alpha $, where $\De$ is the corresponding finite root system. Let $<.,.>$ be a symmetric, non-degenerate, invariant bilinear form on $\mf{g}$.  \\

Let us consider the multi-loop algebra $\mathfrak{ g} \otimes  \mcal A$ with the usual Lie bracket.
Let \begin{center}
    
$\Omega _{\mcal A} =span\{t^{ {\mbf r}}K_i: 1\leq i\leq  N, \mbf{r}  \in \Z^N  \},$ where $K_i=t_i^{-1}dt_i$, for $1 \leq i \leq N$.\end{center} 
We define a $\Z^N$-grading on $\Omega _{\mcal A}$ by setting deg ${t^{\mbf r}K_i=\mbf r}$, $ 1 \leq  i \leq N$. Clearly, each homogeneous component of $\Omega_{\mcal A}$  is $N$-dimensional. Let $d_{\mcal A}$ be the subspace of $\Omega_{\mcal A}$ defined by $ d_{\mcal A}= span\{\displaystyle{\sum_{i=1}^{N}}  r_i  t^{\mbf{r} } K_i:   {\mbf{r}}\in \mathbb{Z}^N   \}$ and set ${\mcal Z}=\Omega_{\mcal A}/d_{\mcal A}$. Then $\mcal Z$ is $\Z^N$-graded, more precisely each non-zero graded components of $\mcal Z$ are $N-1$ dimensional and zeroth graded component is $N$ dimensional. Let $K({\mbf{u},\mbf{r}})=\dis{\sum_{i=1}^{N}{u_i{t^{\mbf{r}}} K_i}}$ for ${\mbf{u}} \in \C^ N, {\mbf r} \in \Z^ N$. Now we define toroidal Lie algebra. As a vector space toroidal Lie algebra is given by 
$$ \tau = \mf{g} \ot \mcal A \oplus \mcal Z \op \mcal D ,  $$ where $\mcal D$ is the degree derivation space spanned by $\{d_1,d_2,\dots, d_N \}$. Before defining the Lie bracket of the toroidal Lie algebra we fix some notations for this paper. For convenience we set $X({\mbf {r}})= X \ot t^{\mbf r}$ and $\mf{g}({\mbf r})= \mf{g}\ot \C t^{\mbf r}$, $\mbf r \in \Z^N$ and $X \in \mf{g}.$ Lie brackets of the toroidal Lie algebra are given by 
\begin{enumerate}
    \item[A1.] $[X({\mbf r}), Y({\mbf s})]=[X,Y]({\mbf{r+s}}) + <X,Y>K({\mbf{r,r+s}}).$
    \item[A2.] $\mcal Z$ is central in $\mf{g} \ot \mcal A.$
    \item[A3.] $[d_i, X({\mbf r})]=r_iX({\mbf r}),$ $[d_i, K({\mbf u, \mbf r})]=r_iK({\mbf u,\mbf r})$, $[d_i,d_j]=0,$\\ for all ${\mbf r, \mbf s} \in \Z^N, {\mbf u} \in \C^N, X, Y \in \mf{g}, 1 \leq i, j \leq N.$ 
    \end{enumerate}

Let $\wtil{\mf h}=\mf h \op \mcal Z_0 \op \mcal D$, where $\mcal Z_0=$ span $\{K_1, K_2, \dots, K_N \}$. This $\wtil {\mf h}$ plays the role of Cartan subalgebra for toroidal Lie algebra $\tau.$ One should observe that the toroidal Lie algebra is the $N$ variable generalization of affine Kac-Moody Lie algebra (see \cite{VK} for affine Kac Moody Lie algebras). For more details on toroidal Lie algebras one can see \cite{MRY}.\\
Let $Der \mcal A$ be the derivation algebra of $\mcal A.$ It is well known that $Der \mcal A$ has a basis given by the vectors $\{t^{\mbf r}d_i: {\mbf  r} \in \Z^N, 1 \leq i \leq N \},$ where $d_i=t_i \frac{d}{d t_i}$. Let 

\begin{align}
	D({\mbf u, \mbf r})= \dis{\sum_{i=1}^{N}}u_it^{\mbf r}d_i,
	\end{align} where ${\mbf u}=(u_1,u_2,\dots, u_N) \in \C^N$ and ${\mbf r} \in \Z^N.$ Then $Der \mcal A$ forms a Lie algebra with respect to the bracket operation:
\begin{align} 
	\left[ D({\mbf u,\mbf r}),D({\mbf v,\mbf s}) \right]=D({\mbf w,\mbf r+ \mbf s}),
\end{align}
where $ {\mbf w}= ({\mbf u}| {\mbf s}){\mbf v} -({\mbf v}| {\mbf r}){\mbf u} $ for all ${\mbf r, \mbf s} \in \mathbb{Z}^N$, ${\mbf u, \mbf v} \in \mathbb{C}^N,$ $(\mbf u| \mbf v)=$ $\dis{\sum_{i=1}^{N}}u_iv_i $, for ${\mbf u}=(u_1,\cdots,u_N)$, ${\mbf v}=(v_1,\cdots,v_N)$.\\
It is known that $Der \mcal A$ admits an abelian extension on $\mcal Z$ by the following actions:
\begin{align}
    [ D({\mbf u,\mbf r}),D({\mbf v,\mbf s})]=D({\mbf w,\mbf r+ \mbf s})-({\mbf u}| {\mbf s})({\mbf v}| {\mbf r})K({\mbf{r,r+s}}),
\end{align}
\begin{align}\label{a2.3}
	[ D({\mbf u,\mbf r}),K({\mbf v,\mbf s})]=({\mbf u| \mbf s})K({\mbf v,\mbf r+ \mbf s})+({\mbf u}| {\mbf v})K({\mbf{r,r+s}}),
\end{align}
for more details see \cite{RM1}.
Moreover it is known from \cite{RSS} that $Der \mcal A$ has no non trivial central extension for $N\geq 2.$ For $N=1$ above defined abelian extension becomes central extension. Now we are prepared to define the full toroidal Lie algebra. As a vector space, the full toroidal Lie algebra $\til \tau= \til \tau_{\mf g}$ corresponding to $\mf g$ is equal to,
$$ \til \tau =  \mf{g} \ot \mcal A \oplus \mcal Z \op Der \mcal A. $$
The Lie bracket on $\til \tau$ is given by A1, A2, (2.2), (2.3) and the following:
\begin{align}
    [ D({\mbf u,\mbf r}),X({\mbf s})]=({\mbf u| \mbf s})X({\mbf r+\mbf s})
\end{align}
for all ${\mbf r, \mbf s} \in \mathbb{Z}^N$, ${\mbf u} \in \mathbb{C}^N.$ An interested reader can see \cite{Y1,RSM} for representations of full toroidal Lie algebras.\\
 Let $GL(N, \Z)$ be the groups of integral matrices with determinant $\pm 1$. Then there is a natural action of $GL(N,\Z)$ on $\Z^N.$ Let us denote this action by $F {\mbf r}$ for $F \in GL(N, \Z)$ and ${\mbf r} \in \Z^N.$ Given $F \in GL(N,\Z) ,$ the assignments:
\begin{align}\label{a2.5}
    F.X({\bf r})=X(F{\bf r}), \hspace{.5cm} F.K({\bf u,\bf r})=K(F{\bf u}, F{\bf r}), \hspace{.5cm } F.D({\bf u,\bf r})=D((F^T)^{-1}{\bf u}, F{\bf r}).
\end{align}
define an automorphism of $\til \tau $ which we denote by $\Phi_F$ (see \cite{RJ} for details).\\
We now define roots and coroots of $\til \tau.$ Note that root system of $\til \tau$ is same as the root system of toroidal Lie algebra. Therefore we just recall root system for $\tau$ from \cite{RT}. For $1 \leq i \leq N$ define $\delta_i, \omega  _i \in {\wtil{\mf h}}^{\ast} $ (${\wtil{\mf h}}^{\ast}$ represents the dual of $\wtil{\mf h}$ ) such that
\begin{center}

$ \delta _i (\mathfrak{ h}) =0, \hspace{.5cm} \delta _i (K_j)=0 , \hspace{.5cm} \delta _i (d_j)= \delta_{ij} ,  \hspace{.5cm} 1 \leq j \leq N; $\\
 
$ \omega_i (\mathfrak{ h}) =0  , \hspace{.5cm} \omega _i (K_j)= \delta _{ij} , \hspace{.5cm} \omega_i (d_j)=0 ,  \hspace{.5cm} 1 \leq j \leq N. $ 
\end{center}
For $  {\bf {m}} =(m_1,...,m_N) \in \mathbb{Z}^N $, set  $ \delta_ {\bf m}= $ $ \displaystyle \sum_{i=1}^{N} m_i \delta_i $. Let $\pi=$ $ \lbrace \alpha_1 , \alpha_2,..., \alpha_d \rbrace$ be a set of simple roots of  ${\Delta} $ and $\pi^\vee=\{\alpha_1^\vee, \alpha_2^\vee,\dots, \alpha_d^\vee\}$ be the set of corresponding coroots of $\De$, where $d=$ dim $\mf{h}$. Then the set of vectors $\{ \alpha_1 , \alpha_2,..., \alpha_d,\de_1,\de_2,\dots, \de_N, \omega_1,\omega_2,\dots, \omega_N\}$ forms a basis for ${\wtil {\mf{h}}}^*$. Now we extend the bilinear form of $\mf h^*$ to a symmetric non-degenerate bilinear form on ${\wtil {\mf{h}}}^*$ by defining:
\begin{center}
    $	 <\alpha , \delta_k>=<\alpha, \omega_k>=0 $,\hspace{.2cm} for all $ \alpha\in \De, 1 \leq k \leq N ,$\\
    	$ < \delta_k,\delta_p>=<\omega_k, \omega_p>=0 ,  \hspace{.2cm}<\omega_k,\delta_p>=\delta_{kp}, \hspace{.2cm  } 1 \leq k,p \leq N . $
\end{center}
Again $\wtil{\mf h}$ has a basis $\{\alpha_1^\vee, \alpha_2^\vee,\dots, \alpha_d^\vee,K_1,K_2, \dots, K_N,d_1, d_2,\dots, d_N     \}$ with bilinear form defined by:
\begin{center}
    $<h,K_i>=<h,d_i>=0,$ for all $h \in \mf h$ and $1 \leq i \leq N.$\\
    $<d_i,d_j>=<K_i,K_j>=0$ and $<d_i,K_j>=\de_{ij}$ for $1\leq i,j \leq N.$
\end{center}
It is clear that dim $\mf {\wtil h}$ = dim $\mf{ \wtil h}^*=2N+d$. Let $\De^{re}=\{\alpha + \de_{\bf r}: \alpha\in \De , {\bf r} \in \Z^N\}$ and $\De^{im}=\{ \de_{\bf r}: {\bf r} \in \Z^N\}.$ Then $\wtil{\De}=\De^{re} \cup \De^{im}$ is the root system of $\til \tau$ with respect to the Cartan subalgebra $\til {\mf h}.$ \\
Let $\bar{\la}$ denote the restriction of $\la \in \wtil{\mf h}^*$ to $\mf h.$
Again any $\mu \in \mf h^* $ can be extend to $\wtil{\mf h}^*$ by defining $\mu(d_i)=\mu(K_i)=0$ for $1 \leq i \leq N.$ Therefore any $\la \in \wtil{\mf h}^*$ can be expressed, uniquely, as $\la = \bar{\la} + \dis{\sum_{i=1}^{N}g_i\de_i}+\dis{\sum_{i=1}^{N}s_i\omega_i},$ see \cite{RT} for reference.\\
We also define $\alpha_{d+j}=-\beta + \de_j$ for $1 \leq j \leq N$, where $\beta \in \De$ is the highest root $\mf g.$ It is easy to see that the set of vectors $\{\alpha_1, \dots, \alpha_d, \alpha_{d+1},\dots, \alpha_{d+N}, \omega_1,\dots,\omega_N\}$ forms another basis of $\wtil{\mf h}^*.$ For $\gamma=\alpha  + \delta_{\mbf m} \in \Delta^{re} $, define the corresponding coroot by
$$  \gamma^{\vee} = \alpha^{\vee} + \frac{2}{<\alpha, \alpha>}{\displaystyle \sum_{i=1}^{N}}m_iK_i. $$
For a real root $\gamma \in \Delta^{re},$ define a reflection $r_\gamma$ on $ {\wtil{\mf h}}^*$ by $$ r_{\gamma}(\lambda)= \lambda - \lambda(\gamma^{\vee}) \gamma , \hspace{.2cm}\lambda \in {\wtil{\mf h}}^* .$$
Let $ \wtil W $ be the Weyl group generated by the reflections corresponding to real roots of $\wtil \Delta $. 
\begin{definition}\label{d321}
	A module $V$ for $\til \tau$ (or over $\tau$ ) is said to be integrable if $V$ satisfies the following properties: 
 \begin{enumerate}
     \item $V $ can be decomposed as $ V=\displaystyle { \bigoplus_{\lambda \in {\til{\mf h}}^*}}{V_{\lambda}}$, where $ V_{\lambda} =\{v \in V: hv=\lambda(h)v , \, \forall \, h \in {\wtil{\mf h}} \}.$
	
	\item $X_{\alpha}({\mbf m})$ acts locally nilpotently on $V$ for all $\alpha \in \De, X_\alpha \in {\mf g}_\alpha, {\mbf m} \in \Z^N .$

 \end{enumerate}
\end{definition}
Let $V$ be an integrable module for $\til \tau $ (or over $\tau$) with finite dimensional weight spaces, i.e dim $V_\lambda < \infty$ for all $\la \in {\wtil{\mf h}}^*.$  Let $P(V)= \{ \la \in {\wtil{\mf h}}^*: V_\la \neq 0\}$ be denote the set of all weights of $V$. Then the following are very standard ( see \cite{RT} ).
\begin{lemma}\label{l2.1}

    (1) $P(V)$ is $\widetilde W$ invariant.\\
    (2) dim $V_\la =$ dim $V_{w\la}$ for all $w \in \widetilde W, \la \in P(V).$\\
    (3) If $\alpha \in \De^{re}, \la \in P(V)$ and $\la(\alpha^{\vee}) > 0,$ then $\la -\alpha \in P(V)$

\end{lemma}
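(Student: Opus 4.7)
The plan is to mimic the standard Kac--Moody integrability argument by attaching to each real root an $\mf{sl}_2$-triple inside $\til\tau$, integrating its action using local nilpotency, and then reading off reflections and root-string statements from finite-dimensional $\mf{sl}_2$-theory.

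First, fix a real root $\alpha = \beta + \de_{\bf m}$ with $\beta \in \De$ and ${\bf m} \in \Z^N$, and choose $X_\beta \in \mf{g}_\beta$, $X_{-\beta} \in \mf{g}_{-\beta}$ normalized so that $[X_\beta, X_{-\beta}] = \beta^\vee$ and $\langle X_\beta, X_{-\beta} \rangle = \tfrac{2}{\langle \beta,\beta\rangle}$. Using bracket A1 and the formula $K({\bf u}, {\bf r+s}) = \sum u_i t^{{\bf r+s}} K_i$, I would compute
\[
[X_\beta({\bf m}), X_{-\beta}(-{\bf m})] = \beta^\vee + \frac{2}{\langle \beta,\beta\rangle}\sum_{i=1}^N m_i K_i = \alpha^\vee,
\]
and $[\alpha^\vee, X_{\pm\beta}(\pm{\bf m})] = \pm 2\, X_{\pm\beta}(\pm{\bf m})$. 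Thus $\{X_\beta({\bf m}), X_{-\beta}(-{\bf m}), \alpha^\vee\}$ spans a copy $\mf{sl}_2^{\alpha}$ of $\mf{sl}_2$ inside $\til\tau$, and every weight space $V_\la$ is an $\alpha^\vee$-eigenspace of eigenvalue $\la(\alpha^\vee)$.

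Next, by Definition \ref{d321}(2) the elements $X_{\pm\beta}(\pm{\bf m})$ act locally nilpotently on $V$. Combined with the finite-dimensionality of the weight spaces and the $\Z$-grading by $\alpha^\vee$-eigenvalues, standard Kac--Moody theory (integration of $\mf{sl}_2$-triples) shows that $V$ decomposes as a direct sum of finite-dimensional $\mf{sl}_2^{\alpha}$-modules. Hence the operator
\[
s_\alpha = \exp(X_\beta({\bf m}))\,\exp(-X_{-\beta}(-{\bf m}))\,\exp(X_\beta({\bf m}))
\]
is a well-defined automorphism of $V$. A direct calculation on $\wtil{\mf h}$ (done once for an arbitrary $\mf{sl}_2$-triple) shows that conjugation by $s_\alpha$ acts on $\wtil{\mf h}^*$ as the reflection $r_\alpha$, so $s_\alpha$ restricts to a linear isomorphism $V_\la \to V_{r_\alpha \la}$. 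Iterating over all $\alpha \in \De^{re}$ generates the Weyl group $\Omega$ and yields (1) and (2) simultaneously.

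Finally, for (3), pick $0 \neq v \in V_\la$ with $\la(\alpha^\vee) = n > 0$ and decompose $v$ along the finite-dimensional $\mf{sl}_2^{\alpha}$-summands of $V$. In every such summand the weights form a string symmetric about $0$, so a vector of $\alpha^\vee$-eigenvalue $n > 0$ cannot be a lowest weight vector; consequently $X_{-\beta}(-{\bf m}) \cdot v \neq 0$. Since $X_{-\beta}(-{\bf m})$ shifts the $\wtil{\mf h}$-weight by $-\alpha$, this produces a nonzero element of $V_{\la - \alpha}$, proving $\la - \alpha \in P(V)$. The only non-routine step is the central correction in the $\mf{sl}_2$-triple computation, but this follows immediately from A1; everything else is routine transfer of the finite-dimensional $\mf{sl}_2$ picture to $\til\tau$.
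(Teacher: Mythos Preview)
Your argument is correct and is precisely the standard Kac--Moody proof; the paper does not supply its own proof of this lemma but simply labels it ``very standard'' and cites \cite{RT}, so there is nothing to compare against beyond noting that you have reproduced the expected $\mf{sl}_2$-triple argument. One small remark: the decomposition of $V$ into finite-dimensional $\mf{sl}_2^{\alpha}$-modules already follows from local nilpotency of $X_{\pm\beta}(\pm{\bf m})$ together with the semisimplicity of $\alpha^\vee$ (guaranteed by Definition~\ref{d321}(1)), so you do not actually need the finite-dimensionality of the $\wtil{\mf h}$-weight spaces at that step---but invoking it does no harm.
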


\begin{definition} Extended affine Lie algebra (EALA) : Let $L$ be a Lie algebra such that:\\
\begin{enumerate}
    \item[EA1.] $L$ is endowed with a non-degenerate symmetric bilinear form $(.,.)$ which is invariant (i.e $([x,y],z)=(x,[y,z]), $ for all $x,y,z \in L).$
    \item[EA2.] $L$ possesses a non-trivial finite dimensional self-centralizing ad-diagonalizable  abelian subalgebra $H$. 
\end{enumerate}
    To complete the definition of EALA, we need three more axioms. We observe some consequence of EA2 which will be useful to define other three axioms. Note that by property EA2 we have 
    \begin{center}
    $ L=\displaystyle { \bigoplus_{\alpha \in H^*}}{L_{\alpha}}$,
where  $L_{\alpha} =\{x \in L: [h,x]=\alpha(h)x , \forall h \in H \}.$
\end{center}
Let $R=\{\al \in H^*: L_\al  0 \},$ this set is called root system of $L$ with respect to $H$ (observe that this notion of root system is different from usual one, in the usual case root system do not contain $0$, for reference see \cite {AG}). Note that $0 \in R$ and $\al, \be \in R$ with $ \al + \be \neq 0 \implies (L_\al, L_\be) = 0.$ Hence we have $R =-R$. The form restricted to $H$ is non-degenerate, thus we can induce a form on $H^*$. Let
\begin{center}

$R^\times = \{ \al \in R: (\al , \al) \neq 0 \}  ,$ \hspace{.5cm } $ R^0 = \{ \al \in R: (\al , \al)=0 \} .$

\end{center}
The elements of $R^\times$( respectively $R^0)$ are called non-isotropic (respectively isotropic) roots. It is clear that $R=R^\times \cup R^0.$
\begin{enumerate}
    \item[EA3.] If $x_\al \in L_\al$ for $\al \in R^\times $, then $adx_\al$ act locally nilpotently on $L.$
    \item[EA4.] $R$ is a discrete subset of $H^*$.
    \item[EA5.] $R$ is an irreducible root system. This means that\\
    (i) If $R^\times =R_1\cup R_2$ and $(R_1,R_2)=0$, then either $R_1=0$ or $R_2=0$.      \\
    (ii) If $\sigma \in R^0$, then there exists a $\alpha \in R^\times$ such that $\sigma +\al \in R.$
\end{enumerate}
A Lie algebra satisfying EA1 to EA5 is called an EALA.\\
\subsection[]{Core of EALA }\label{co2.1}The core of the EALA $L$ is defined as the subalgebra generated by $\dis{\bigcup_{\al \in R^\times}}L_\al$. It should be mentioned that the core of $L$ is an ideal of $L$.\\ Extensive research has been done on EALAs and their representation theory. One can observe that the full toroidal Lie algebra is not an EALA as it does not satisfy (EA1). Now we discuss some known examples of EALAs. 
\end{definition}
\begin{example}
	Consider the Lie algebra  $\mf{g} \ot \mcal A \op \dis{\bigoplus_{1 \leq i \leq N}}\C K_i \op \dis{\bigoplus_{1 \leq i \leq N}}\C d_i $. Lie bracket is given by 
	\begin{center}
		 $[X({\mbf r}), Y({\mbf s})]=[X,Y]({\mbf{r+s}}) + \de_{\mbf{r+s},\mbf 0}<X,Y>\dis{\sum_{i=1}^{N}}\C K_i, $\\
		 $[K_i,K_j]=[d_i,d_j]=[K_i,d_j]=0$,\\
		 $[d_i,X({\mbf r})]=r_iX({\mbf r}),$\\
		 and $K_i$ are central, for all $X, Y \in \mf g, {\mbf r, \mbf s} \in \Z^N, 1 \leq i,j \leq N$.
	\end{center}
It is the minimal EALA coming from the extension of a multiloop algebra. Irreducible modules for this Lie algebra have been studied in \cite{SP}, also see \cite{R05}.
\end{example}
\begin{example}
	A certain subalgebra of $\til \tau$ forms an EALA. Define
	$S_N=span\{ D({\bf u, \bf r}):({\bf u| \bf r})=0, {\bf u} \in \C^N, {\bf r} \in \Z^N \},$ this is a subalgebra of $Der \mcal A$. It is known to be a simple Lie algebra of $S$ type. For represenation on
	$S_N$ one can see \cite{BT}.
	 Let 
	$$ \tau(S_N)=  \mf{g} \ot \mcal A \oplus \mcal Z \op S_N.$$
	Define a bilinear form on $\tau(S_N)$ by 
	\begin{center}

		$(X({\bf r}),Y({\bf s}))=<X,Y>\de_{\bf r+\bf s,0},$ for all $X,Y \in \mf g$, $\bf r,s$ \ $\in \Z^N$,\\
	$(D({\bf u, \bf r}),K({\bf v}, {\bf s}))=\de_{\bf r+ \bf s,0}({\bf u}|{\bf v})$ for all $\bf r, \bf s $ $\in \Z^N$, ${\bf u, \bf v}\in \C^N$,\\
	and all other values are zero.
		\end{center}
	It is a standard fact that $\tau(S_N)$ is an EALA with the above bilinear form (see \cite{N2} for general construction of EALAs). It is the largest EALA coming from the extension of a multiloop algebra. There is a analogous notion of twisted version of $\tau(S_N)$ and irreducible integrable module for these Lie algebras have been studied in \cite{RSB,TB,CLT1,CLT2}.
\end{example}
Before going to next example we fix the notation $J_m=   \begin{pmatrix}
	0 & I_{m \times m} \\
	-I_{m \times m} & 0 \\ 
\end{pmatrix} $
for all $m \in \N$, where $I_{m \times m}$ is the $m \times m$ identity matrix.
\begin{example}
	We now discuss another class of EALA subquotient of full toroidal algebras. First, we define
	Hamiltonian Lie algebras (cf. \cite{TH}). Let $N=2m$,  
	   $H_N =span\{D(J_m{\mbf r},{\mbf r}): {\mbf r} \in \Z^N \setminus \{\mbf 0 \} \} $ and $\wtil{H_N}=H_N \op \mcal D$. It is easy to verify that $\wtil{H_N}$ is a Lie algebra with respect to the following Lie bracket\\
	 \begin{center}

	 $[D(J_m{\mbf r},{\mbf r}),D(J_s{\mbf s},{\mbf s})]=(J_m{\mbf r}|{\mbf s})D(J_m({\mbf r +\mbf s}),{\mbf r +\bf s})$ and\\
	  $[D({\mbf u},\mbf 0),D(J_m{\mbf r},{\mbf r}) ]$=$({\mbf u| \mbf r})D(J_m{\mbf r},{\mbf r})$ for all ${\mbf r, \mbf s} \in \Z^N, {\mbf u} \in \C^N$.
	  \end{center}
  The Lie algebra $\wtil{H_N}$ is known as Hamiltonian Lie algebra and it is also called as $H$ type Lie algebra.\\
  Let $\mcal K=span\{ K({\mbf u, \mbf r}) \in \mcal Z: ({\mbf u}| J_m{\mbf r})=0, \, {\mbf u} \in \C^N, \, {\mbf r} \in \Z^N \setminus \{\mbf 0\} \}$. It is trivial to check by (\ref{a2.3}) that $[H_N,\mcal K] \subseteq \mcal K.$ Now consider the Lie algebra, $  \mf{g} \ot \mcal A \oplus \mcal Z \op \wtil{H_N}.$ Note that $\mcal K$ is an ideal in this Lie algebra. Define $\tau(H_N)=  \mf{g} \ot \mcal A \oplus  \mcal Z/\mcal K \op \wtil{H_N}.$ It is proven in \cite{RH} that $\tau(H_N)$ is an EALA, which is called a Hamiltonian EALA (HEALA). Irreducible integrable modules for HEALA have been studied in \cite{RH}.
\end{example}

\section{ Skew symmetric Extended Affine Lie algebras }
In this section we introduce a class of extended affine Lie algebras which we refer to as Skew symmetric extended affine Lie algebras (in short SSEALAs). This class generalizes hamiltonian EALAs described in Example 2.3. For general construction of EALAs one can see the reference \cite{N2}.\\
Let $B$ be a non zero $N\times N$ skew symmetric matrix over $\C$ i.e, $B^T=-B$. 
Set $$H_B=span\{D(B{\mbf r},{\mbf r}): {\mbf r} \in \Z^N \setminus \{\mbf 0\}\}\, \text{and} \,\,  \widetilde {H_B}=H_B \op \mcal D.$$
 It is easy to see that $H_B$ forms a subalgebra of $Der \mcal A$ with the Lie bracket   $$[D(B{\mbf r},{\mbf r}),D(B{\mbf s},{\mbf s})]=(B{\mbf r}|{\mbf s})D(B({\mbf r + \mbf s}), {\mbf r+ \mbf s}), \, \forall  \, {\mbf r, \mbf s } \in \Z^N \setminus \{\mbf 0\}.$$
 In particular, for $B=J_m$ the Lie algebra $H_{J_m}$ coincides with the Lie algebra $H_N$ constructed in Example 2.3.\\
For a skew symmetric matrix $B$ we associate a skew symmetric bilinear form on $\C^N$ by $({\mbf x, \mbf y})_B=(B{\mbf x|\mbf y})=({\mbf x}|B^T{\mbf y})=-({\mbf x}|B{\mbf y})$. We record some properties of $(\mbf x,\mbf y)_B$.
\begin{enumerate}
   \item [(3.1)] $({\mbf r},{\mbf r})_B=(B{\mbf r}|{\mbf r})=0$, for all ${\mbf r} \in \C^N$.
   \item [(3.2)] $({\mbf r},{\mbf s})_B=(B{\mbf r}|{\mbf s})=0$, for all ${\mbf r} \in \C^N,$ ${\mbf s} \in ker B$.
    \item[(3.3)] $({\mbf r},{\mbf s})_B=(B{\mbf r}|{\mbf s})=0$, for all ${\mbf r +\mbf s} \in kerB$. 
\end{enumerate}
Define $\mcal K_B=span\{ K({\mbf u, \mbf r}) \in \mcal Z: (B{\mbf u}|{\mbf r})=0, \, {\mbf u} \in \C^N, \, {\mbf r} \in \Z^N \setminus \{\mbf 0\} \}$. It is trivial to check that $[H_B,\mcal K_B] \subseteq \mcal K_B$  (see (2.3)). Now consider the Lie algebra, $  \mf{g} \ot \mcal A \oplus \mcal Z \op \wtil{H_B}.$ Note that $\mcal K_B$ is an ideal in this Lie algebra. Let us denote $\wtil {\mcal Z} = \mcal Z/\mcal K_B $.  Let us define SSEALA by $\tau_B=  \mf{g} \ot \mcal A \oplus \wtil {\mcal  Z} \op \wtil{H_B}.$ Set $G_B=ker B \cap \Z^N$.

\begin{proposition}\label{p3.1}

		(1)  dim $(\wtil Z)_{\mbf r}=0 $ if $\mbf 0 \neq {\mbf r} \in G_B$.\\
		(2)  dim $(\wtil Z)_{\mbf r}=1 $ if $ {\mbf r} \notin G_B$.\\
		(3) dim $(\wtil Z)_{\mbf 0}=N $.\\
		($4$)  dim $(\wtil H_B)_{\mbf r}=0 $ if $\mbf 0 \neq {\mbf r} \in G_B$.\\
		(5) dim $(\wtil H_B)_{\mbf r}=1 $ if $ {\mbf r} \notin G_B$.\\
		(6) dim $(\wtil H_B)_{\mbf 0}=N $.\\
		(7) $\tau_B$ is an EALA.\\
		(8) The core of $\tau_B$ is $\mf g \ot A \op \wtil Z$.\\
	
\end{proposition}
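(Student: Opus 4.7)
The plan is to address the eight parts in order. Parts (1)--(6) reduce to graded-dimension bookkeeping, part (7) follows the template of the HEALA construction from \cite{RH} with the matrix $J$ replaced by an arbitrary skew-symmetric $B$, and part (8) comes from tracing which subspaces are generated by the real root vectors.

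For (1)--(3), observe first that by construction $\widetilde Z_0=\operatorname{span}\{K_1,\dots,K_N\}$ is $N$-dimensional, settling (3). For $\mathbf{r}\neq 0$, since $B^T=-B$ the defining condition $(B\mathbf{u}|\mathbf{r})=0$ is equivalent to $\mathbf{u}\perp B\mathbf{r}$, so the $\mathbf{r}$-th graded piece of $K_B$ is $\{K(\mathbf{u},\mathbf{r}):\mathbf{u}\perp B\mathbf{r}\}$. If $\mathbf{r}\in G_B$ this is all of $\Omega_{A,\mathbf{r}}$, forcing $\widetilde Z_\mathbf{r}=0$ and giving (1). If $\mathbf{r}\notin G_B$, then $(K_B)_\mathbf{r}$ has codimension one in $\Omega_{A,\mathbf{r}}$, and by property 3.1 it already contains the relator $K(\mathbf{r},\mathbf{r})$, so $\widetilde Z_\mathbf{r}$ is one-dimensional, giving (2). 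For (4)--(6), the space $(H_B)_\mathbf{r}$ is spanned by the single vector $D(B\mathbf{r},\mathbf{r})$, which vanishes iff $\mathbf{r}\in G_B$; the only $N$-dimensional contribution is at degree zero, coming from $D$.

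For (7), I take the Cartan subalgebra to be $\widetilde{\mathfrak h}=\mathfrak h\oplus Z_0\oplus D$ (of dimension $d+2N$) and define the invariant form by $(X(\mathbf{r}),Y(\mathbf{s}))=\langle X,Y\rangle\delta_{\mathbf{r}+\mathbf{s},0}$ on the loop part, $(K_i,d_j)=\delta_{ij}$ on the pairing of $Z_0$ with $D$, and $(K(\mathbf{u},\mathbf{r}),D(B\mathbf{s},\mathbf{s}))=\delta_{\mathbf{r}+\mathbf{s},0}(B\mathbf{s}|\mathbf{u})$ between the remaining graded pieces, with all other values zero. Invariance is a direct computation against brackets A1, (2.3) and (2.4). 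Non-degeneracy reduces, thanks to (1)--(6), to a single one-dimensional pairing at each degree $\mathbf{r}\notin G_B$, and the pairing value $(B(-\mathbf{r})|\mathbf{u})$ is nonzero for a suitable representative $\mathbf{u}$ because $B\mathbf{r}\neq 0$. EA2 holds since $\widetilde{\mathfrak h}$ acts diagonally with weights $\alpha+\delta_\mathbf{r}$ and these weights separate points, EA3 is the standard loop-algebra local nilpotency coming from $\mathfrak g$ being finite-dimensional simple, and EA4 and EA5 follow since the root system coincides with the irreducible and discrete affine-toroidal root system $\widetilde\Delta$.

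For (8), the non-isotropic roots are exactly the real roots $\alpha+\delta_\mathbf{r}$ with $\alpha\in\Delta$, whose root spaces are $\mathbb C\,X_\alpha(\mathbf{r})$. Simplicity of $\mathfrak g$ together with axiom A1 shows that brackets of these generate all of $\mathfrak g\otimes A$, and the cocycle term $\langle X_\alpha,Y_{-\alpha}\rangle K(\mathbf{r},\mathbf{r}+\mathbf{s})$ appearing in $[X_\alpha(\mathbf{r}),Y_{-\alpha}(\mathbf{s})]$ recovers every residue class in $\widetilde Z$ as $\mathbf{r},\mathbf{s}$ vary. The main obstacle I anticipate is (7), specifically checking non-degeneracy of the form simultaneously on every graded slot and verifying self-centralization of $\widetilde{\mathfrak h}$ for EA2. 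Both reduce to bookkeeping once (1)--(6) are in place, but they must be executed carefully so that the form descends correctly to the quotient by $K_B$ and remains non-degenerate against $\widetilde{H_B}$ on each graded piece where both sides are nontrivial.
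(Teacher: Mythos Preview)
Your proposal is correct and follows essentially the same approach as the paper. The only notable difference is in (7): the paper obtains invariance of the form for free by observing that it is the restriction of the known invariant form on $\tau(S_N)$ (together with the check that $(D(B\mathbf{r},\mathbf{r})\mid K_B)=0$ so it descends to the quotient), whereas you propose to verify invariance directly against the defining brackets; both work, but the paper's shortcut saves some computation.
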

\begin{proof}
Note that $(1)$ and $(3)-(6)$ follows from definitions. To prove $(2)$, let ${\mbf r} \notin G_B$ and ${\mbf{u}, \mbf{v} }\in \C^N$ such that $(B{\mbf u}|\mbf r)\neq 0 $ and $(B{\mbf v}|\mbf r)\neq 0 $. Now consider
 $$(B(\frac{\mbf u}{(B\mbf u|\mbf r)}-\frac{\mbf v}{(B\mbf v|\mbf r)})|\mbf r)  =0. $$
 This implies that $K({\mbf u, \mbf r})=\la K({\mbf v, \mbf r})$ (mod $\mcal K_B$) for some non zero $\la \in \C$. Therefore $K(B\mbf r,\mbf r) + \mcal K_B$ can be taken as a basis element for $(\wtil Z)_{\mbf r}$ when ${\mbf r} \notin G_B$.\\ To prove $(7)$ we first
 observe that with respect to the Cartan subalgebra $\mf {\til h}$ root space decomposition of $\tau_B$ is given by $\tau_B= \til{\mf h} \op \dis{\bigoplus_{\be \in \til \De}} (\tau_B)_\be$, where 
 $$  (\tau_B)_\be = 
 \begin{cases}
 	\mf g_\al \ot t^{\mbf r}, \, \text{if} \, \, \be=\al+\de_{\mbf r}, \, \al \in \De, {\mbf r }\in \Z^N\\
 	\mf h \ot t^{\mbf r} \op (K(B{\mbf r} ,{\mbf r}) + \mcal K_B) \op  D(B{\mbf r} ,{\mbf r}), \, \text{if} \, \, \be=\de_{\mbf r}, \, \, {\mbf r }\notin G_B, \,  {\mbf r }\in \Z^N \\
 	\mf h \ot t^{\mbf r} , \, \text{if} \, \, \be=\de_{\mbf r}, \, \, {\mbf r }\in G_B.
 \end{cases} $$
Note that $\tau_B$ is a subquotient of full toroidal Lie algebra. So all the axioms of EALA can be easily verified for $\tau_B$ except  EA1. Define a bilinear form by

 \begin{center}
 	$(X({\mbf r}),Y({\mbf s}))=<X,Y>\de_{\mbf r+\mbf s,\mbf 0},$ for all $X,Y \in \mf g$, $\mbf r, \mbf s$ \ $\in \Z^N$,\\
 	$(h_{\mbf r},K(B{\mbf s}, {\mbf s}))=\de_{\mbf r+ \mbf s,\mbf 0}(B{\mbf r}|B{\mbf s})$ for all $\mbf r, \mbf s $ $\notin G_B$,\\
 	$(D{(\mbf u},\mbf 0),K{(\mbf v },\mbf 0))=(\mbf u| \mbf v)$ for all $\mbf u, \mbf v$ $\in \C^N$ and all other values are zero.
 	 \end{center} 
 Note that this form descends from the form of $\tau(S_N)$ after noting that $(D(B{\mbf r},{\mbf r})|K_B)=0$.	It is easy to verify that the form is invariant. Now to check that the form is non degenerate consider $\mbf r + \mbf s=\mbf 0$ and note that $(D({B\mbf r, \mbf r}),K(B{\mbf s}, {\mbf s}))=-(B{\mbf r},B{\mbf r})\neq 0$ for all ${\mbf r} \notin G_B$ and $(X({\mbf r}),Y({\mbf s}))=<X,Y>\neq 0$, for $X \in \mf g_\al$ and $Y \in \mf g_{-\al}$. (8) is standard to prove by definition of the core of EALA with the help of the root system as described above.  This completes the proof.

\end{proof}
\begin{remark}\label{r3.1}
	We call $\wtil{ H_B}$, which is naturally a subalgebra of $Der \mcal A$ (see Section 2), the derivation algebra of the SSEALA associated with $B$.
	
\end{remark}

\begin{example} \label{e3.1}
	
Consider the skew symmetric matrix $B=J_m$ as in Example 2.3. Then
the Lie algebra $\tau_{J_m}$ is a SSEALA which is the HEALA considered in \cite{RH}. Note that in this example the skew symmetric bilinear form corresponding to $J_m$ is non-degenerate and rank of $G_B$ is 0.

\end{example}
\begin{example}\label{e3.2}
 Consider the matrix 
	
	$$\bar J = \begin{pmatrix}
		J_m  &  & & 1 \\
		   &  & & \vdots \\
		   &  & &  1 \\
		-1 & \hdots &-1 & 0\\   
	\end{pmatrix} $$
The Lie algebra $\tau_{\bar J}$ is another example of SSEALA. Note that the skew symmetric bilinear form corresponding to $\bar J$ is degenerate and rank of $G_{\bar J}$ is 1. This SSEALA was constructed in \cite{RH} and named as KEALA.	
	 
\end{example}
 
\begin{lemma}\label{l3.1}
	Let $V$ be a finite dimensional vector space of dimension $N$. Let $\phi$ be a skew symmetric bilinear form on $V$. Let $B $ be the skew symmetric matrix corresponding to  $\phi$ with respect to some basis of $V$. Then\\
	(1) The rank of $B$ is even, say $2m$.\\
	(2) Let $k$ be the dimension of the radical. Then there exists a basis $v_1,v_2, \dots v_{N}$ of $V$ such that the corresponding matrix for  $\phi$ is 	$$B'=  \begin{pmatrix}
		J_{m} &0_{2m \times k} \\
		0_{k \times 2m} & 0_{k \times k} \\ 
	\end{pmatrix} .$$ 
In particular, $N=2m+k$.\\
(3) There exists $A \in GL(N, \C)$ such that $ABA^T=B'$. 

\end{lemma}
\begin{proof}
	For the proof of (1) and (2) see \cite{Sam P} Theorem 5-11, also see Lemma 1.1.5 of \cite{GW} for a refined proof for case $k=0$. Now from (1) and (2), it follows that there exists a $A \in GL(N, \C)$ such that $ABA^{-1}=B'$. It is known that $A$ can be chosen to be orthogonal, i.e $A^T=A^{-1}$ (One can see the Theorem 2.1 of \cite{DKK}, also see \cite{FRG} for the original proof). This completes the proof of (3). 
\end{proof}


\begin{proposition}\label{p3.2}
	Let $B$ be a skew symmetric matrix in $\mathfrak{gl}(N,\C)$ and $A \in GL(N,\Z)$ such that $B=A^TB'A$ for some $B'$. Then $\tau_B  \simeq \tau_{B'}$.
\end{proposition}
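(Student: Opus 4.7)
The plan is to realize the desired isomorphism as the restriction of the $GL(N,\Z)$-automorphism $\Phi_A$ of the full toroidal Lie algebra $\til\tau$ defined in (\ref{a2.5}). Since $A \in GL(N,\Z)$ by hypothesis, $\Phi_A$ is already known to be an automorphism of $\til\tau$, so it suffices to verify that it restricts to a bijection between the subspaces (and quotient) defining $\tau_B$ and the corresponding ones defining $\tau_{B'}$.

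The summand $\mf g \ot A$ is preserved because ${\bf r} \mapsto A{\bf r}$ is a bijection of $\Z^N$, and the degree subalgebra $D$ is preserved since $\Phi_A$ sends $d_i = D(e_i,0)$ to $D((A^T)^{-1}e_i, 0) \in D$. The essential check is on $H_B$: applying $\Phi_A$ to a spanning vector $D(B{\bf r}, {\bf r})$ gives $D((A^T)^{-1}B{\bf r},\, A{\bf r})$, and the hypothesis $B = A^T B' A$ rearranges to $(A^T)^{-1}B = B' A$. Hence
$$\Phi_A.D(B{\bf r},{\bf r}) = D(B'(A{\bf r}),\, A{\bf r}) \in H_{B'},$$
and the equality $\Phi_A(H_B)=H_{B'}$ follows from the bijection ${\bf r}\mapsto A{\bf r}$ of $\Z^N$.

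For the central piece, I would verify $\Phi_A(K_B) = K_{B'}$. Since $\Phi_A.K({\bf u},{\bf r}) = K(A{\bf u}, A{\bf r})$, this reduces to showing that the defining condition $(B{\bf u}|{\bf r}) = 0$ of $K_B$ transforms into $(B'(A{\bf u})|A{\bf r}) = 0$; a one-line bilinear-form calculation using $B^T = A^T (B')^T A$ settles this. Consequently $\Phi_A$ descends to a linear isomorphism $Z/K_B \to Z/K_{B'}$, and together with its action on the span of the $K_i$'s (which permutes them via $A$) yields an isomorphism of the central piece $\wtil Z$ of $\tau_B$ onto that of $\tau_{B'}$.

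Collecting these restrictions gives a Lie algebra isomorphism $\tau_B \to \tau_{B'}$. I do not expect any serious obstacle: the integrality of $A$ is exactly what allows $\Phi_A$ to permute the $\Z^N$-graded pieces (so this hypothesis cannot be relaxed, matching the remark that irrational or complex $A$ produce no such isomorphism), and the matrix identity $B = A^T B' A$ is invoked only once, in each of the two key computations on $H_B$ and on $K_B$. The rest is routine bookkeeping: checking that the Lie brackets transport correctly follows automatically from $\Phi_A$ being an automorphism of the ambient $\til\tau$.
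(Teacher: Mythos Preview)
Your proposal is correct and follows exactly the same approach as the paper: use the automorphism $\Phi_A$ of the full toroidal Lie algebra, compute $\Phi_A(D(B{\bf r},{\bf r}))=D(B'A{\bf r},A{\bf r})$ via the identity $(A^T)^{-1}B=B'A$, and check $(B'A{\bf u}\mid A{\bf r})=(B{\bf u}\mid {\bf r})$ so that $K_B$ is carried to $K_{B'}$. Your write-up is slightly more detailed (you explicitly note the preservation of $\mf g\ot A$ and $D$ and the descent to the quotient $Z/K_B$), but there is no substantive difference in the argument.
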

\begin{proof}
	Consider the automorphism of full toroidal Lie algebra $\til \tau$ given by $\Phi_A$ (\ref{a2.5}) corresponding to the matrix $A$. Note that $\Phi_A(D(B{\mbf r},{\mbf r}))=D((A^T)^{-1})B{\mbf r},A{\mbf r})=D(B'A{\mbf r},A{\mbf r}). $ Since $A$ is an invertible linear operator from the above we have $\Phi_A(H_B)=H_{B'}.$ Also observe that $\Phi_A(K({\mbf u},{\mbf r}))=(K(A{\mbf u},A{\mbf r}))  $ and $(B'A{\mbf u}|A{\mbf r})= ((A^T)^{-1}B{\mbf u}|A{\mbf r})= (A^T(A^T)^{-1}B{\mbf u}|{\mbf r})= (B{\mbf u}|{\mbf r}).$ Hence $\Phi_A$ maps $\mcal K_B$ into $\mcal K_{B'}$. Therefore $\Phi_A$ induces an isomorphism between $\tau_B$ and $\tau_{B'}$.
\end{proof}
Suppose $B=A^TB'A$ for some $A,B,B'$ as in Lemma \ref{l3.1}. Note that $A^TB'A$ is obtained from $B'$ by simultaneous row and column operations. Now in the case when $B'= \begin{pmatrix}
		J_m &0_{2m,1} \\
		0_{1, 2m} & 0 \\ 
	\end{pmatrix} = J' (say)$ and $B=\bar J$ the simultaneous row and column operations induces only integer matrices with entries in the set $\{1,0,-1\}$. Now instead of providing the elementary row and column operations method we just provide the matrix $A$, which is given by 
	$$ \begin{pmatrix}
		1 & 0 & 0 \hdots & 0  & 0 \hdots & 0 &0\\
		0 & I_{m-1} & &   \vdots &  -I_{m-1}  &         &  0   \\
		\vdots &  & &   & &       &    \vdots      \\
		0 & \hdots &0 & 0 & 0&\hdots 0 &     1     \\  
	   -1 &   0_{m-1}       &  & 0  & I_{m-1} &         &    0       \\
   \vdots &          &  & \vdots  &   &       & \vdots                   \\ 
         
		-1 & -1 &\hdots & 1 & \hdots & 1 & -1   &                       \\

	\end{pmatrix}_. $$
Now it is easy to check that $A\bar JA^T=J'$ and $A \in GL(2m+1,\Z)$. Therefore with the help of Proposition \ref{p3.2} we have  $\tau_{J'} \simeq \tau_{\bar J}$.

\section{Classification of irreducible modules for $\wtil{H_B} \ltimes \mcal A$.}
For the rest of the paper we consider $N=2m$ (except for section 6 where we assume $N=2m+1$)  and $B$ as a non-degenerate skew symmetric matrix. Consider the action of $\wtil{H_B}$ on $\mcal A$ as
\begin{align}\label{a4.1}
	D(B{\mbf r},\mbf r).t^{\mbf s}=(B{\mbf r| \mbf  s})t^{\mbf r+s}\\
	D({\mbf u},\mbf 0).t^{\mbf s}=({\mbf u|\mbf  s})t^{\mbf s},
\end{align} 
for all ${\mbf r,\mbf s} \in \Z^N$ and $\mbf u$ $ \in \C^N$.
This action defines a Lie algebra structure on $\wtil{H_B} \ltimes \mcal A$. In this section we classify irreducible modules for $\wtil{H_B} \ltimes \mcal A$ with finite dimensional weight spaces and use this classification to classify modules for SSEALA $\tau_B$.\\
The following concept can be found in \cite{GSR}. Suppose $\mcal V$ is a Lie algebra acting on a commutative associative unital algebra $\mcal B$ by derivation.  

\begin{definition}\label{d4.1}
	A $\mcal{BV}$ module $M$ is a vector space with actions of $\mcal B$ and $\mcal V$ which are compatible via the Leibniz rule 
	$$ \eta(fm)=\eta(f)m+ f (\eta m),  $$
	$$ fg(m)=f(g(m))  , \,\, 1.m=m ,$$
	for all $\eta \in \mcal V, \, f ,g\in \mcal B, \, m \in M$ and $1$ unit in $\mcal B$.\\
Equivalently, a $\mcal {BV}$ module structure may be expressed via smash product $\mcal B \# U(\mcal V)$, where the universal enveloping algebra $U(\mcal V)$ viewed as Hopf algebra with co-product $\Delta(u)=\sum u_1 \ot u_2$. The smash product is the associative algebra structure on a vector space $\mcal B \ot U(\mcal V)$ with the product
$$(f \ot u) (g \ot v) =\sum{fu_1(g) \ot u_2(v)},  $$ for all $f,g \in \mcal B, \, u,v \in U(\mcal V)$. The $\mcal {BV}$ module structure is equivalent to the action of associative algebra $\mcal B \ot U(\mcal V)$. 
\end{definition}
\begin{remark}
In accordance with the current literature in this paper a $\mcal{BV}$ module is called a jet module for $\mcal V$, when $\mcal B= \mcal A$. 
	
\end{remark}


Let $V$ be an irreducible jet module for $\wtil{H_B} $ with finite dimensional weight spaces with respect to $\mcal D \oplus \C$.
 Choose a weight $\lambda \in (\mcal D \oplus \C)^*$ such that $V_\lambda \neq 0$. Then due to irreducibility of $V$ we have $V=\displaystyle{\bigoplus_{\mbf m \in \mathbb{Z}^N}}V_{\mbf m},$ where $V_{\mbf m}= \{ v \in V: D(\mbf u,\mbf 0)v=(\mbf u|\mbf m+\bs \alpha)v $ for all $\mbf u \in \mathbb{C}^N \}$ and $\bs \alpha=(\lambda(d_1),\lambda(d_2),....,\lambda(d_N)\in \mathbb{C}^N.$ Let $U$ denote the universal enveloping algebra of $\wtil {H_B}\ltimes \mcal A.$ Let $L$ be the two sided ideal of $U$ generated by $t^{\mbf r}t^{\mbf s}-t^{\mbf r+\mbf s}$ and $t^{\mbf 0}-1$. Consider $U'=U/L$ an associative algebra. Note that $U'$ is a $\Z^N$ graded algebra, in fact $U'= \dis{\bigoplus_{{\mbf r}\in \Z^N}}U'_{\mbf r}, $ where $U'_{{\mbf r}}= \{X \in U':[D({\mbf u}, \mbf 0),X]=({\mbf u| \mbf r}) X, \forall \, {\mbf u} \in \C^N \}$. Obviously $U'_\mbf 0$ is an associative algebra.\\

Let $T({\mbf r})=t^{\mbf {-r}}D({B\mbf r, \mbf r})-D(B{\mbf r},\mbf 0)$, for all $\mbf r \in \Z^N$ and $\mcal T_B=span \{ T({\mbf r}): {\mbf r} \in \Z^N\}$. Then it is easy to check that $[T({\mbf r}),T({\mbf s})]=(B{\mbf r}|{\mbf s})[T({\mbf r +\mbf s})-T({\mbf r })-T({\bf s})].$ In particular, $\mcal T_B$ is a Lie subalgebra of $U'$
with respect to the commutator bracket.
\begin{theorem}\label{t4.1}
	
	(1) Let $V$ be an irreducible jet module for $\wtil{H_B} $ with finite dimensional weight spaces. Then $V \simeq V_\mbf 0 \otimes \mcal A$, where $V_\mbf 0$ is an irreducible module for $\mcal T_B$.\\
	(2) Let $V_\mbf 0$ be a finite dimensional irreducible module for $\mcal T_B$. Then $V_\mbf 0$ is an irreducible module for $\mf{sp}_{2m} \op R$ and $R$ is central. Conversely any finite dimensional irreducible module for $\mf {sp}_{2m} \op R$ can be extended to an irreducible module for $\mcal T_B$.

\end{theorem}
The aim of rest of this section is to prove Theorem \ref{t4.1}. We start with the following Lemma.

\begin{lemma}\label{l4.1}
	$U(\mcal T_B)=U'_\mbf 0$.
\end{lemma}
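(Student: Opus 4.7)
The plan is to establish the two inclusions $U(T)\subseteq U'_0$ and $U'_0\subseteq U(T)$ inside $U'$. The forward inclusion is immediate: each generator $T(\mathbf{r})=t^{-\mathbf{r}}h_{\mathbf{r}}-D(B\mathbf{r},0)$ is a sum of two elements of $\mathbb{Z}^N$-degree $\mathbf{0}$ (the factors $t^{-\mathbf{r}}$ and $h_{\mathbf{r}}$ carry opposite degrees, while $D(B\mathbf{r},0)$ is manifestly of degree $\mathbf{0}$), so $T\subseteq U'_0$; since $U'_0$ is an associative subalgebra of $U'$, the subalgebra $U(T)$ lies in $U'_0$.

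For the reverse inclusion, I would apply PBW to $\widetilde{H_B}\ltimes A$ and use the relations $t^{\mathbf{r}}t^{\mathbf{s}}=t^{\mathbf{r+s}}$, $t^{\mathbf{0}}=1$ in $U'$ to obtain the spanning set
\[
t^{-\mathbf{r}_1-\cdots-\mathbf{r}_k}\,h_{\mathbf{r}_1}\cdots h_{\mathbf{r}_k}\,D(\mathbf{u}_1,0)\cdots D(\mathbf{u}_l,0)
\]
for $U'_0$, indexed by $k,l\geq 0$ and choices of $\mathbf{r}_i\in\mathbb{Z}^N$ and $\mathbf{u}_j\in\mathbb{C}^N$. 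I would then show each such monomial lies in $U(T)$ by induction on the pair $(k,l)$. The two key algebraic tools are the rearranged identity $t^{-\mathbf{r}}h_{\mathbf{r}}=T(\mathbf{r})+D(B\mathbf{r},0)$, which replaces each ``$th$-pair'' by a $T$-factor plus a pure $D$-correction, and the bracket $[T(\mathbf{r}),T(\mathbf{s})]=(B\mathbf{r}|\mathbf{s})\bigl(T(\mathbf{r+s})-T(\mathbf{r})-T(\mathbf{s})\bigr)$, which stays inside $T$ and hence inside $U(T)$; thus commuting $T(\mathbf{r})$-factors past other factors only produces lower-complexity terms that are absorbed by the inductive hypothesis.

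The main obstacle is the base case $k=0$: producing $D(\mathbf{u},0)\in U(T)$ for every $\mathbf{u}\in\mathbb{C}^N$. Non-degeneracy of $B$ gives that $\{B\mathbf{r}:\mathbf{r}\in\mathbb{Z}^N\}$ $\mathbb{C}$-spans $\mathbb{C}^N$, so it is enough to get $D(B\mathbf{r},0)$ for each $\mathbf{r}\neq\mathbf{0}$. The difficulty is that $T(\mathbf{r})$ couples $t^{-\mathbf{r}}h_{\mathbf{r}}$ and $D(B\mathbf{r},0)$ in one inseparable linear combination, so no single $\mathbb{C}$-linear combination of the $T(\mathbf{s})$'s isolates $D(B\mathbf{r},0)$. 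To decouple them, I would form higher products such as $T(\mathbf{r})T(\mathbf{s})$ and $T(\mathbf{r})T(-\mathbf{r})$, and compare them against $T(\mathbf{r+s})$ or $T(n\mathbf{r})$, exploiting the differing scaling behaviors of the $h$-piece (where $h_{n\mathbf{r}}$ is not a scalar multiple of $h_{\mathbf{r}}$, as the $t$-degrees differ) and the $D$-piece (where $D(B(n\mathbf{r}),0)=nD(B\mathbf{r},0)$ by linearity) to solve polynomially for $D(B\mathbf{r},0)$ alone. Once this base case is secured, the induction on $k$ proceeds cleanly via the first identity, finishing the proof.
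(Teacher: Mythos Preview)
Your treatment of the forward inclusion and of the $k$--$h$ monomials in the reverse inclusion is in the spirit of the paper's proof, which simply writes a degree-zero PBW monomial as $k(-\mathbf{r}_1)\cdots k(-\mathbf{r}_l)h_{\mathbf{s}_1}\cdots h_{\mathbf{s}_k}$ with $\sum\mathbf{r}_i=\sum\mathbf{s}_j$, uses $[h_{\mathbf{r}},k(\mathbf{s})]=(B\mathbf{r}|\mathbf{s})k(\mathbf{r+s})$ together with $k(\mathbf{r})k(\mathbf{s})=k(\mathbf{r+s})$ to rearrange into products of blocks $k(-\mathbf{r}_i)h_{\mathbf{r}_i}$, and declares the result to lie in $U(T)$. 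The paper's spanning set \emph{omits} the $D(\mathbf{u},0)$-factors entirely; you were right to notice that they should in principle appear.

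However, the resolution you propose for the base case $k=0$ cannot succeed: $D(\mathbf{u},0)$ does \emph{not} lie in $U(T)$ for $\mathbf{u}\neq 0$. Take the trivial one-dimensional $T$-module $V_0=\mathbb{C}$ and form the $U'$-module $V_0\otimes A$ as in (\ref{a4.3}), letting $d_i$ act on $v\otimes t^{\mathbf{k}}$ by $k_i+\beta_i$. On $V_0\otimes t^{\mathbf{0}}$ every $T(\mathbf{r})$ then acts as $0$, so any element of $U(T)$ acts by its constant term, independently of $\boldsymbol{\beta}$; but $D(\mathbf{u},0)$ acts as $(\mathbf{u}\,|\,\boldsymbol{\beta})$, which varies with $\boldsymbol{\beta}$. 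Hence no expression in the $T(\mathbf{r})$'s can equal $D(\mathbf{u},0)$, and your scheme of forming products $T(\mathbf{r})T(\mathbf{s})$, $T(\mathbf{r})T(-\mathbf{r})$ and comparing scalings to isolate $D(B\mathbf{r},0)$ is doomed from the start. In effect the lemma holds for $H_B\ltimes A$ rather than $\widetilde{H_B}\ltimes A$, and this is what the paper's argument actually proves; it suffices for the sole application (Lemma~\ref{l4.2}), since each $d_i$ acts as a scalar on the weight space $V_{\mathbf{r}}$ and so does not affect irreducibility.
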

\begin{proof}
 Note that $U(\mcal T_B) \subseteq U_\mbf 0'$ is obvious.	Let $X \in U_\mbf 0'$. Then by PBW theorem we have $X$ is a linear combination of monomials $t^{-\mbf r_1}\dots t^{-{\mbf r_l}}D(B{\mbf s_1}, \mbf s_1) \dots D(B{\mbf s_k}, \mbf s_k)$, where ${\mbf r_i, \mbf s_i} \in \Z^N$, $l,k \in \N$ and $\dis{\sum_{i}}{\mbf r_i}= \dis{\sum_{i} }{\mbf s_i}$. Now using the fact $[D(B{\mbf r}, \mbf r),t^{\mbf s}]=(B{\mbf r}|{\mbf s})t^{\mbf r +s}$ and $t^{\mbf r}t^{\mbf s}=t^{\mbf r+ \mbf s},$ we see that each monomial can be written as sum of monomials of the form $t^{-\mbf r_1}\dots t^{-{\mbf r_l}}D(B{\mbf r_1}, \mbf r_1) \dots D(B{\mbf r_l}, \mbf r_l)$. Hence $X \in U(\mcal T_B)$.
		
	\end{proof}
\begin{lemma}\label{l4.2}

		(1) Each $V_{\mbf r}$ is an irreducible representation of  $\mcal T_B$. \\
		(2) $V_{\mbf r} \simeq V_{\mbf s}$ as a $\mcal T_B$-module.\\
		(3) $t^{\mbf s}.V_{\mbf r}=V_{\mbf r + \mbf s}$.
	
\end{lemma}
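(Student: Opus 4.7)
The plan is to prove the three parts in the order (3), (2), (1), with the engine being a single commutation relation $[T(\mathbf{p}), k(\mathbf{s})] = 0$ inside $U'$. Part (3) comes first and is the easiest: since $k(\mathbf{s}) = t^{\mathbf{s}}$ carries $\C^N$-weight $\mathbf{s}$ under the $D$-action (by (4.2)), one has $k(\mathbf{s}) V_{\mathbf{r}} \subseteq V_{\mathbf{r} + \mathbf{s}}$, and the defining relations of $U' = U/L$ force $k(\mathbf{s}) k(-\mathbf{s}) = k(\mathbf{0}) = 1$, so $k(\mathbf{s})$ acts invertibly on $V$ with inverse $k(-\mathbf{s})$. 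This upgrades the inclusion to the claimed equality and shows that each $k(\mathbf{s})$ restricts to a linear isomorphism $V_{\mathbf{r}} \to V_{\mathbf{r} + \mathbf{s}}$.

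For (2), I would compute the key bracket directly in $U'$. From (4.1), $[h_{\mathbf{p}}, k(\mathbf{s})] = (B\mathbf{p} \mid \mathbf{s})\, k(\mathbf{p} + \mathbf{s})$, and the $k$'s commute, so
\[
[k(-\mathbf{p}) h_{\mathbf{p}},\, k(\mathbf{s})] = k(-\mathbf{p})(B\mathbf{p} \mid \mathbf{s})\, k(\mathbf{p} + \mathbf{s}) = (B\mathbf{p} \mid \mathbf{s})\, k(\mathbf{s}).
\]
On the other hand $[D(B\mathbf{p}, 0), k(\mathbf{s})] = (B\mathbf{p} \mid \mathbf{s})\, k(\mathbf{s})$ as well, so subtracting gives $[T(\mathbf{p}), k(\mathbf{s})] = 0$. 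Combined with (3), this shows $k(\mathbf{s}) : V_{\mathbf{r}} \to V_{\mathbf{r} + \mathbf{s}}$ is a $T$-module isomorphism.

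For (1), take any non-zero $T$-submodule $W \subseteq V_{\mathbf{r}}$ and form $W' := \sum_{\mathbf{s} \in \Z^N} k(\mathbf{s}) W$. By (3) each summand $k(\mathbf{s}) W$ lies in the distinct weight space $V_{\mathbf{r} + \mathbf{s}}$, so the sum is automatically direct, and each summand is non-zero but strictly contained in $V_{\mathbf{r} + \mathbf{s}}$ as soon as $W \subsetneq V_{\mathbf{r}}$. To finish I need $W'$ to be stable under $\wtil{H_B} \ltimes A$: stability under $A$ is immediate from $k(\mathbf{p}) k(\mathbf{s}) W = k(\mathbf{p} + \mathbf{s}) W$; stability under $T$ is the commutation from the previous paragraph; stability under $D$ is automatic since $D(\mathbf{u}, 0)$ acts as the scalar $(\mathbf{u} \mid \mathbf{s} + \alpha)$ on $V_{\mathbf{s}}$; and for $h_{\mathbf{p}}$, I would rewrite $h_{\mathbf{p}} = k(\mathbf{p})(T(\mathbf{p}) + D(B\mathbf{p}, 0))$ in $U'$ (using $k(\mathbf{p}) k(-\mathbf{p}) = 1$ applied to $T(\mathbf{p}) = k(-\mathbf{p}) h_{\mathbf{p}} - D(B\mathbf{p}, 0)$), so that its action on $W'$ reduces to a scalar plus $T(\mathbf{p})$ followed by translation by $k(\mathbf{p})$. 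Irreducibility of $V$ then forces $W' = V$, and intersecting with $V_{\mathbf{r}}$ gives $W = V_{\mathbf{r}}$.

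The only genuinely delicate point is ensuring that all the bracket and inverse manipulations really live in $U'$ rather than $U$; this is exactly what the passage to $U' = U/L$ (and the associativity of the $A$-action on jet modules from Definition \ref{d4.1}) was designed to legitimize, so once one is careful about working modulo $L$, no serious obstacle appears.
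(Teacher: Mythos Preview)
Your proof is correct. Parts (2) and (3) match the paper's argument almost verbatim: the paper also observes $k(\mathbf{s})k(-\mathbf{s})=1$ for invertibility, and uses the commutation $[T(\mathbf{k}),k(\mathbf{s})]=0$ (stated without the computation you supply) to deduce that $f(v)=k(\mathbf{s}-\mathbf{r})v$ is a $T$-module isomorphism.

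For part (1) your route diverges from the paper's. The paper invokes Lemma~\ref{l4.1}, which asserts $U(T)=U_0'$: given $u,v\in V_{\mathbf r}$, irreducibility of $V$ provides $X\in U'$ with $Xu=v$, and a weight argument forces $X\in U_0'=U(T)$, so $V_{\mathbf r}$ is $T$-irreducible in one line. You instead manufacture a candidate proper $\wtil{H_B}\ltimes A$-submodule $W'=\bigoplus_{\mathbf s} k(\mathbf s)W$ and check stability by hand via the identity $h_{\mathbf p}=k(\mathbf p)\bigl(T(\mathbf p)+D(B\mathbf p,0)\bigr)$. Both arguments are sound; the paper's is shorter because Lemma~\ref{l4.1} has already absorbed the work of decomposing a general element of $U_0'$ into $T$-monomials, whereas your version avoids that PBW manipulation entirely at the cost of verifying stability under each generator separately. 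Your approach has the mild advantage of being self-contained (it does not rely on Lemma~\ref{l4.1}), while the paper's approach makes the conceptual point that $T$ is exactly the degree-zero part of the enveloping algebra acting on $V$.
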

\begin{proof}
	Let $u,v \in V_{\mbf r}$. Then by irreducibility of $V$ and weight arguments we find $X \in U_\mbf 0'$ such that $X.u=v$. Hence $(1)$ follows by Lemma \ref{l4.1}. Note that $t^{\mbf s}t^{-{\mbf s}}=1$ implies $t^{\mbf s}$ is invertible, this proves $(3)$. To prove $(2)$ define a map $f:V_{\mbf r} \to V_{\mbf s}  $ given by $ f(v)=t^{\mbf s-\mbf r}v$. Now \\
	$$f(T({\mbf k})v)=t^{\mbf s- \mbf r}T({\mbf k})v=T({\mbf k})t^{\mbf s- \mbf r}v=T({\mbf k})f(v),$$
	since $t^{\mbf r}$ commutes with $T({\mbf k})$. Hence $f$ is a $\mcal T_B$ module map. It is easy to see that $f$ is both injective and surjective.

\end{proof}
Now it is clear that, $V \simeq V_\mbf 0 \otimes \mcal A$, where $V_\mbf 0$ is a finite dimensional irreducible $\mcal T_B$-module. Conversely given any finite dimensional irreducible $\mcal T_B$-module $V_\mbf 0$ we can define a $\wtil {H_B} \ltimes \mcal A$-module structure on $V_\mbf 0 \otimes \mcal A$. Let ${\boldsymbol{\be}} \in \C^N$ and define \begin{align}\label{a4.3}
	 D(B{\mbf r},{\mbf r}).v \ot t^{\mbf k}=(B{\mbf r}|{\mbf k + \bs{\be}} )v\ot t^{\mbf k+ \mbf r}+(T({\mbf r})v)\ot t^{\mbf k+ \mbf r}. \\
	 t^{\mbf r}.v \ot t^{\mbf k}=v\ot t^{\mbf k+\mbf r}.
\end{align}

Now we check that (4.3) and (4.4) defines an $\wtil {H_B} \ltimes \mcal A$-module structure on $V_\mbf 0 \ot \mcal A$. It is easy to check that $[D(B{\mbf r},{\mbf r}),t^{\mbf s}](v\ot t^{\mbf k})=(B{\mbf r}|{\mbf s})t^{\mbf r+ \mbf s}(v\ot t^{\mbf k}).$ We check that  $[D(B{\mbf r},{\mbf r}),D(B{\mbf s},{\mbf s})](v\ot t^{\mbf k})=(B{\mbf r}|{\mbf s})D(B({\mbf r + \mbf s}), {\mbf r+ \mbf s})(v \ot t^{\bf k}).$ \\

Consider $ [D(B{\mbf r},{\mbf r})D(B{\mbf s},{\mbf s}) -D(B{\mbf s},{\mbf s}) D(B{\mbf r},{\mbf r})].v\ot t^{\bf k}$\\

$=(B{\mbf s}|{\mbf k+\bs \be})\{ (B{\mbf r}|{\mbf k+\mbf s+\bs \be})v  +T({\mbf r})v \}\ot t^{\mbf k+\mbf  r+ \mbf s} +\{ (B{\mbf r}|{\mbf k+\mbf s+\bs \be})T({\mbf s})v+T({\mbf r})T({\mbf s})v \}\ot t^{\mbf k+\mbf r+ \mbf s} $ $-(B{\mbf r}|{\mbf k+\bs \be})\{ (B{\mbf s}|{\mbf k+ \mbf r+\bs \be})v  +T({\mbf s})v \}\ot t^{\mbf k+\mbf r+\mbf s} -\{ (B{\mbf s}|{\mbf k+ \mbf r+\bs \be})T({\mbf r})v+T({\mbf s})T({\mbf r})v \}\ot t^{\mbf k+ \mbf r+ \mbf s} $ \\

$=\{(B{\mbf s}|{\mbf k + \bs \be}) (B{\mbf r}|{\mbf k+\mbf s+\bs \be})-(B{\mbf r}|{\mbf k + \bs \be})(B{\mbf s}|{\mbf k+\mbf r +\bs \be})\}v\ot t^{\mbf k+\mbf s+\mbf r} +(B{\mbf r}|{\mbf s})T({\mbf s})v \ot t^{\mbf k+\mbf s+\mbf r} -(B{\mbf s}|{\mbf r})T({\mbf r})v \ot t^{\mbf k+\mbf s+\mbf r} +[T({\mbf r}),T({\mbf s})]v \ot t^{\bf k+r+s}$\\

$=\{(B{\mbf s}|{\mbf k + \bs \be}) (B{\mbf r}|{\mbf s})-(B{\mbf r}|{\mbf k + \bs \be})(B{\mbf s}|{\mbf r })\}v\ot t^{\mbf k+\mbf s+\mbf r} +(B{\mbf r}|{\mbf s})\{T({\mbf s})+T({\mbf r})\}v \ot t^{\mbf k+\mbf s+\mbf r} +[T({\mbf r}),T({\mbf s})]v \ot t^{\bf k+\mbf r+\mbf  s}$\\

$=(B{\mbf r}|{\mbf s})(B({\mbf r+\mbf s})|{\mbf k +\bs \be})v \ot t^{\mbf k+\mbf r+\mbf s} +(B{\mbf r}|{\mbf s})T({\mbf r+\mbf  s})v \ot t^{\mbf k+\mbf r+\mbf s} $\\

$=(B{\mbf r}|{\mbf s})D(B({\mbf r + \mbf s}), {\mbf r+ \mbf s})v \ot t^{\mbf k}.$\\

Therefore to classify irreducible modules for $\wtil 
{H_B} \ltimes \mcal A$ with finite dimensional weight spaces, it is sufficient to classify finite dimensional irreducible $\mcal T_B$-modules. For this we proceed similarly to \cite{R04}. For $q \in \N$, define 
 $$T_q({\mbf s, \mbf r_1,\dots, \mbf r_q})= T({\mbf s})+\dis{\sum_{1 \leq k \leq q} \, \, \sum_{1 \leq i_1 < \cdots <i_k \leq q}}(-1)^k (  T({\mbf s+\mbf r_{i_1}+\dots+\mbf r_{i_k}}).$$ In the following lemma we collect properties of $T_q({\mbf s, \mbf r_1,\dots, \mbf r_q})$.

\begin{lemma}\label{l4.4}
	
		(1) $T_q({\mbf s, \mbf r_1,\dots, \mbf r_q})=T_q({\mbf s, \mbf r_{\sigma(1)},\dots,  \mbf r_{\sigma(q)}}), $ for any permutations $\sigma$ on $q$ letters.\\
		(2) $T_q({\mbf s, \mbf r_1,\dots, \mbf r_q})= T_{q-1}({\mbf s, \mbf r_1,\dots,\hat{\mbf r}_j ,\dots, \mbf r_q})-T_{q-1}({\mbf s + \mbf r_j, \mbf r_1,\dots, \hat{\mbf r}_j, \dots, \mbf r_q})$, for all $1 \leq j \leq q$,  $\widehat .$ means omission.  \\
		(3) $T_q({\mbf s, \mbf r_1,\dots, \mbf r_q})=0$ if $\mbf r_j=\mbf 0$ for some $1\leq j \leq q$.\\
		($4$) $[T({\mbf r}),T_q({\mbf s, \mbf r_1,\dots, \mbf r_q}) ]
		= -(B{\mbf r}|{\mbf s})T_{q+1}({\mbf s, \mbf r, \mbf r_1,\dots, \mbf r_q}) +$
		$$\dis{\sum_{i=1}^{q}}(B{\mbf r}|{\mbf r_i})T_q({\mbf s +\mbf r_i, \mbf r_1,\dots,\hat{\mbf r}_i, \dots,\mbf  r_q, \mbf r})$$ for all $q \geq 2$.\\

\end{lemma}  
\begin{proof}
	Note that $(1)$ follows from the definition. To prove $(2)$ collect all the terms where ${\mbf r_j}$ does not appear in the expressions of $T_q({\mbf s, \mbf r_1,\dots, \mbf r_q})$. It is easy to see that these terms equal to $T_{q-1}({\mbf s, \mbf r_1,\dots,\hat{\mbf r}_j ,\dots, \mbf r_q})$. On the other hand sum of the remaining terms of $T_q({\mbf s, \mbf r_1,\dots, \mbf r_q})$ is $-T_{q-1}({\mbf s +\mbf r_j, \mbf r_1,\dots, \hat{\mbf r}_j, \dots, \mbf r_q})$. This proves $(2)$. $(3)$ immediately follows from $(2)$. We prove $(4)$ by induction principal. Note that,
  \begin{center}
  	$[T({\mbf r}),T_2({\mbf s, \mbf r_1, \mbf r_2})]=[T({\mbf r}),T({\mbf s})-T({\mbf s+\mbf r_1})-T({\mbf s+ \mbf r_2})+T({\mbf s+ \mbf r_1+ \mbf r_2})]$\\
  	$=(B{\mbf r}|{\mbf s})\{T({\mbf r + \mbf s})-T({\mbf r})-T({\mbf s}) \}$$- \dis{\sum_{i=1}^{2}}(B{\mbf r}|{\mbf r_i+\mbf s})\{T({\mbf r +\mbf s+\mbf r_i})-T({\mbf r})-T({\mbf s+\mbf r_i}) \} +(B{\mbf r}|{\mbf s+\mbf r_1+\mbf r_2})\{ T({\mbf r +\mbf s+\mbf r_1+ \mbf r_2})-T({\mbf r})-T({\mbf s+ \mbf r_1+ \mbf r_2})\} $ \end{center}
  	$=-(B{\mbf r}|{\mbf s})T_{3}({\mbf s, \mbf r, \mbf r_1, \mbf r_2}) +\dis{\sum_{i=1}^{2}}(B{\mbf r}|{\mbf r_i})T_2({\mbf s +\mbf r_i, \mbf r_1,\dots,\hat{\mbf r}_i, \dots, \mbf r_2, \mbf r})$.\\ Hence the claim is true for $q=2$. 
  	Now assume that the claim is true for $q$. Then \\
 $[T({\bf r}),T_{q+1}({\mbf s, \mbf r_1,\dots, \mbf r_q}) ]$\\
 $=[T({\mbf r}),T_{q}({\mbf s, \mbf r_1 ,\dots, \mbf r_q})-T_{q}({\mbf s+\mbf r_{q+1}, \mbf r_1,\dots, \mbf r_{q}})] $\\
$= -(B{\mbf r}|{\mbf s})T_{q+1}({\mbf s, \mbf r, \mbf r_1,\dots, \mbf r_q}) +
\dis{\sum_{i=1}^{q}}(B{\mbf r}|{\mbf r_i})T_q({\mbf s +\mbf r_i, \mbf r_1,\dots,\hat{\mbf r}_i, \dots, \mbf r_q, \mbf r})$\\
$ +(B{\mbf r}|{\mbf s+ \mbf r_{q+1}})T_{q+1}({\mbf s+ \mbf r_{q+1}, \mbf r, \mbf r_1,\dots, \mbf r_q}) -
\dis{\sum_{i=1}^{q}}(B{\mbf r}|{\mbf r_i})T_q({\mbf s+ \mbf r_{q+1} +\mbf r_i, \mbf r_1,\dots,\hat{\mbf r}_i, \dots, \mbf r_q, \mbf r})$\\
$=  -(B{\mbf r}|{\mbf s})T_{q+2}({\mbf s, \mbf r, \mbf r_1,\dots, \mbf r_{q+1}}) +\dis{\sum_{i=1}^{q+1}}(B{\mbf r}|{\mbf r_i})T_{q+1}({\mbf s + \mbf r_i, \mbf r_1,\dots,\hat{\mbf r}_i, \dots, \mbf r_{q+1}, \mbf r})  .$ 
Therefore by induction principal the claim is true for all $q \geq 2$. 
\end{proof}
For each $q \in \N$, define $I_q= span \{T_q({\mbf s, \mbf r_1,\dots,\mbf r_q}): {\mbf s, \mbf r_1,\dots, \mbf r_q} \in \Z^N\}$. It is clear that $I_1=\mcal T_B$. Now we record some properties of $I_q$.
\begin{lemma}\label{L 4.4}
	($1$) $I_{q+1} \subseteq I_{q}$, for all $q \in \N$.\\
	(2) $I_q$ is an ideal of $\mcal T_B$ for all $q \geq 1$.\\
	(3) $[I_p,I_q]\subseteq I_{p+q-2}$ for all $p+q\geq 3$ and 
	$[I_1,I_1]\subseteq I_1$.
\end{lemma}
\begin{proof}
	(1) follows from Lemma 4.3 (2). (2) follows from Lemma 4.3(4) and (1). We prove (3) in appendix.
\end{proof}
\begin{lemma}\label{l4.5}
	
		(1) $T_q({\mbf s, \mbf r_1,\dots,\mbf r_q}) \notin I_{q+1}$ .\\
		(2) $\dis{\bigcap_{q\geq 1}}I_q=\{0\}$.\\
		(3) $T_q({\mbf s, \mbf r_1,\dots,\mbf r_q})+T_q({\mbf s, \mbf n_1,\dots,\mbf r_q})=T_q({\mbf s, \mbf r_1 +\mbf n_1,\dots,\mbf r_q}) $ mod $I_{q+1}$.\\
		($4$) $T_q({\mbf s, -\mbf r_1,\dots,\mbf r_q})=-T_q({\mbf s -\mbf r_1, \mbf r_1,\dots,\mbf r_q})$.
	
\end{lemma}
\begin{proof}
	Define a linear map $\eta:\mcal T_B \to \mcal A$ such that $\eta(T(\mbf s))=t^{\mbf s}$, for all $\mbf s \in \Z^N$. Set $P_q({ \mbf s,\mbf r_1,\dots,\mbf r_q})=t^{\mbf s}\prod_{i=1}^{q}(1-t^{\mbf r_i}) $, for all $\mbf s,\mbf r_1,\dots,\mbf r_q \in \Z^N$.   \\
	{\bf Claim:} $\eta(T_q({\mbf s, \mbf r_1,\dots, \mbf  r_q}))=P_q({\mbf s,\mbf  r_1,\dots, \mbf  r_q})$ for all $q \geq 1$. We prove it by induction on $q$. First note that \\
	$\eta(T_1({\mbf s, \mbf r_1}))= \eta(T(\mbf s)) -\eta(T(\mbf s +\mbf r_1))=t^{\mbf s}(1-t^{\mbf r_1})=P_1({\mbf s,\mbf r_1})$. Thus the claim is true for $q=1$. Let us assume that claim is true for $q=k-1$, for some $k >1$. Now by Lemma \ref{l4.4}(2) we have,\\
	\begin{align*}
	\eta(T_k({\mbf s, \mbf r_1,\dots, \mbf r_k})) & = \eta(T_{k-1}({\mbf s, \mbf r_2,\dots, \mbf r_k}))- \eta(T_{k-1}({\mbf s +\mbf r_1, \mbf r_2,\dots, \mbf r_k})) \cr
	&= P_{k-1}({\mbf s, \mbf r_2,\dots, \mbf r_k})-P_{k-1}({\mbf s+\mbf r_1, \mbf r_2,\dots, \mbf r_k}) \cr
	&=t^{\mbf s}\prod_{i=2}^{k}(1-t^{\mbf r_i})-t^{\mbf s+ \mbf r_1}\prod_{i=2}^{k}(1-t^{\mbf r_i})\cr
	&= P_{k}({\mbf s, \mbf r_2,\dots, \mbf r_k})
	\end{align*}
This proves the claim by principal of induction. For all $q \geq 1$ define, $S_q=span \{P_q({ \mbf s,\mbf r_1,\dots,\mbf r_q}): \mbf s,\mbf r_1,\dots,\mbf r_q \in \Z^N\}$. Clearly $S_q$ is an ideal of $\mcal A$. Moreover, from the above claim we get $\eta(I_q)=S_q$, for all $ q \geq 1$.\\
 Now to prove (1), it is sufficient to prove that $P_q({ \mbf s,\mbf r_1,\dots,\mbf r_q}) \notin S_{q+1}$, which will follow with the same method as Claim 1 and Claim 2 of Lemma 3.5 (\cite{R04}).

	To prove $(2)$ first observe that $S_q =S_1^q$, for all $q \geq 1$. Since $\eta$ maps basis of $\mcal T_B$ to linearly independent set, so $\eta$ is injective. Therefore to prove $(2)$, it is sufficient to prove that $\dis{\cap_{q \geq 1}}S_q=0$. Let $f \in \dis{\cap_{q \geq 1}}S_q$. Without loss of generality, we can assume that
	$f=\dis{\sum_{i}^{a}c_{\mbf m_i}t^{\mbf m_i}}$, where ${\bf m_i}=(m^1_i,\dots m^N_i)$ and $m_i^j \geq 0$ for all $1 \leq i \leq a$ and $ 1\leq j \leq N$. If not, we multiply $f$ by a monomial $g$ such that $gf=  \dis{\sum_{i}^{a}c_{\mbf m_i}t^{\mbf n_i}} \in \dis{\cap_{q \geq 1}}S_q $ and each components of $\mbf n_i$, for $1 \leq i \leq a$ becomes non-negative, where $t^{\mbf n_i}=gt^{\mbf m_i}$.\\
	 Let $l_j=max\{m_i^j: 1 \leq i \leq a \} $, for $ 1 \leq j \leq N$. Now consider the operator $d=(\frac{d}{dt_1})^{l_1} \dots (\frac{d}{dt_1})^{l_N}$. Then we have $df \neq 0$. Let $\dis{\sum_{i}^{N}l_i}=L.$ Now $f \in S_{L+1}$, hence every non zero terms of $df$ has a factor of the form $(1-t^{\bf m})$, for some $\mbf m \in \Z^N$. Therefore $df(1,1,..,1)=0$, a contradiction. Thus $\dis{\cap_{q \geq 1}}S_q=\{0\}$.\\
	 The proof of $(3)$ follows on similar lines as $(2)$ of Lemma 3.5, \cite{R04}. To prove (4) one can just note that $\eta(T_q({\mbf s, -\mbf r_1,\dots,\mbf r_q})+T_q({\mbf s -\mbf r_1, \mbf r_1,\dots,\mbf r_q}))=0$, now the proof follows from the fact that $\eta $ is injective.


\end{proof}
\begin{lemma}\label{l4.6}
	
		(1) $T_q({\mbf s, \mbf r_1,\dots, \mbf r_q})=T_q({\mbf r, \mbf r_1,\dots, \mbf r_q})$ mod $I_{q+1}$, for all $\mbf s, \mbf r ,\mbf r_1,\dots, \mbf r_q \in \Z^N$.\\
		(2) $-T_q({\mbf 0, \bf r_1,\dots, r_q})= T_q({\mbf 0, \bf -r_1,\dots, r_q})$ mod $I_{q+1}$.\\
		(3) dim $I_q/I_{q+1} \leq N^q$.\\
		($4$) $I_q$ is a co-finite ideal in $\mcal T_B$.
	
\end{lemma}
\begin{proof}
(1) follows from Lemma \ref{l4.4}(2), (2) follows from Lemma \ref{l4.5}(3) and proof of (3),(4) is similar to Lemma 3.6 of \cite{R04}.
\end{proof}
\begin{lemma}\label{l4.7}
Let $J$ be a co-finite ideal in $\mcal T_B$. Then $J$ contains $I_k$ for sufficiently large $k.$ 
\end{lemma}
\begin{proof}
	Consider the canonical projection $\pi: \mcal T_B \to \mcal T_B/J$. Note that the image of $I_k$ under $\pi$ is isomorphic to $I_k/({J\cap I_k})$. Now from Lemma 4.4(1) and Lemma 4.5(1) we have $I_{k+1} \subsetneq I_k$ for all $k \geq 1$. Therefore images of $I_k$ under $\pi$ forms a decreasing sequences of ideals of $\mcal T_B/J$, for all $k \geq 1$. Again by hypothesis $J$ is a cofinite ideal, thus $\{I_k/(J \cap I_k)\}_{k \geq 1}$ is a decreasing sequence of finite dimensional algebras. Hence there exists a $k \in \N$ such that dim $I_k/(J\cap I_k) =$ dim $I_{q}/(J\cap I_{q})$, for some $k \geq 1$,  for all $q \geq k$. Thus it is sufficient to prove that $I_k/(J\cap I_k)=0$.\\ 
	Indeed, let $0 \neq x_k \in I_k/(J\cap I_k)$. Since dim $I_k/(J\cap I_k) =$ dim $I_{k+q}/(J\cap I_{k+q})$ and $\{I_k/J\cap I_k\}$ is a family of decreasing sequences, there exists $x_{k+q} \in I_{k+q}/(J\cap I_{k+q})$ such that $x_k=x_{k+q} $ mod $I_k \cap J$, i.e $x_k-x_{k+q}=0$ in  $I_k/I_k \cap J.$ This means that $x_k-x_{k+q}=0$ in  $I_{k+1}/I_{k+1} \cap J.$ Hence there exists $y_{k+q} \in I_{k+q}\cap J$ such that $x_k-x_{k+q}=y_{k+q}$, i.e $x_k \in I_{k+q}$ for all $q \geq 1$. This proves that $x_k \in \cap_{k \geq 1} I_k$, a contradiction to Lemma \ref{l4.5}(2).
	
\end{proof}
Now we record a Lemma from \cite{JH} which we use to prove Proposition 4.1.
\begin{lemma}( Proposition 19.1,\cite{JH})\label{l4.3}
	Let $\phi: \mf L \to \mf{gl}(V)$ be a finite dimensional irreducible representation of a Lie algebra $\mf L$. Then $\phi({\mf L})$ is a reductive Lie algebra with at most one dimensional center. 
\end{lemma}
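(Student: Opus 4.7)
The plan is to combine Schur's lemma with Lie's theorem applied to the solvable radical. Set $\mathfrak{g} := \phi(\mathfrak{L}) \subseteq \mathfrak{gl}(V)$, so that $V$ is a faithful finite dimensional irreducible $\mathfrak{g}$-module over $\mathbb{C}$. First I would bound the center: any $z \in Z(\mathfrak{g})$ intertwines the $\mathfrak{g}$-action on $V$, so by Schur's lemma $z$ acts as a scalar on $V$; since $\mathfrak{g}$ sits inside $\mathfrak{gl}(V)$, this forces $z \in \mathbb{C}\cdot\mathrm{id}_V$, giving $\dim Z(\mathfrak{g}) \leq 1$.

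Next, to identify the radical with the center, let $\mathfrak{r}$ denote the solvable radical of $\mathfrak{g}$. Lie's theorem, applied to $\mathfrak{r}$ acting on the finite dimensional space $V$, yields a weight $\lambda \in \mathfrak{r}^*$ together with a nonzero weight space
$$V_\lambda := \{v \in V : x.v = \lambda(x)v \text{ for all } x \in \mathfrak{r}\}.$$
The standard ideal-invariance lemma used in the proof of Lie's theorem shows that $V_\lambda$ is stable under all of $\mathfrak{g}$, because $\mathfrak{r}$ is an ideal in $\mathfrak{g}$. Irreducibility of $V$ then forces $V_\lambda = V$, meaning every $x \in \mathfrak{r}$ acts as the scalar $\lambda(x)\cdot\mathrm{id}_V$. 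Since $\mathfrak{g} \subseteq \mathfrak{gl}(V)$, this gives $\mathfrak{r} \subseteq \mathbb{C}\cdot\mathrm{id}_V \subseteq Z(\mathfrak{g})$, and the reverse inclusion $Z(\mathfrak{g}) \subseteq \mathfrak{r}$ is automatic since the center is abelian hence solvable. Therefore the radical of $\mathfrak{g}$ equals its center, which by definition makes $\mathfrak{g}$ reductive, and we have already shown this center has dimension at most one.

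The only genuinely substantive step is the $\mathfrak{g}$-invariance of the weight space $V_\lambda$, which amounts to proving $\lambda([\mathfrak{g},\mathfrak{r}]) = 0$. This is the technical heart of Lie's theorem and relies essentially on characteristic zero: one runs the classical trace argument on the subspace spanned by $v, xv, x^2v, \ldots$ for $v \in V_\lambda$ and $x \in \mathfrak{g}$, using that this subspace is $\mathfrak{r}$-stable and that $\mathfrak{r}$-traces of commutators must vanish. Everything else is formal bookkeeping on top of Schur's lemma and the definition of a reductive Lie algebra, which is why the statement can be quoted directly from Humphreys without further elaboration in the paper.
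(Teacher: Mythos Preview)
Your argument is correct. The paper itself gives no proof of this lemma at all---it simply attributes the result to Proposition~19.1 of Humphreys \cite{JH}---and what you have written is essentially the standard proof found there: Schur's lemma bounds the center, and the ideal-invariance step from Lie's theorem forces the solvable radical to act by scalars, hence to coincide with the center.
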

\begin{proposition}\label{p4.1}
	Let $V_0$ be a finite dimensional irreducible module for $\mcal T_B$. Then $I_3$ acts trivially on $V_0$.
\end{proposition}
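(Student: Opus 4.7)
The plan is to show $\phi(I_3)=0$, where $\phi:T\to\mathfrak{gl}(V_0)$ is the representation map. Set $K=\ker\phi$. Since $V_0$ is finite dimensional, $K$ is a cofinite ideal of $T$, so by Lemma \ref{l4.7} there exists $k\geq 3$ with $I_k\subseteq K$. Moreover, by Lemma \ref{l4.3}, the image $\phi(T)\subseteq\mathfrak{gl}(V_0)$ is reductive with center of dimension at most one. The task is to upgrade $I_k\subseteq K$ to $I_3\subseteq K$.

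First I would show that $\phi(I_3)$ is contained in the center of $\phi(T)$, so in particular $\dim\phi(I_3)\leq 1$. Using $[I_p,I_q]\subseteq I_{p+q-2}$ from Lemma \ref{l4.4}(8), one checks by induction that the derived ideals satisfy $I_3^{(j)}\subseteq I_{2^j+2}$. For $j$ large, $2^j+2\geq k$, whence $I_3^{(j)}\subseteq K$ and $\phi(I_3)$ is solvable. But $\phi(I_3)$ is also an ideal of the reductive Lie algebra $\phi(T)$ and hence itself reductive; a solvable reductive Lie algebra is abelian, and an abelian ideal of a reductive Lie algebra lies in the center. By Schur applied to the irreducible $V_0$, we may therefore write $\phi(x)=\chi(x)\cdot\mathrm{id}_{V_0}$ for a linear functional $\chi:I_3\to\C$, and it remains to prove $\chi\equiv 0$.

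I would establish $\chi|_{I_q}=0$ for all $q\geq 3$ by downward induction from $q=k$. The base $q=k$ is immediate. For the inductive step, assume $\chi|_{I_{q+1}}=0$. Since $\phi(I_3)$ is central, $\phi([T(\mathbf r),y])=0$ for all $y\in I_3$; applying $\chi$ to the identity in Lemma \ref{l4.4}(6) and using the inductive hypothesis to kill the $T_{q+1}$ term gives
\[
\sum_{i=1}^{q}(B\mathbf r|\mathbf r_i)\,\chi\bigl(T_q(\mathbf s+\mathbf r_i;\mathbf r_1,\ldots,\widehat{\mathbf r_i},\ldots,\mathbf r_q,\mathbf r)\bigr)=0.
\]
By Lemma \ref{l4.6}(1) and $\chi|_{I_{q+1}}=0$, the value $\chi(T_q(\mathbf s+\mathbf r_i;\cdots))$ is independent of the first slot, and by Lemmas \ref{l4.4}(1), \ref{l4.5}(3), \ref{l4.5}(4) the resulting map $f(\mathbf r_1',\ldots,\mathbf r_q'):=\chi(T_q(\mathbf 0;\mathbf r_1',\ldots,\mathbf r_q'))$ is a symmetric $\Z$-multilinear form on $\Z^N$.

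Specializing $\mathbf r_1=\cdots=\mathbf r_q=\mathbf e$ collapses the identity to $q\,(B\mathbf r|\mathbf e)\,f(\mathbf e,\ldots,\mathbf e,\mathbf r)=0$ for all $\mathbf e,\mathbf r\in\Z^N$. Non-degeneracy of $B$ ensures that for any $\mathbf e\neq 0$ the sublattice $\{\mathbf r:(B\mathbf r|\mathbf e)=0\}$ has rank strictly less than $N$, so its complement in $\Z^N$ generates $\Z^N$ as a group; the $\Z$-linear function $\mathbf r\mapsto f(\mathbf e,\ldots,\mathbf e,\mathbf r)$ therefore vanishes identically, and setting $\mathbf r=\mathbf e$ gives $f(\mathbf e,\ldots,\mathbf e)=0$ for every $\mathbf e$. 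Polarization of the symmetric multilinear $f$ then forces $f\equiv 0$, hence $\chi|_{I_q}=0$ and the induction closes with $\chi|_{I_3}=0$, i.e.\ $\phi(I_3)=0$. The main obstacle is precisely this final step: the abstract reductive/centrality argument only bounds the central scalar by the dimension of the center, and one has to exploit the explicit combinatorics of Lemma \ref{l4.4}(6) together with non-degeneracy of $B$ to force that scalar to vanish.
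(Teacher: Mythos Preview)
Your argument is correct and follows essentially the same route as the paper's proof: both start from Lemma~\ref{l4.7} to obtain $I_k\subseteq\ker\phi$, invoke reductivity (Lemma~\ref{l4.3}) to force centrality, and then descend using the commutator formula of Lemma~\ref{l4.4}(6) with the specialization $\mathbf r_1=\cdots=\mathbf r_q=\mathbf e$, non-degeneracy of $B$, and multilinearity modulo the next ideal. The only organizational difference is that the paper re-establishes centrality of $\phi(I_q)$ at each step of the descent (via $[I_q,I_q]\subseteq I_{2q-2}\subseteq\ker\phi$), whereas you prove once that $\phi(I_3)$ is solvable, hence central, and then run the descent entirely on the scalar $\chi$; this is a tidy repackaging but not a different idea.
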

\begin{proof}
	Let $\phi: \mcal T_B \to \mf{gl}(V_0)$ be the representation homomorphism. Then by Lemma \ref{l4.7} $I_{k+1} \subseteq ker \phi$ for sufficiently large $k$. Now by Lemma \ref{L 4.4}(4) $[I_k,I_k] \subseteq I_{2k}$ and $I_{2k} \subseteq ker \phi$. Therefore $\phi(I_k)$ is an abelian ideal. Note that Lemma \ref{l4.3} imples that $\phi(I_k)$ is central and hence $\phi[\mcal T_B,I_k]$ acts trivially on $V_0$. Now Lemma \ref{l4.4}(4) gives 
	$$[T({\mbf m}),T_k({\mbf r,\mbf s,\dots,\mbf s})] =-(B{\mbf m}|{\mbf r})T_{k+1}({\mbf r, \mbf m, \mbf s,\dots, \mbf s}) +
	k(B{\mbf m}|{\mbf s})T_k({\mbf r + \mbf s, \mbf s,\dots, \mbf s, \mbf m}),$$ for all $\mbf m, \mbf r, \mbf s \in \Z^N$. \\
	{\bf Claim:} $T_k({\bf a,m,s,\dots,s}) \in ker \phi$ for all ${\bf a, m , s} \in \Z^N$.\\
	Case (i): Let us fix $\mbf m, \mbf s$ such that $(B{\bf m}|{\bf s}) \neq 0$.
	 Since $I_{k+1}$ and $[\mcal T_B,I_k]$ contained in $ker \phi$, from the above formula we have $T_k({\bf r+s,s,\dots,s, m}) \in ker \phi$. Since $\mbf r \in \Z^N$ is arbitrary, we have  $T_k({\bf a,s,\dots,s,m}) \in ker \phi$ for arbitrary $ {\bf a} \in \Z^N$. Now the claim follows for this case from Lemma \ref{l4.4}(1). \\
	 Case (ii): Let us fix $\mbf m,\mbf s \in \Z^N$ such that $(B{\bf m}|{\bf s}) =0$. Clearly, if one of $\bf m,$ or $\bf  s$ is zero, then the claim follows immediately from definition. Let both $\bf m,s$ are non-zero. Let $U_{\mbf s}$ be the orthogonal complement of $B \mbf s$. Then $\Z^N \setminus (U_{\mbf s} \cap \Z^N)$ is a proper subset of $\Z^N$ as $B$ is non degenerate. Hence there exists $\mbf b \in \Z^N$ such that $(B{\mbf b}| \mbf s) \neq 0$. Therefore by case (i), we have $T_k({\mbf a,\mbf b, \mbf s,\dots,\mbf s}) \in ker \phi$ and $T_k({\bf a,m-b,s,\dots,\mbf s}) \in ker \phi$, as $(B({\bf m-b})|\mbf s) \neq 0$. Hence by Lemma \ref{l4.5}(3) we have 
	 $$T_k({\bf a,m,s,\dots,s})= T_k({\bf a,b,s,\dots,s})+T_k({\bf a,m-b,s,\dots,s}) + I_{k+1} \in ker \phi$$
	  for all ${\bf a} \in \Z^N$. This completes the proof of the Claim. \\ Now we consider the relation:
	  $$[T({\mbf m}),T_k({\mbf a, \mbf m_1, \mbf s\dots,\mbf s})] =-(B{\mbf m}|{\mbf a})T_{k+1}({\mbf a,\mbf m, \mbf m_1, \mbf s,\dots, \mbf s}) +
	  (B{\mbf m}|{\mbf m_1})T_k({\mbf a +\mbf m_1, \mbf s,\dots, \mbf s, \mbf m})    $$
	  $$+(k-1)(B{\mbf m}|{\mbf s})T_k({\mbf a + \mbf s,\mbf m_1, \mbf s,\dots,\mbf  s, \mbf m})  $$ for all $\mbf a, \mbf m_1, \mbf m, \mbf s \in \Z^N$.
	  Now continuing the above method we get $T_k({\mbf a,\mbf m_1,\mbf m_2, \mbf s\dots, \mbf s}) \in ker \phi$ for all $\mbf m_1, \mbf m_2, \mbf a, \mbf s \in \Z^N$. Continuing this process by we have $I_k \subseteq ker \phi$. Now instead of $I_{k+1}$ we consider $I_k \subseteq ker \phi$ and repeat the above process to deduce $I_3 \subseteq ker \phi$ (see Lemma \ref{L 4.4}(3) why it fails for $I_2$). 
\end{proof}
\begin{lemma}\label{l4.8}
 dim $\mcal T_B/I_3 =$ $4m^2+2m$.
\end{lemma}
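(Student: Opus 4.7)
The plan is to use the filtration $T=I_1\supseteq I_2\supseteq I_3$ from Lemma~\ref{l4.4}(4)(5), which gives $\dim T/I_3=\dim T/I_2+\dim I_2/I_3$, reducing the problem to bounding each graded piece separately.

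For the upper bound I would directly invoke Lemma~\ref{l4.6}(3): applied at $q=1$ and $q=2$ it yields $\dim T/I_2\leq N=2m$ and $\dim I_2/I_3\leq N^{2}=4m^{2}$, summing to $4m^{2}+2m$.

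For the matching lower bound I would exhibit an explicit spanning set and verify its independence. Lemma~\ref{l4.6}(1) eliminates the $\mathbf{s}$-argument modulo $I_{q+1}$, and Lemma~\ref{l4.5}(3) combined with Lemma~\ref{l4.4}(1) makes $T_q(0,\mathbf{r}_1,\dots,\mathbf{r}_q)\bmod I_{q+1}$ multilinear in the $\mathbf{r}_j$'s. Hence the classes of $T_q(0,\mathbf{e}_{i_1},\dots,\mathbf{e}_{i_q})$ for $1\leq i_j\leq N$ span $I_q/I_{q+1}$ with cardinality $N^q$---that is, $2m$ in degree one and $4m^{2}$ in degree two. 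Linear independence in degree one follows from the evaluation map $\eta:T\to A$ constructed in the proof of Lemma~\ref{l4.5}(2), since $\eta(T(\mathbf{e}_i))=t_i-1$ are manifestly independent in $J_1\subset A$, and non-triviality is guaranteed by Lemma~\ref{l4.5}(1).

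The main technical hurdle is independence in degree two. The map $\eta$ sends $T_2(0,\mathbf{e}_i,\mathbf{e}_j)$ to the polynomial $(1-t_i)(1-t_j)$, which is symmetric in $(i,j)$, so on its own $\eta$ only detects at most $N(N+1)/2$ independent classes in $I_2/I_3$. To recover the remaining antisymmetric directions I would pair against specific jet-module representations of $\wtil H_B\ltimes A$ built from the construction given just before this lemma, in which the Lie-bracket action $[T(\mathbf{r}),T(\mathbf{s})]=(B\mathbf{r}\,|\,\mathbf{s})[T(\mathbf{r}+\mathbf{s})-T(\mathbf{r})-T(\mathbf{s})]$, together with the non-degeneracy of $B$, produces an antisymmetric invariant separating $T_2(0,\mathbf{e}_i,\mathbf{e}_j)$ from $T_2(0,\mathbf{e}_j,\mathbf{e}_i)$ at the level of operators. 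Combining the two sets of invariants should yield the matching lower bound $\dim T/I_3\geq 4m^{2}+2m$, paralleling the dimension counts carried out for other Cartan-type derivation algebras in \cite{R04} and \cite{TH}, and closing the computation.
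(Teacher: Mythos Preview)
Your degree-two lower-bound argument has a genuine gap, and in fact the problem is not with your invariants but with the target itself. By Lemma~\ref{l4.4}(1) the expression $T_2(\mathbf{s},\mathbf{r}_1,\mathbf{r}_2)$ is \emph{exactly} symmetric in $\mathbf{r}_1,\mathbf{r}_2$ --- this is an identity in $T$, not merely a congruence modulo $I_3$. Consequently $T_2(0,\mathbf{e}_i,\mathbf{e}_j)=T_2(0,\mathbf{e}_j,\mathbf{e}_i)$ on the nose, and the list $\{T_2(0,\mathbf{e}_i,\mathbf{e}_j):1\le i,j\le 2m\}$ contains at most $\binom{2m+1}{2}=2m^2+m$ distinct elements, not $4m^2$. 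The ``antisymmetric directions'' you want to separate with jet-module operators are literally zero: $T_2(0,\mathbf{e}_i,\mathbf{e}_j)-T_2(0,\mathbf{e}_j,\mathbf{e}_i)=0$ in $T$, so no representation of $T$ can distinguish them. Thus the bound $\dim I_2/I_3\le N^2$ from Lemma~\ref{l4.6}(3) is not sharp here, and the announced value $4m^2+2m$ cannot be obtained from this spanning set; the statement as written appears to overcount.

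For comparison, the paper's own proof follows exactly your upper-bound strategy: it uses the filtration $T=I_1\supseteq I_2\supseteq I_3$, invokes multilinearity modulo $I_{q+1}$ to get the spanning sets $\{T_1(0,\mathbf{e}_i)\}$ and $\{T_2(0,\mathbf{e}_i,\mathbf{e}_j)\}$, and then simply \emph{asserts} that these are bases, counting $2m+(2m)^2$ without ever checking linear independence or noticing the symmetry constraint. So your upper-bound portion matches the paper; the paper does not attempt the lower bound you are trying to supply, and the obstruction you encountered (that $\eta$ only sees $N(N{+}1)/2$ classes) is not an artifact of $\eta$ but a reflection of the actual symmetry of $T_2$. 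Any correct argument would have to either revise the claimed dimension downward to account for Lemma~\ref{l4.4}(1), or explain why the symmetric elements alone already number $4m^2$ --- and the latter is impossible.
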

\begin{proof}
	We know that $I_3 \subseteq I_2 \subseteq I_1=\mcal T_B$. It is clear that dim $\mcal T_B/I_3$= dim $\mcal T_B/I_2$ + dim $ I_2/I_3$. Note that by Lemma \ref{l4.5}(3) and \ref{l4.6}(2) we have $T_2(\mbf 0,\bf r, \bf s)$ is linear in both $\bf r, \bf s$ mod $I_3$. As $I_2/I_3$ is spanned by $T_2(\mbf 0,\mbf r, \mbf s)+I_3$, $\{ T_2(\mbf 0,{\mbf e_i},{\mbf e_j}) +I_3: 1 \leq i, j \leq 2m\}$ is a basis for $I_2/I_3$. By a similar argument we have $I_1$  is spanned by $T_1(\mbf 0, \bf r) $ mod $I_2$ and $T_1(\mbf 0, \bf r)$ is linear in $\bf r$ mod $I_2$. Hence $\{ T_1(\mbf 0,{\mbf e_i}): 1 \leq i \leq 2m\}$ forms a basis for $I_1/I_2$. This proves the lemma.
\end{proof}

 For a skew symmetric matrix $B$ define  $\mf{G}_B= \{X \in \mf{gl}(N,\C): BX=-X^TB \} $. It is easy to check that $\mf G_B$ is a Lie algebra and $ {\bf r}(B{\bf r})^T \in \mf G_B$.  Note that if $B$ can be written as $B=A^TB'A$ for some $A \in GL(N,\C)$ then $B'$ is also a skew symmetric matrix and $A \mf G_B A^{-1}=\mf G_{B'}$.\\
  It is well known that $\mf G_{J_m} \simeq \mf{sp}_{2m}$ and dimension of $\mf G_{J_m}$ is $2m^2+m$. Consider a linear map $\psi_B :\mcal T_B \to  \mf{G}_B$ given by
 $$ \psi_B(T({\bf r}))= {\bf r}(B{\bf r})^T,$$ here we treated ${\bf r}$ as column vector and hence $(B{\bf r})^T$ is a row vector for all ${\bf r } \in \Z^N$.
 \begin{lemma}
 	$\psi_B$ is a Lie algebra homomorphism.
 \end{lemma}
\begin{proof}
Note that, \begin{align*} 
	\psi_B([T({\bf r}), T(\mbf s)]) & = (B\mbf r|\mbf s )\psi_B(T(\mbf r+ \mbf s)-T(\mbf r)-T(\mbf s)) \cr
	 &= (B\mbf r|\mbf s ) \{ (\mbf r + \mbf s )(B({\mbf r + \mbf s}))^T - {\mbf r}(B{\mbf r})^T -{\mbf s}(B{\mbf s})^T  \} \cr
	 &= (B\mbf r|\mbf s )\{{\mbf r}(B{\mbf s})^T + {\mbf s}(B{\mbf r})^T\}
	\end{align*}
Again, \begin{align*}
	[{\mbf r}(B{\mbf r})^T, {\mbf s}(B{\mbf s})^T] &= {\mbf r}(B{\mbf r})^T{\mbf s}(B{\mbf s})^T-{\mbf s}(B{\mbf s})^T{\mbf r}(B{\mbf r})^T\cr 
	&= (B\mbf r|\mbf s ){\mbf r}(B{\mbf s})^T-(B\mbf s|\mbf r ){\mbf s}(B{\mbf r})^T \cr
	&= (B\mbf r|\mbf s )\{{\mbf r}(B{\mbf s})^T + {\mbf s}(B{\mbf r})^T\}.
\end{align*}
Thus we have $\psi_B([T({\bf r}), T(\mbf s)])=[\psi_B(T({\bf r})), \psi_B(T(\mbf s))])$, for all $\mbf r, \mbf s \in \Z^N$. This completes the proof.	
\end{proof}

	 \begin{lemma}\label{l4.9}
 $\psi_{J_m}:\mcal {T}_{{J_m}} \to \mf{sp}_{2m}$ is a surjective Lie algebra homomorphism.
 \end{lemma}

\begin{proof}
First note that ${\bf r}({J_m\mbf r})^T \in \mf{sp}_{2m}.$ For $1 \leq i\leq m$, take ${\mbf r= \mbf e_i}$, then ${\mbf r}({J_m\mbf r})^T=-{\mbf e_i \mbf e_{m+i}^T}=-E_{i,m+i}$. Again  for ${\mbf r =\mbf e_{m+i}}$, ${\mbf r}({J_m\mbf r})^T=-E_{m+i,i}$. It is easy to see that ${\mbf r}({J_m\mbf s})^T+{\mbf s}({J_m\mbf r})^T \in \mf{sp}_{2m}$. It follows from the following table that the image of $\psi_{J_m}$
contains the standard basis of $\mf{sp}_{2m}$ (see e.g. \cite{JH}, Section 1.2).
 
\begin{center}

 \begin{tabular} {|c|l|r|}
	
	\hline
	${\bf r}$ & ${\bf s}$ & ${\bf r}({J_m\mbf s})^T+{\bf s}({J_m\mbf r})^T$ \\
	\hline
	 ${\mbf e_{i}}$   &  ${\mbf  e_{j}}$ &   $-(E_{i,m+j}+E_{j,m+i})$      \\
	 ${\mbf e_{i}}$   &  ${\mbf  e_{m+j}}$ &   $-(E_{i,j}-E_{m+j,m+i})$      \\
	
	 ${\mbf e_{m+i}}$   &  ${\mbf  e_{m+j}}$ &   $E_{m+i,j}+E_{m+j,i}$      \\

	\hline 
\end{tabular}  
\end{center}
In the above table $i,j$ runs from $1$ to $m$.
\end{proof}
\begin{lemma}\label{l4.10}
	$\psi_B$ is a surjective map. 
	
\end{lemma}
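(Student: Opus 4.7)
The plan is to show that the image of $\psi_B$ equals $\mathfrak{G}_B$ as a linear subspace, via polarization followed by identifying $\mathfrak{G}_B$ with the space of symmetric matrices. First, since $\psi_B$ is linear, its image contains
$$\psi_B(T(\mathbf{r}+\mathbf{s})) - \psi_B(T(\mathbf{r})) - \psi_B(T(\mathbf{s})) = \mathbf{r}(B\mathbf{s})^T + \mathbf{s}(B\mathbf{r})^T$$
for all $\mathbf{r},\mathbf{s}\in\mathbb{Z}^N$, and the $\mathbb{C}$-span of these expressions coincides with their span as $\mathbf{r},\mathbf{s}$ range over $\mathbb{C}^N$, by the Zariski density of $\mathbb{Z}^N$ in $\mathbb{C}^N$.

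Next I would identify $\mathfrak{G}_B$ with the space $\mathrm{Sym}_N$ of symmetric $N\times N$ matrices via the linear isomorphism $X\mapsto BX$. Using $B^T=-B$, one computes $(BX)^T=X^TB^T=-X^TB$, so the defining relation $BX=-X^TB$ is precisely the condition that $BX$ be symmetric; in particular $\dim\mathfrak{G}_B=N(N+1)/2=2m^2+m$. Under this identification, the polarized image elements transform as
$$B\bigl(\mathbf{r}(B\mathbf{s})^T+\mathbf{s}(B\mathbf{r})^T\bigr) = (B\mathbf{r})(B\mathbf{s})^T+(B\mathbf{s})(B\mathbf{r})^T,$$
and writing $\mathbf{u}=B\mathbf{r}$, $\mathbf{v}=B\mathbf{s}$, the invertibility of $B$ shows these cover all $\mathbf{u}\mathbf{v}^T+\mathbf{v}\mathbf{u}^T$ with $\mathbf{u},\mathbf{v}\in\mathbb{C}^N$. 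Since such rank-two symmetric tensors span $\mathrm{Sym}_N$, the image of $\psi_B$ corresponds to all of $\mathrm{Sym}_N$ and hence equals $\mathfrak{G}_B$.

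I do not anticipate a serious obstacle; the whole argument is essentially linear algebra once the explicit formula for $\psi_B$ is in hand. The one point deserving care is the Zariski density step, used to replace $\mathbf{r},\mathbf{s}\in\mathbb{Z}^N$ by $\mathbf{r},\mathbf{s}\in\mathbb{C}^N$. An alternative approach, more in the spirit of Lemma \ref{l4.9}, is to reduce to the $B=J$ case: by Lemma \ref{l3.1} pick $A\in GL_N(\mathbb{C})$ with $B=A^TJA$; conjugation by $A$ carries $\mathfrak{G}_B$ isomorphically onto $\mathfrak{G}_J=\mathfrak{sp}_{2m}$ and transforms $\psi_B(T(\mathbf{r}))$ into $(A\mathbf{r})(J(A\mathbf{r}))^T$, whence Lemma \ref{l4.9} together with the Zariski density of $A\mathbb{Z}^N$ in $\mathbb{C}^N$ delivers surjectivity.
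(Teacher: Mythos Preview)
Your main argument is correct and takes a genuinely different route from the paper. You identify $\mf{G}_B$ with $\mathrm{Sym}_N$ via the linear isomorphism $X\mapsto BX$ and then show directly, via polarization, that the image of $\psi_B$ corresponds to all of $\mathrm{Sym}_N$; this bypasses Lemma~\ref{l4.9} entirely and treats every nondegenerate $B$ uniformly. The paper instead follows precisely your alternative route: it writes $B=A^TJA$ via Lemma~\ref{l3.1}, computes ${\bf r}(B{\bf r})^T=A^{-1}(A{\bf r})(JA{\bf r})^TA$, and then invokes the surjectivity of $\psi_J$ (Lemma~\ref{l4.9}) together with the passage from $\Z^N$ to $\C^N$ to conclude. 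Your direct approach is cleaner and more self-contained, and in fact subsumes Lemma~\ref{l4.9} as the special case $B=J$; the paper's approach has the small advantage of making the isomorphism $\mf{G}_B\simeq\mf{sp}_{2m}$ explicit through conjugation by $A$. One minor remark: you invoke Zariski density to pass from $\Z^N$ to $\C^N$, but this is heavier than needed --- the polarized expression ${\bf r}(B{\bf s})^T+{\bf s}(B{\bf r})^T$ is bilinear in $({\bf r},{\bf s})$ and the standard basis vectors lie in $\Z^N$, so ordinary linearity already gives the same span over $\C^N$.
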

\begin{proof}
From Lemma \ref{l3.1} we have $B=A^TJ_mA$ for some $A \in GL(N,\C)$. Also we know that $A\mf G_B A^{-1}=\mf G_{J_m}.$ Note that,
$\mf G_{J_m}=span\{ {\bf r}(J_m{\bf r})^T: {\bf r }\in \Z^N \}$
$= span\{{\bf r}(J_m{\bf r})^T: {\bf r }\in \C^N \}$
$= span\{{A\mbf r}(J_m{A\mbf r})^T: {\bf r }\in \C^N \}$, for some fixed $A \in GL(N,\C)$. Now consider ${\bf r}(B{\bf r})^T={\bf r}(A^TJ_mA{\bf r})^T ={\bf r}{\bf r}^TA^TJ_m^TA={\bf r}(J_mA{\bf r})^TA= A^{-1}(A{\bf r})(J_mA{\bf r})^TA. $ Therefore we have $span\{ {\bf r}(B{\bf r})^T: {\bf r} \in\Z^N \}= A^{-1}span\{A{\bf r}(J_mA{\bf r})^T : {\bf r }\in \Z^N\}A = A^{-1}\mf G_{J_m}A=\mf G_B.$ This completes the proof. 	
\end{proof}
\begin{lemma}\label{l4.11}
 $ker\psi_B$ is central ideal modulo $I_3$.
\end{lemma}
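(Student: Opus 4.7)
The strategy is to factor $\psi_B$ through $T/I_3$, identify the kernel of the induced map as a specific complement of $I_2/I_3$, and then verify centrality by a short bracket computation. First I check $I_3\subseteq\ker\psi_B$: since $\psi_B(T(\mathbf{r}))=\mathbf{r}(B\mathbf{r})^T$ is a quadratic polynomial function of $\mathbf{r}$, and iterating Lemma \ref{l4.4}(2) identifies $T_3(\mathbf{s},\mathbf{r}_1,\mathbf{r}_2,\mathbf{r}_3)$ with an order-three iterated forward difference in the $\mathbf{r}_i$'s applied to $T(\mathbf{s})$ (which annihilates any polynomial in $\mathbf{s}$ of degree at most $2$), we get $\psi_B(T_3(\mathbf{s},\mathbf{r}_1,\mathbf{r}_2,\mathbf{r}_3))=0$. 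Thus $\psi_B$ descends to a surjection $\bar\psi_B\colon T/I_3\to\mf{G}_B$, with surjectivity coming from Lemma \ref{l4.10}.

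The crux is to determine $\ker\bar\psi_B$. Restricting $\bar\psi_B$ to $I_2/I_3$, direct expansion gives $\bar\psi_B(T_2(0,\mathbf{r},\mathbf{s}))=\mathbf{r}(B\mathbf{s})^T+\mathbf{s}(B\mathbf{r})^T$, which is bilinear and symmetric in $\mathbf{r},\mathbf{s}$. The polarization identity $\bar\psi_B(T(\mathbf{r}))=\tfrac{1}{2}\bar\psi_B(T_2(0,\mathbf{r},\mathbf{r}))$ shows that $\bar\psi_B(I_2/I_3)$ already equals all of $\mf{G}_B$; on the other hand Lemmas \ref{l4.4}(1), \ref{l4.5}(3), and \ref{l4.6} imply that the assignment $\mathbf{r}\vee\mathbf{s}\mapsto T_2(0,\mathbf{r},\mathbf{s})\pmod{I_3}$ descends to a surjection $S^2(\mathbb{C}^{2m})\twoheadrightarrow I_2/I_3$, giving $\dim(I_2/I_3)\le m(2m+1)=\dim\mf{G}_B$. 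These two bounds force $\bar\psi_B|_{I_2/I_3}$ to be an isomorphism, yielding the vector space decomposition $T/I_3=(I_2/I_3)\oplus\ker\bar\psi_B$. Explicitly, $W_\mathbf{r}:=T(\mathbf{r})-\tfrac{1}{2}T_2(0,\mathbf{r},\mathbf{r})\in\ker\psi_B$ for every $\mathbf{r}$, and the classes of the $W_\mathbf{r}$'s span $\ker\bar\psi_B$, because the $T(\mathbf{r})$'s span $T$.

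Finally I verify centrality. The defining bracket gives $[T(\mathbf{s}),T(\mathbf{r})]=(B\mathbf{s}|\mathbf{r})T_2(0,\mathbf{s},\mathbf{r})$, and applying Lemma \ref{l4.4}(6) with $q=2$, first slot equal to $0$, and $\mathbf{r}_1=\mathbf{r}_2=\mathbf{r}$, combined with Lemma \ref{l4.6}(1), yields $[T(\mathbf{s}),T_2(0,\mathbf{r},\mathbf{r})]\equiv 2(B\mathbf{s}|\mathbf{r})T_2(0,\mathbf{r},\mathbf{s})\pmod{I_3}$. Subtracting half the second relation from the first, $[T(\mathbf{s}),W_\mathbf{r}]\equiv(B\mathbf{s}|\mathbf{r})[T_2(0,\mathbf{s},\mathbf{r})-T_2(0,\mathbf{r},\mathbf{s})]=0\pmod{I_3}$ by the symmetry in Lemma \ref{l4.4}(1). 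Since $T$ is spanned by the $T(\mathbf{s})$'s and $I_3$ is itself an ideal (Lemma \ref{l4.4}(7)), decomposing any $Y\in\ker\psi_B$ as $Y=\sum_i c_iW_{\mathbf{r}_i}+Z$ with $Z\in I_3$ gives $[X,Y]\in I_3$ for every $X\in T$, as required. I expect the only non-routine step to be the isomorphism $\bar\psi_B|_{I_2/I_3}\cong\mf{G}_B$ in the middle paragraph; once that structural fact is established, the final cancellation is essentially a two-line computation.
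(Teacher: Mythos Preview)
Your proof is correct, but it takes a different route from the paper's. The paper argues directly: given $X=\sum a_{\mathbf r}T(\mathbf r)\in\ker\psi_B$, the vanishing of $\sum a_{\mathbf r}\mathbf r(B\mathbf r)^T$ applied to an arbitrary $\mathbf k$ yields the scalar identities $\sum_{\mathbf r} a_{\mathbf r}(B\mathbf r\,|\,\mathbf k)r_i=0$ for all $i$; then one simply expands $[T(\mathbf k),X]=\sum a_{\mathbf r}(B\mathbf k\,|\,\mathbf r)T_2(0,\mathbf k,\mathbf r)$, uses the linearity of $T_2(0,\mathbf k,\cdot)$ modulo $I_3$ (Lemma~\ref{l4.5}(3)) to write this as $\sum_i\big(\sum_{\mathbf r} a_{\mathbf r}(B\mathbf k\,|\,\mathbf r)r_i\big)T_2(0,\mathbf k,\mathbf e_i)$, and invokes the scalar identities (with skew--symmetry of $B$) to conclude this lies in $I_3$.

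Your argument instead builds the splitting $T/I_3=(I_2/I_3)\oplus\ker\bar\psi_B$ via the isomorphism $\bar\psi_B|_{I_2/I_3}\cong\mf G_B$, exhibits explicit spanning elements $W_{\mathbf r}=T(\mathbf r)-\tfrac12 T_2(0,\mathbf r,\mathbf r)$ of the kernel, and checks centrality on those. The paper's approach is shorter and avoids any dimension bookkeeping; your approach is more structural and yields extra information---in particular it pins down $\dim(I_2/I_3)=2m^2+m$ (via the surjection from $S^2(\mathbb C^{2m})$ together with surjectivity onto $\mf G_B$), which is actually sharper than the basis claimed in the proof of Lemma~\ref{l4.8}. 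Both methods rest on the same two ingredients, namely linearity of $T_2$ modulo $I_3$ and the bracket identity $[T(\mathbf s),T(\mathbf r)]=(B\mathbf s\,|\,\mathbf r)T_2(0,\mathbf s,\mathbf r)$; they just organize the cancellation differently.
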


\begin{proof}
Since kernel of a Lie algebra homomorphism is an ideal, so  $ker\psi_B$ is an ideal. Moreover, it is easy to see that $I_3 \subseteq ker\psi_B$. Now let $X=\dis{\sum a_{\bf r}T({\bf r})} \in ker \psi_B$. Then $\dis{\sum a_{\bf r}{\bf r}(B{\bf r})^T}=0.$ Hence for any ${\bf k}=(k_1, \dots , k_N)$ we have, $\dis{\sum a_{\bf r}{\bf r}(B{\bf r})^T}{\bf k}=\dis{\sum a_{\bf r}(B{\bf r}|{\bf k}){\bf r}}=0$. This implies that $\dis{\sum a_{\bf r}(B{\bf r}|{\bf k}){r_i}}=0,$ for $1\leq i \leq N$. Now consider\\

$[T({\bf k}),X]= \dis{\sum a_{\bf r}[T({\bf k})},T({\bf r})]= \dis{\sum a_{\bf r}} (B{\bf k}|{\bf r})[T({\bf r +\bf k})-T({\bf k })-T({\bf r})]$ \\

$
=\dis{\sum a_{\bf r}} (B{\bf k}|{\bf r})T_2(\mbf 0,{\bf k, \bf r})=\dis{\sum a_{\bf r}} (B{\bf k}|{\bf r})\dis{\sum_{i}^{N}}r_iT_2{(\mbf 0,\mbf k,}$ $ {{\mbf e_i}})=0,$ as $\dis{\sum a_{\bf r}(B{\bf r}|{\bf k}){r_i}}=0,$ for $1\leq i \leq N$. Note that the 4th equality follows from Lemma \ref{l4.5}(3) and the fact that $I_3 \subseteq ker\psi_B$.

\end{proof}
Note that dimension of $ker \psi_B /I_3 =2m^2+m$, which follows from Lemma \ref{l4.8} and the fact that dimension of $\mf {sp}_{2m}=2m^2+m$. 
Now accumulating the results of Proposition \ref{p4.1} and Lemmas \ref{l4.9},\ref{l4.10},\ref{l4.11} we have proved the Theorem \ref{t4.1}.

\section{Level zero modules for SSEALA}
In this section we classify simple integrable $\tau_B$-modules of level zero and with finite dimensional weight spaces with respect to $\til {\mf h}$. We have $\tau_B= \tau_B^-  \op \tau_B^0 \op \tau_B^+$, where\\
$\tau_B^+=span\{X_\al({\bf r}):\al \in \De^+, {\bf r} \in \Z^N\}$,\\
		$\tau_B^-=span\{X_\al({\bf r}):\al \in \De^-, {\bf r} \in \Z^N\}$, and\\
			$\tau_B^0=span\{h({\bf r}),K(B{\bf r},{\bf r}),K({\bf u,0}),D({\bf u,0}),D(B{\bf{r}, r}):h \in \mf{h}, {\bf r} \in \Z^N , \bf u \in$ $ \C^N\}$.
			Throughout this section fix an irreducible integrable module $V$ for $\tau_B$ with finite dimensional weight spaces with respect to $\wtil{\mf h}$.
\begin{theorem}\label{t5.1}
	Let $V$ be an irreducible integrable module for $\tau_B$ with finite dimensional weight spaces with respect to $\wtil{\mf h}$. Suppose that all $K_i$, $ 1\leq i \leq N $ act trivially on $V$. Then there exists a weight vector $v_0$ such that ${\tau_B}^{+}. v_0=0 $.
\end{theorem}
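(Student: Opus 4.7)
The plan is to produce a weight $\lambda_0 \in P(V)$ whose finite part $\bar\lambda_0 \in \mf{h}^*$ is maximal in the dominance order induced by $\De^+$, and then to argue by weight-space reasoning that any nonzero $v_0 \in V_{\lambda_0}$ is annihilated by $\tau_B^+$. The hypothesis that each $K_i$ acts trivially is essential: it forces every $\lambda \in P(V)$ to decompose uniquely as $\lambda = \bar\lambda + \de_{\bf g}$ with $\bar\lambda \in \mf{h}^*$ and ${\bf g} \in \C^N$, so that the projection $\bar P := \{\bar\lambda : \lambda \in P(V)\}$ is a well-defined subset of $\mf{h}^*$.

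The key step is to show that $\bar P$ is finite. I would follow the Chari-type argument adapted to the SSEALA setting, paralleling the corresponding step in \cite{RH} for HEALA. By Lemma \ref{l2.1}, $P(V)$ is $\Omega$-stable, and under level zero each reflection $r_{\al+\de_{\bf r}}$ sends $\bar\lambda + \de_{\bf g}$ to $r_\al(\bar\lambda) + \de_{{\bf g} - \bar\lambda(\al^\vee){\bf r}}$, so $\bar P$ is stable under the finite Weyl group of $\mf g$. Integrability makes each $\mathfrak{sl}_2$-triple $(X_\al({\bf r}), X_{-\al}(-{\bf r}), \al^\vee)$ (the centre acts as zero by the level-zero assumption) act integrably on $V$, forcing $\bar\lambda(\al^\vee) \in \Z$ at each weight. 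Combining Weyl invariance, finite-dimensionality of weight spaces, and the presence of translations inside $\Omega$ arising from compositions of reflections at different grades produces a uniform bound on $\bar\lambda(\al^\vee)$ across $\bar P$, so $\bar P$ is a bounded subset of the weight lattice of $\mf g$ and hence finite.

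Granted finiteness, I would choose $\bar\lambda_0 \in \bar P$ that is maximal in the dominance order, so that $\bar\lambda_0 + \al \notin \bar P$ for every $\al \in \De^+$. Pick any $\lambda_0 \in P(V)$ with finite part $\bar\lambda_0$ and any nonzero $v_0 \in V_{\lambda_0}$. For $\al \in \De^+$, $X_\al \in \mf g_\al$ and ${\bf r} \in \Z^N$, the vector $X_\al({\bf r}) v_0$ lies in $V_{\lambda_0 + \al + \de_{\bf r}}$, whose finite part is $\bar\lambda_0 + \al \notin \bar P$, so the space is zero. Hence $X_\al({\bf r}) v_0 = 0$ for all such $\al$ and ${\bf r}$, giving $\tau_B^+ \cdot v_0 = 0$.

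The hard part is the uniform boundedness step in the finiteness proof. Pointwise integrability yields only $\bar\lambda(\al^\vee) \in \Z$ with finite $\mathfrak{sl}_2$-string length at each individual weight vector. Promoting this to a bound uniform over $\bar P$ requires exploiting translations in the finite root lattice (which arise inside $\Omega$ as compositions of reflections $r_{\al + \de_{\bf r}}$ for varying ${\bf r}$) together with finite multiplicity of the weight spaces, in the manner of Chari's original argument for affine Lie algebras; this is the only nontrivial input, after which the conclusion is a routine weight-space argument.
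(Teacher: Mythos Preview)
Your proposal is correct and follows essentially the same route as the paper: both rest on the Chari-type input (the paper citing \cite{R04}) that integrability, finite multiplicities, and trivial $K_i$-action bound the $\mathfrak h$-restrictions of the weights, after which a dominance-maximal weight does the job. The only cosmetic difference is that you conclude in one line from finiteness of $\bar P$, whereas the paper first produces a $\mu$ with $V_{\mu+\alpha}=0$ for all $\alpha\in\Delta^+$ and then runs a short explicit Weyl-reflection computation via Lemma~\ref{l2.1}(3), using the real root $\alpha+\delta_{\bf r+s}$, to extend this vanishing to all grades $\delta_{\bf m}$.
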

\begin{proof}
First we apply the similar argument as in \cite{R04} to find a weight $\mu$ of $V$ such that $V_{\mu +\al}=0$ for all $\al \in \De^+$. We claim that $V_{\mu +\al +\de_{\bf m}}=0$ for all $\al \in \De^+, {\bf m}\in \Z^N$. If not, suppose $V_{\mu +\al +\de_{\bf r}}\neq 0$ for some $\be \in \De^+, {\bf r}\in \Z^N$.\\ 
{\bf Claim :} $V_{\la +\be +\de_{\bf s}}=0$ for all $\be \in \De^+, {\bf s}\in \Z^N$, $\la =\mu +\al+\de_{\bf r}$. Suppose this is also not true, then $V_{\la +\be +\de_{\bf s}} \neq 0$ for some $\be \in \De^+, {\bf s}\in \Z^N$. Since $\al , \be \in \De^+$ either $<\al +\be, \al> >0$ or $<\al +\be , \be> > 0$. Let  $<\al +\be, \al> >0$. Then we have $<\la +\be +\de_{\bf s},\al+\de_{\bf s}+\de_{\bf r}>=<\mu +\al+\be,\al> > 0$. Hence by Lemma \ref{l2.1} $V_{\mu +\be} \neq 0$, a contradiction. This completes the proof.	
\end{proof}
	Let $V^+=\{v \in V: \tau_B^+.v=0\}$. Then by Theorem \ref{t5.1} we have $V^+ \neq 0$. It is standard to prove that $V^+$ is an irreducible $\tau_B^0$-module (for a proof one can see the claim in the proof of Theorem 5.2 of \cite{RT}).  Note that $\mf h$ is central in $\tau_B^0$ and hence acts by a single linear functional (say $\bar \la$) on $V^+$. By Lemma \ref{l2.1} and Theorem \ref{t5.1}, $\bar \la$ is a dominant integral weight. Now fix a weight vector $v \in V^+$ of weight $\la$. Then all the weights of $V^+$ are lies in the set $ \{ \bar \la + \de_{\bf r}+\dis{\sum_{i=1}^{N}g_i\de_i}: {\bf r} \in \Z^N \}$ and some ${\bf g}=(g_1,\dots,g_N) \in \C^N$.
	\begin{remark}\label{r5.1}
		We assume that  $\mf g \ot \mcal A$ acts non-trivially on $V$. It is easy to see that this condition is equivalent to say that $\bar \la \neq 0$. If $\mf g \ot \mcal A$ acts trivially on $V$, then $V$ becomes an irreducible module for $\wtil H_B$ and we have no classification result for irreducible modules of $\wtil H_B$.
		
	\end{remark}
Before going to next proposition we define a uniformly bounded vector space. We call a $\Z^N$ graded vector space uniformly bounded if dimension of its components are bounded by a fixed natural number.

\begin{proposition}\label{p5.1}

		(1) Weight spaces of $V^+$ are uniformly bounded. \\
		(2) $\wtil{\mcal Z}$ acts trivially on $V^+$.

\end{proposition}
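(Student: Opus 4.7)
The plan is to prove the two parts in sequence: part (1) by an explicit Weyl-group reflection argument using the assumption that the $K_i$ act trivially, and part (2) by combining (1) with the Heisenberg-type relations that $\mf h\ot A$ induces on $V^+$ through $\wtil Z$.

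For part (1), every weight of $V^+$ has the form $\mu_{\bf r}=\bar\la+\de_{\bf r}+\sum_i g_i\de_i$ for some ${\bf r}\in\Z^N$. Since $\bar\la\neq 0$ is dominant integral (Remark \ref{r5.1}), there is a simple root $\al$ of $\mf g$ with $k:=\bar\la(\al^\vee)>0$. For any ${\bf m}\in\Z^N$ the real root $\gamma=\al+\de_{\bf m}$ has coroot $\al^\vee+\frac{2}{<\al,\al>}\sum m_iK_i$, which agrees with $\al^\vee$ on $V$ because every $K_i$ acts trivially. Hence $\mu_{\bf r}(\gamma^\vee)=k$ and
\[ r_\gamma(\mu_{\bf r})=\bar\la-k\al+\de_{{\bf r}-k{\bf m}}+\sum_i g_i\de_i.\]
For each ${\bf r}\in\Z^N$ one can choose ${\bf m}\in\Z^N$ so that ${\bf r}-k{\bf m}\in\{0,1,\dots,k-1\}^N$, and Lemma \ref{l2.1}(2) then gives
\[ \dim V^+_{\mu_{\bf r}}\le\dim V_{\mu_{\bf r}}=\dim V_{r_\gamma(\mu_{\bf r})}\le\max_{{\bf s}\in\{0,\dots,k-1\}^N}\dim V_{\bar\la-k\al+\de_{\bf s}+\sum_i g_i\de_i},\]
a finite maximum of finite-dimensional weight spaces, which is the required uniform bound.

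For part (2), the hypothesis $K_i=0$ on $V\supset V^+$ handles $(\wtil Z)_0$, so it remains to show $K(B{\bf r},{\bf r})$ acts trivially on $V^+$ for every ${\bf r}\neq 0$. Both $\mf h\ot A$ and $\wtil Z$ preserve $V^+$: for $h\in\mf h$, $\al\in\De^+$ and $v\in V^+$, one has $X_\al({\bf s})h({\bf r})v=-\al(h)X_\al({\bf r+s})v=0$; and $\wtil Z$ commutes with $\mf g\ot A$ inside $\tau_B$, so $\wtil Z\cdot V^+\subset V^+$. The bracket $[h({\bf r}),h'({\bf s})]=<h,h'>K({\bf r,r+s})$ then shows that $\mf h\ot A$ acts on $V^+$ as a graded Heisenberg-type algebra with central values in $\wtil Z$; by Proposition \ref{p3.1}(2), $K({\bf r,r+s})$ is, modulo $K_B$, a non-zero multiple of the generator of $(\wtil Z)_{{\bf r+s}}$ precisely when $(B{\bf r}|{\bf s})\neq 0$.

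To finish I plan to argue by contradiction. Suppose $K(B{\bf r}_0,{\bf r}_0)$ acts non-trivially on $V^+$ for some ${\bf r}_0\neq 0$. By non-degeneracy of $B$ there exist ${\bf r},{\bf s}\in\Z^N$ with ${\bf r}+{\bf s}={\bf r}_0$ and $(B{\bf r}|{\bf s})\neq 0$, so choosing $h,h'\in\mf h$ with $<h,h'>\neq 0$ produces a commutator $[h({\bf r}),h'({\bf s})]$ acting as a non-zero scalar multiple of $K(B{\bf r}_0,{\bf r}_0)$. Combining this non-trivial Heisenberg relation with the integrable $\mf{sl}_2$-action from the real roots $\al+\de_{\bf m}$, with the irreducibility of $V^+$ as a $\tau_B^0$-module, and with the uniform bound from part (1), one should force the weight multiplicities of $V^+$ to grow without bound along the $\de_{{\bf r}_0}$-direction, yielding the required contradiction. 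The main obstacle is this last step: making rigorous how a non-trivial central action of the commutative family $\wtil Z$ is incompatible with the uniform boundedness in the multi-graded setting. Once this step is carried out, both claims are proved.
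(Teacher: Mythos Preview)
Your argument for part (1) is correct and is essentially the same Weyl-reflection argument the paper uses (the paper phrases it via the Chari--Greenstein trick of \cite{CP}, but the content is identical: since every $K_i$ acts as zero, a single real-root reflection $r_{\al+\de_{\bf m}}$ moves $\de_{\bf r}$ to $\de_{{\bf r}-k{\bf m}}$ without changing the $\mf h$-pairing, and one reduces to a finite box).

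For part (2) there is a genuine gap, and you have already put your finger on it. You propose to derive a contradiction from a non-trivial graded central action of $\wtil Z$ on the uniformly bounded module $V^+$, but you do not supply the mechanism that converts ``$[h({\bf r}),h'({\bf s})]$ acts non-trivially'' into ``multiplicities grow''. This step is not routine: the $K(B{\bf r},{\bf r})$ are \emph{graded} central elements, not scalars, so what you have is not a Heisenberg algebra in the usual sense, and there is no off-the-shelf lemma saying a bounded $\Z^N$-graded module forces such elements to zero. Attempting to restrict to a one-parameter direction $\Z{\bf r}_0$ does not help directly, because $K(B{\bf r}_0,n{\bf r}_0)$ for different $n$ are independent elements of $\wtil Z$ rather than powers of a single central charge.

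The paper avoids this difficulty entirely by a reduction to known results. It passes from $\tau_B$ to the (quotient of the) ordinary toroidal Lie algebra $\tau_N=\mf g\ot A\oplus\wtil Z\oplus\bigoplus_i\C d_i$, uses the finiteness of the box $M=\bigoplus_{0\le s_i<\la^0}V^+_{\la_{\bf s}}$ to find a $\tau_N$-submodule $V_{min}$ of $V$ that is minimal among those meeting $V^+$, takes its unique irreducible quotient $\wtil V$, and then invokes the classification in \cite{RT} to conclude $\wtil Z\cdot\wtil V=0$. This gives $\wtil Z\cdot(V^+\cap V_{min})=0$, whence $\{v\in V^+:\wtil Z v=0\}$ is a non-zero $\tau_B^0$-submodule of the irreducible $V^+$, forcing $\wtil Z\cdot V^+=0$. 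The key input you are missing is precisely this appeal to \cite{RT}, which already contains the hard work of showing the toroidal center dies on integrable level-zero modules; trying to redo that argument by hand inside $V^+$ is what makes your last step intractable.
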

\begin{proof}
Let $\la$ be a weight of $V^+$. Then we know that all the weights are given by $\la_{\bf r}= \bar \la + \de_{\bf r}+\dis{\sum_{i=1}^{N}g_i\de_i},$ for all ${\bf r} \in \Z^N$. Define $\la^0=min\{\la(h): \la(h)>0\}$. Note that by Remark \ref{r5.1} we get $\la^0 \in \N.$ Now using similar argument as in [\cite{CP}, Lemma 3.1] with the fact that $K_i$ acting trivially we have, for any $\la_{\bf r}$ there exists a $\omega_r \in \widetilde W$ such that $\omega_{\bf r}(\la_{\bf r})=\la_{\bf s}$ and $0 \leq s_i < \la^0$ for $1 \leq i \leq N$. Therefore Lemma \ref{l2.1} suggests that dimensions of weight spaces of $V^+$ are bounded by max$\{dim V^+_{\la_{\bf s}}: 0\leq s_i < \la^0\},$ this proves (1).\\
Let $M=\dis{\bigoplus_{0 \leq s_i <\la^0 }V^+_{\la_{\bf s}}}$ and $\tau_N=\mf g\ot \mcal A\oplus \wtil { \mcal Z} \op  \dis{\bigoplus_{i=1}^{N}}\C d_i$. Note that $\tau_N$ is the quotient of $N$-variable toroidal Lie algebra. It is also clear from the irreducibility of $V$ that any $\tau_N$ submodule of $V$ intersecting $V^+$ non-trivially is generated by elements of $V^+$. Hence by Lemma \ref{l2.1} any such submodule is generated by a subset of $M$. Let $W_1 \supset W_2 \supset \dots,$ be a strictly decreasing sequence of $\tau_N$-submodules of $V$ intersecting $V^+$ non-trivially. As the dimension of $M$ is finite, this sequence must terminate. Therefore there exists a minimal submodule, say $V_{min}$ such that $V^+\cap V_{min} \neq 0$. This $V_{min}$ may not be irreducible. But we can find an unique irreducible quotient of $V_{min}$, say $\wtil V$. Then by \cite{RT} we have $\wtil {\mcal Z}$ acts trivially on $\wtil V$. This implies that $\wtil {\mcal Z}.(V^+\cap V_{min})=0$. Now it is easy to see that $W=\{v \in V^+:\wtil {\mcal Z}.v=0\}$ is a non zero $\tau_B^0$ submodule of $V^+,$ hence $W=V^+$.
\end{proof}

 Let $\alpha \in \De^+$ such that $\la(h_\al)\neq 0$. Then by results of \cite{CP2} it follows that $h_\al \ot t^{\bf r}$ acts trivially on $V^+$ for all ${\bf r} \neq \mbf 0$ if and only if $\la(h_\al)=0$. Therefore $\la(h_\al)\neq 0$ implies that $h_\al \ot t^{\bf r}$ acts non-trivially on $V^+$ for some ${\bf r}\neq \mbf 0$. Now it follows from Appendix that $h_\al \ot t^{\bf r}$ acts non-trivially on $V^+$ for all ${\bf r}\neq \mbf 0$. Now we have $V^+$ is an irreducible module for $\wtil H_B \ltimes (\mf h\ot \mcal A)$. Here we treat $\mf h \ot \mcal A$ as finitely many copies of $\mcal A$. In the appendix we have taken only one copy of $\mcal A$ but the arguments go through for finitely many copies of $\mcal A$ (we need to use the fact that $\mf h \ot  \mcal A$ is commutative). Thus from appendix we have $\la(h_\al) \neq 0$ on $V^+$. Hence by Theorem \ref{t6.1} we find non-zero scalars $\la_\al, \mu_\al$ such that on $V^+$ it satisfy the conditions\\
\begin{align}
	h_\al \ot t^{\bf r}.h_\al \ot t^{\bf s}=\la_\al h_\al \ot t^{\bf r+s}, \, \forall \, {\bf r,s, r+s} \notin \{ \mbf 0\},\\
		h_\al \ot t^{\bf r}.h_\al \ot t^{\bf -r}=\la(h_\al) \mu_\al, \forall \, {\bf r} \neq \mbf 0
\end{align}
and $ \la_\al^2=\mu_\al\la(h_\al).$
Then from Lemma 7.5 of \cite{RH} it follows that $\la_\al =\mu_\al=c=\la(h_\al)$ for all $\al \in \De^+$ such that $\la(h_\al)\neq 0$ [In Lemma 7.5 of \cite{RH} we get the equation $\dis{\sum_{i}\la_i(h_\al)a_i^s}=\la_\al,$ where LHS is independent of basis of $V^+$ and RHS is independent of s. But by the above equation both are independent of basis and s. Thus arguing as in Lemma 7.5 of \cite{RH} we see that $\mu_\al=\la_\al=\la(h_\al)=c$].\\
We know from Proposition \ref{p5.1} that $\wtil {\mcal Z}.V^+=0$. Now consider $W=\{v \in V: \wtil {\mcal Z}.v=0\}$ and observe that $W$ is a non-zero $\tau_B^0$ submodule of $V$. Therefore by irreducibility of $V$, $\wtil {\mcal Z} $ acts trivially on $V$. Hence we have a nice description of $V^+$ from Theorem \ref{t4.1}, given by $V^+ \simeq P\ot A $, where $P$ is a finite dimensional irreducible $\mf{sp}_{2m} $ module.
\begin{theorem}
	Let $V$ be an irreducible integrable level zero module for $\tau_B$ having finite dimensional weight spaces with respect to $\wtil h$ and $\mf g \ot \mcal A$ acts non-trivially on $V$. Then there exists a finite dimensional irreducible module $V(\mu)$ for $\mf g$ and a finite dimensional irreducible module $P$ for $\mf{sp}_{N}$ such that $V\simeq V(\mu)\ot P \ot \mcal A$. The action of $\tau_B$ on $ V(\mu)\ot P \ot \mcal A$ given by following:\\
	$X({\bf r}).(v\ot w\ot t^{\bf s})=X.v\ot w\ot t^{\bf r+s}$, for all $X \in \mf g$, $\bf r,s$$ \in \Z^N,$ $v \in V(\mu), w \in P$,\\
	 $\wtil {\mcal Z}$ acts trivially on $V$ and action of $\wtil {H_B}$ is given by (4.3) and (4.4).
\end{theorem}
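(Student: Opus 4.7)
The plan is to assemble the ingredients already prepared in Sections~4 and~5. Two reductions have effectively been completed: $\wtil Z$ acts trivially on all of $V$ (argued via the $\tau_B$-submodule $\{v \in V : \wtil Z\cdot v = 0\}$), and $V^+ \cong V_N \otimes A$ as a $\wtil H_B \ltimes A$-module, where $V_N$ is a finite-dimensional irreducible $\mf{sp}_N$-module (Theorem~\ref{t4.1} applied using the evaluation relations $h_\al({\bf r})\cdot h_\al({\bf s}) = \la(h_\al)\,h_\al({\bf r+s})$).

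Next I would construct the candidate module $M := V(\mu) \otimes V_N \otimes A$ with $\mu := \bar\la$, equipped with the $\tau_B$-action as specified. Well-definedness amounts to three bracket checks: (i) $[X({\bf r}), Y({\bf s})]$ reduces to $[X,Y]({\bf r+s})$ on $M$ because $\wtil Z$ acts trivially and wipes out the central cocycle $\langle X, Y\rangle K({\bf r},{\bf r+s})$; (ii) $[D(B{\bf r}, {\bf r}), X({\bf s})] = (B{\bf r}|{\bf s})\,X({\bf r+s})$ on $M$ follows from formula~(\ref{a4.3}) via a direct computation in which the $T({\bf r})$-pieces and the $\bs\be$-pieces cancel, leaving exactly the $(B{\bf r}|{\bf s})$ term; and (iii) the internal $\wtil H_B$-bracket holds by Theorem~\ref{t4.1}. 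Integrability of $M$ is inherited from that of $V(\mu)$ together with the shift action on $A$.

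Then I would construct a $\tau_B$-module homomorphism $\Phi : M \to V$. Observe that $M^{+} := \{m \in M : \tau_B^+ \cdot m = 0\}$ is precisely $v_\mu \otimes V_N \otimes A$, which is isomorphic to $V^+ \cong V_N \otimes A$ as $\wtil H_B \ltimes A$-modules and carries the same $\mf h$-weight $\bar\la = \mu$. Using the universal property of the cyclic $\tau_B$-module generated by a $\tau_B^+$-annihilated weight space (equivalently, a PBW-filtration argument with $\tau_B = \tau_B^- \oplus \tau_B^0 \oplus \tau_B^+$), this identification extends uniquely to a $\tau_B$-map $\Phi : M \to V$, which is surjective by irreducibility of $V$. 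For injectivity I would show $M$ is irreducible: any nonzero $\tau_B$-submodule $M' \subseteq M$ contains a $\tau_B^+$-highest-weight vector by (the argument of) Theorem~\ref{t5.1} applied to $M$, hence intersects $v_\mu \otimes V_N \otimes A$ nontrivially; this intersection is $\wtil H_B \ltimes A$-invariant, so Theorem~\ref{t4.1} forces equality with all of $v_\mu \otimes V_N \otimes A$, which in turn generates $M$ under $\tau_B^-$.

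The main obstacle will be the well-definedness of $\Phi$, which reduces to showing that the loop-algebra action of $\mf g \otimes A$ on $V$ genuinely decouples into the evaluation-style tensor product $V(\mu) \otimes (\text{shift on } A)$. This decoupling rests on the evaluation-type relations $h_\al({\bf r}) \cdot h_\al({\bf s}) = \la(h_\al)\, h_\al({\bf r+s})$, established in the paragraph just before the theorem via the Appendix and Lemma~7.5 of~\cite{RH}; they rule out any nontrivial twisting between the $\mf g$-factor and the polynomial factor, and thereby guarantee that every relation holding on $v_\mu \otimes \xi \otimes 1$ in $M$ also holds on the corresponding vector of $V^+$.
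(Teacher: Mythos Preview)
Your approach is essentially the paper's: construct $M = V(\mu)\otimes V_N\otimes A$, check it is an irreducible $\tau_B$-module, identify its highest weight space $\C v_\mu \otimes V_N \otimes A$ with $V^+$ as $\tau_B^0$-modules, and conclude $M\cong V$. The paper's proof is two sentences and leaves the verifications you spell out (well-definedness of the action, irreducibility of $M$) as ``easy to see''.

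One comment on your final paragraph: the ``main obstacle'' you flag --- verifying that the relations presenting $M$ already hold in $V$ so that $\Phi:M\to V$ is well defined --- is not actually needed. Once you have shown that $M$ is irreducible (your third paragraph) and that $M^+\cong V^+$ as $\tau_B^0$-modules, both $M$ and $V$ are irreducible quotients of the induced module $U(\tau_B)\otimes_{U(\tau_B^0\oplus\tau_B^+)}V^+$; since such an induced module has a unique irreducible quotient (the one by the maximal submodule not meeting the top), $M\cong V$ automatically. The evaluation relations $h_\al({\bf r})h_\al({\bf s})=\la(h_\al)\,h_\al({\bf r+s})$ are used \emph{before} this theorem, to turn $V^+$ into a jet module and thereby invoke Theorem~\ref{t4.1}; they play no further role in matching $M$ with $V$. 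This is precisely why the paper can dispatch the final step in one line.
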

\begin{proof}
	Note that the highest weight is dominant integral for $\mf g$, hence we can find $V(\mu)$. Then it is easy to see that $V(\mu)\ot P \ot \mcal A$ is an irreducible module for $\tau_B$. Observe that $\C v_\mu \ot P \ot \mcal A$ is the highest weight space for $\tau_B$-module $V(\mu)\ot P \ot \mcal A$ with respect to the considered triangular decomposition, where $v_\mu$ is the highest weight vector of $V(\mu)$. Now since highest weight space of $V$ and $V(\mu)\ot P \ot \mcal A$ are isomorphic as $\tau_B^0$-module. Hence $V \simeq V(\mu)\ot P \ot \mcal A$. For more clarity on induced modules and its unique irreducible quotient see subsection 6.1.
\end{proof}
\begin{remark}
Let us consider the matrix $J'=\begin{pmatrix}
	J_m &0_{2m,1} \\
	0_{1, 2m} & 0 \\ 
\end{pmatrix} $.
From Section 3 we see that $\tau_{J'} \simeq \tau_{\bar J}$. Though $J'$ looks much simpler than $\bar J$, it does not simplify the above arguments any further for $\bar J$.	
\end{remark}

\section{Non Zero level modules for KEALA}
In this section we classify non zero level simple integrable modules for the KEALA constructed in \cite{RH} and obtained in this paper when $B=\bar J$, i.e $\tau_{\bar J}$. In this section we assume that $N=2m+1$ and abbreviate $ \underline{\bf r}= \bar J{\bf r}=(r_{m+1}+r_N,\dots,r_{2m}+r_N,-r_1+r_{N},\dots,-r_m+r_N,-\dis{\sum_{i=1}^{2m}r_i})$. Now consider the triangular decomposition of $\tau_{\bar J}$ given by
\begin{align}
	\tau_{\bar J}^+=span \{\mf g^+ \ot t^{\bf m},\mf g \ot t^{\bf r},K(\underline{\bf s},{\bf s}),D(\underline{\bf s},{\bf s}):m_N=0,r_N,s_N>0, {\bf m,r,s} \in \Z^N, {\mbf s} \notin  G_{\bar J}\}\\
		\tau_{\bar J}^0=             span \{\mf h \ot t^{\bf m},K(\underline{\bf s},{\bf s}),D(\underline{\bf s},{\bf s}):m_N=0,s_N=0, {\bf m,s} \in \Z^N, {\bf s} \notin G_{\bar J}\}  \op \dis{\bigoplus_{1 \leq i \leq N} \C K_i}   \op \mcal D , \\
	\tau_{\bar J}^-=span \{\mf g^- \ot t^{\bf m},\mf g \ot t^{\bf r},K(\underline{\bf s},{\bf s}),D(\underline{\bf s},{\bf s}):m_N=0,r_N,s_N<0, {\bf m,r,s} \in \Z^N, {\bf s} \notin G_{\bar J}\},
	\end{align}
where $\mf g=\mf g^-\op \mf h \op \mf g^+$ is the usual triangular decomposition of $\mf g$. Let $V$ be an irreducible integrable non zero level (i.e not all $K_i$ acting trivially on $V$) module for $\tau_{\bar J}$ with finite dimensional weight spaces with respect to $\wtil{\mf h}$. Then by standard argument we can assume that after twisting the module by an automorphism (\ref{a2.5}) we have only $K_N$ acting non trivially and all other $K_i$ acting trivially on $V$. Now similar to the proof of Proposition 2.4 of \cite{RT} (also see Theorem 2.1 of \cite{RJ}) we have the following theorem. 
\begin{theorem}\label{tn6.1}
	Let $V$ be an irreducible integrable non zero level  module for $\tau_{\bar J}$ with finite dimensional weight spaces with respect to $\wtil{\mf h}$. Then there exists a weight $\mu$ of $V$ such that $\tau_{\bar J}^+.V_{\mu}=0$.
\end{theorem}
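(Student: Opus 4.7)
The plan is to adapt the argument of Proposition 2.4 of \cite{RT} and Theorem 2.1 of \cite{RJ} to the triangular decomposition (6.1)--(6.3) of $\tau_{J_1}$. After twisting $V$ by a suitable automorphism $\Phi_F$ from (\ref{a2.5}), we may assume only $K_N$ acts non-trivially on $V$, with fixed level $c_N$, which we may take positive. Writing a weight as $\la = \bar{\la} + \sum_{i=1}^{N} g_i\de_i + c_N\omega_N$, the strategy is to locate a weight $\mu$ that is maximal first in the $g_N$-coordinate (the vertical direction) and then to refine the choice in the horizontal directions compatible with the triangular decomposition.

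The first step is to show that $\{\la(d_N) : \la \in P(V)\}$ is bounded above. This is where the non-zero level assumption is crucial: restricted to the affine Kac--Moody subalgebra $L_N = \mf g \ot \C[t_N^{\pm 1}] \op \C K_N \op \C d_N$ of $\tau_{J_1}$, $V$ is integrable of positive level $c_N$, so standard affine theory together with the finite-dimensional weight-space hypothesis and Lemma \ref{l2.1} forces an upper bound $M := \sup\{\la(d_N) : \la \in P(V)\}$ which is attained. Set $P_M = \{\la \in P(V) : \la(d_N) = M\}$. For any $\mu \in P_M$, every basis element of $\tau_{J_1}^+$ whose weight has strictly positive $\de_N$-component would push $\mu(d_N)$ above $M$ and hence annihilate $V_\mu$. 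Inspecting (6.1), this handles all contributions from $\mf g \ot t^{\bf r}$ with $r_N > 0$ and from $K(\underline{\bf s}, {\bf s}), D(\underline{\bf s}, {\bf s})$ with $s_N > 0$.

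The remaining, and technically most delicate, step is to refine the choice of $\mu \in P_M$ so that $\mf g^+ \ot t^{\bf m}$ with $m_N = 0$ also annihilates $V_\mu$. These horizontal positive root vectors have weights $\al + \de_{\bf m}$ with $\al \in \De^+$ and $m_N = 0$, which preserve $g_N$, so the argument must now be carried out inside the slab $P_M$. Since all $K_i$ with $i < N$ act trivially, the coroot of $\al + \de_{\bf m}$ with $m_N = 0$ reduces to $\al^\vee$, and by applying the finite Weyl group of $\mf g$ we may arrange that $\bar{\la}$ is dominant; Lemma \ref{l2.1}(3) then shows that $\la + \al + \de_{\bf m} \in P(V)$ forces $\la + \de_{\bf m} \in P(V)$. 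A maximality argument, following \cite{RT, RJ}, singles out $\mu \in P_M$ also killing the horizontal positive root vectors. The main obstacle is controlling the a priori infinite set $\{{\bf m} : m_N = 0,\ \mu + \de_{\bf m} \in P(V)\}$, which is handled by iterating the reflection argument together with the finite-dimensional weight-space hypothesis, in the spirit of Proposition 2.4 of \cite{RT}.
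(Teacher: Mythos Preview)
Your proposal is correct and takes essentially the same approach as the paper, which simply cites Proposition~2.4 of \cite{RT} and Theorem~2.1 of \cite{RJ} without further elaboration. Your two-step outline --- bounding $\la(d_N)$ above via the affine subalgebra in the $t_N$-direction using the positive level $c_N$, then refining within the maximal slab $P_M$ by the level-zero horizontal reflection argument (where the coroots of $\alpha+\de_{\bf m}$ with $m_N=0$ reduce to $\alpha^\vee$ since $K_1,\dots,K_{N-1}$ act trivially) --- is precisely the cited method, spelled out in more detail than the paper itself provides.
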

Let $W^+= \{v \in V:\tau_{\bar J}^+.v=0 \}$. Then by Theorem \ref{tn6.1}, $W^+ \neq 0$. By standard argument it follows that $W^+$ is an irreducible module for $\tau_{\bar J}^0$ with finite dimensional weight spaces. Let $\la$ be a weight of $W^+$. Then by irreducibility of $W^+$ we get weights of $W^+$ lies in the set  $ \{ \bar \la + \de_{\bf r}+\dis{\sum_{i=1}^{N}g_i\de_i}: {\bf r} \in \Z^{2m} \}$ for some ${\bf g}=(g_1,\dots,g_N) \in \C^N$ and $\bar \la =\la|_{\mf h}$.
\begin{lemma}\label{essential}
	The $t^{\bf m}K_N$ with $\mbf m=(m_1, \cdots , m_N) \in \Z^N \setminus \{ \mbf 0\}$, $m_N=0$ acts trivially on $W^+$.
\end{lemma}
\begin{proof}
	  First note that, when $r_N=0$ and $\dis{\sum_{i=1}^{N-1}r_i}=0$, then $K(\mbf e_N,{\bf r}) =0$ in $\tau_{\bar J}$. We have for $\bf r,s \in $ $\Z^N \setminus\{\mbf 0\}$ with  $r_N=s_N=0$,
	$$[D({\underline{\mbf s}},{\bf s}),K({\mbf e_N},{\bf r})]=(\underline{\mbf s}|{\mbf r})K({\mbf e_N},{\mbf r+ \mbf s}) +(\underline{\mbf s}|{\mbf e_N})K({\mbf s, \mbf r+ \mbf s}).$$
	Since $K({\bf s, r+s})$ acts trivially on $W^+$ by (2), hence we have $K(\mbf e_N,{\bf r+s})$ acts trivially on $W^+$, when $ (\underline{\bf s}|{\bf r}) \neq 0$. Now for  given ${\bf m} \in \Z^N$ with $m_N=0$ and  $\dis{\sum_{i=1}^{N-1}m_i}\neq 0$, it is easy to find ${\bf r,s} \in \Z^{N}$ such that  $r_N=s_N=0,$  $ (\underline{\bf s}|{\bf r}) \neq 0$, $\dis{\sum_{i=1}^{N-1}r_i}=0$ and ${\bf r+s}={\bf m}$. For example, let $a \in \Z \setminus \{0\}$, $\mbf r=a \dis{\sum_{i=1}^m}(\mbf e_i -\mbf e_{i+m})$ and set $\mbf s= \mbf m-\mbf r$. Then $(\underline{\mbf s}| \mbf r)= -a \dis{\sum_{i=1}^{N-1}}m_i \neq 0$. This completes the proof. 
\end{proof}
\begin{remark}\label{rn6.1}
	We assume that $\bar \la \neq 0$. Otherwise using $\mf{sl}_2$ theory and integrability of $V$ we can prove that $\mf h \ot \mcal A_{2m}$ acts trivially on $W^+$. Moreover, for any ${\bf r,s} \in \Z^N$ with $s_N=r_N=0$, we have $[h({\bf s}),h({\bf r})]=<h,h>K( {\bf r}, {\bf r+s}) $ acts trivially on $W^+$. Now consider $[D({\underline{\bf r}},{\bf r}),$ $K({\mbf e_N},{\bf s})]=(\underline{\bf r}|{\bf s})$ $K({\mbf e_N},{\bf r+s}) +(\underline{\bf r}| $${\mbf e_N})K({\bf r, r+s})$, for $r_N=s_N=0$. This implies that $K({\mbf e_N},{\bf r+s})$ acts trivially on $W^+$ for all $r_N=s_N=0$, when $(\underline{\bf r}|{\bf s})\neq 0$, with the help of Lemma 6.1. Now for any ${\bf k}\in \Z^N \setminus \{\mbf 0\}$ with $k_N=0$, it is easy to find ${\bf r, s} \in \Z^N$ such that $r_N=s_N=0$, $(\underline{\bf r}|{\bf s})\neq 0$  and ${\bf r+s =k}$. This proves that $W^+$ is an irreducible module for  $span\{D(\underline{\bf s},{\bf s}):s_N=0, {\bf s} \in \Z^N, {\bf s} \notin G_{\bar J}\} \op \mcal D$. We will prove (see Lemma \ref{ln6.1} below) that $span\{D(\underline{\bf s},{\bf s}):s_N=0, {\bf s} \in \Z^N, {\bf s} \notin G_{\bar J}\} $ is isomorphic to $H_{N-1}$ (see Example 2.3 for the construction of $H_{N-1}$) as a Lie algebra. Hence $W^+$ is an irreducible module for the Lie algebra $H_{N-1} \op \dis{\bigoplus_{i=1}^{N-1}}\C d_i$, as $d_N$ is central in $\tau_{\bar J}^0.$ But we don't have any classification theorem for $H_{N} \op \mcal D$ irreducible modules.
\end{remark} 

\begin{proposition}\label{pn6.1}

	(1) Weight spaces of $W^+$ are uniformly bounded. \\
	(2) $span \{t^{\mbf s}K_i :{\bf s }\in \Z^{N-1}, 1\leq i \leq N-1 \} \cap \wtil {\mcal Z}$ acts trivially on $W^+$.\\

\end{proposition}
\begin{proof}
	 By Remark \ref{rn6.1}, there exits $h \in \mf h$ such that $\bar{\la}(h) \neq 0$. Now by the fact that $\bar \la$ is dominant integral we have $\la^0=min\{\la(h): \la(h)>0\} > 0$. Now we can proceed similarly like Proposition \ref{p5.1}(1) to prove (1). To prove (2) we consider $M=\dis{\bigoplus_{\substack{ {\bf s}\in  \Z^{N-1}\\0 \leq s_i <\la^0 }}W^+_{\la_{\bf s}}}$, where $\la_{\bf s}$ are the weights of $W^+$. Also let $\tau_{N-1}=\mf g \ot \mcal A_{N-1} \op span \{t^{\bf s}K_i :{\bf s }\in \Z^{N-1}, 1\leq i \leq N-1 \} \cap \wtil {\mcal Z} \op \dis{\bigoplus_{i=1}^{N-1}}\C d_i $ be the quotient of $N-1$ variable toroidal Lie algebra. Now the proof follows similarly to \ref{p5.1}(2).

\end{proof}
\begin{lemma}\label{ln6.1}
Let	$\mcal S=span \{ D(\underline{\bf s},{\bf s}): s_N=0, {\bf s} \notin G_{\bar J}\}$. Then $\mcal S \simeq H_{N-1}$.
\end{lemma}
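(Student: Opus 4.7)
The plan is to exhibit an explicit Lie algebra isomorphism $\phi: S \to H_{N-1}$ induced by the projection ${\bf s} = (s_1, \ldots, s_{2m}, 0) \mapsto {\bf s'} = (s_1, \ldots, s_{2m})$ on indexing vectors. Directly from the formula for $J_1$, one sees that when $s_N = 0$ the first $2m$ coordinates of $\underline{\bf s} = J_1 {\bf s}$ form exactly $\overline{\bf s'} = (s_{m+1}, \ldots, s_{2m}, -s_1, \ldots, -s_m)$, which is the Hamiltonian symbol of Example 2.3, while the $N$-th coordinate is $-\sum_{i=1}^{2m} s_i'$. I therefore set
$$\phi\bigl(D(\underline{\bf s}, {\bf s})\bigr) := D(\overline{\bf s'}, {\bf s'}),$$
interpreted as a basis element of $H_{2m} = H_{N-1}$.

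First I would verify that this prescription is a linear bijection on the spanning sets. The condition ``$s_N = 0$ and ${\bf s} \notin G_{J_1}$'' matches ``${\bf s'} \neq 0$ in $\mathbb{Z}^{2m}$'': indeed when $s_N=0$, the equation $J_1{\bf s}=0$ reduces via its first $2m$ rows to $\overline{\bf s'}=0$, hence to ${\bf s'}=0$; the last row $\sum s_i' = 0$ then holds automatically. Hence ${\bf s}\mapsto {\bf s'}$ is a bijection between the spanning indices of $S$ and those of $H_{N-1}$, and $\phi$ extends linearly to a vector space isomorphism.

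The main check is bracket preservation. From the bracket in $H_{J_1}$ recorded in Section 3,
$$[D(\underline{\bf r}, {\bf r}), D(\underline{\bf s}, {\bf s})] = (J_1 {\bf r} \,|\, {\bf s})\, D(J_1({\bf r+s}), {\bf r+s}).$$
The crucial identity is that when $r_N = s_N = 0$, the $N$-th component of $J_1 {\bf r}$ pairs against $s_N = 0$ and drops out of the inner product, leaving $(J_1 {\bf r} \,|\, {\bf s}) = (\overline{\bf r'} \,|\, {\bf s'})$. Since $({\bf r+s})_N = 0$ as well, applying $\phi$ to the right-hand side gives $(\overline{\bf r'} \,|\, {\bf s'})\, D(\overline{{\bf r'+s'}}, {\bf r'+s'})$, which is precisely $[D(\overline{\bf r'}, {\bf r'}), D(\overline{\bf s'}, {\bf s'})]$ in $H_{N-1}$ by the bracket formula of Example 2.3. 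Degenerate cases where ${\bf r'+s'} = 0$ make both sides vanish consistently. No real obstacle arises here; the whole content of the lemma is the block-structural observation that deleting the last coordinate from $J_1$ leaves exactly the symplectic matrix $J$ underlying the Hamiltonian Lie algebra $H_{2m}$, and the extra last row of $J_1$ is killed by the constraint $s_N=0$ on both sides of the bracket.
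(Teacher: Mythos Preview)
Your proof is correct and follows essentially the same approach as the paper: define the map $D(\underline{\bf s},{\bf s})\mapsto D(\overline{\bf s'},{\bf s'})$ via projection onto the first $2m$ coordinates, and verify bracket preservation using the key identity $(J_1{\bf r}\,|\,{\bf s})=(\overline{\bf r'}\,|\,{\bf s'})$ when $r_N=s_N=0$. Your version is in fact slightly more explicit than the paper's, since you spell out the projection notation ${\bf s'}$ and verify the correspondence between ``$s_N=0,\ {\bf s}\notin G_{J_1}$'' and ``${\bf s'}\neq 0$'', whereas the paper leaves these identifications implicit.
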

\begin{proof}
	Define a map $\eta : \mcal S \to H_{N-1}$ by $\eta ( D(\underline{\bf s},{\bf s})) =  D({J_m\bf s},{\bf s}) $. We check that it is a Lie algebra homomorphism. Note that \\
	$$\eta([D(\un{\bf r},{\bf r}),D(\un{\bf s},{\bf s})])=(\un{\bf r}|{\bf s})\eta( D(\un{\bf r+s},{\bf r+s}))=(\un{\bf r}|{\bf s}) D(J_m({\bf r+s}),{\bf r+s})$$
	$$ =({J_m\bf r}|{\bf s})D(J_m({\bf r+s}),{\bf r+s})= [(D({J_m\bf r},{\bf r})),(D({J_m\bf s},{\bf s}))]=[\eta(D(\un{\bf r},{\bf r})),\eta(D(\un{\bf s},{\bf s}))],$$
 observe that $(\un{\bf r}|{\bf s})=({J_m\bf r},{\bf s})$ due to the fact that $s_N=0$. It is easy to see that $\eta $ is bijective.
\end{proof}
Therefore from Proposition \ref{pn6.1} and Lemma \ref{ln6.1} we have that $W^+$ is an irreducible module for $\mf h \ot \mcal A_{N-1} \op H_{N-1} \op \dis{\bigoplus_{i=1}^{N-1}}\C d_i $ with uniformly bounded weight spaces, as $K_N,d_N$ are central in $\tau_{\bar J}^0$. Now we proceed similarly like SSEALA case in Section 5 and conclude that $W^+$ is an irreducible module for $\mcal A_{N-1}\rtimes ( H_{N-1} \op \dis{\bigoplus_{i=1}^{N-1}}\C d_i)$. Since $H_{N-1}=H_{J_m}$, therefore by Theorem \ref{t4.1} (also see \cite{TH} Theorem 5.2 ) we have $W^+ \simeq P'\ot \mcal A_{N-1} $, where $P'$ is a finite dimensional irreducible $\mf{sp}_{N-1} $ module.\\
\subsection{Construction of induced module and its irreducible quotient for $\tau_{\bar J}$}Let $W$ be an irreducible module for $\tau_{\bar J}^0$. Now consider the Verma module
  $$M(W)=U(\tau_{\bar J}) \bigotimes_{\tau_{\bar J}^+\op \tau_{\bar J}^0} W,  $$ where $\tau_{\bar J}^+$ acts trivially on $W$. Let $L(W)$ be the unique irreducible quotient of $M(W)$. Moreover it is easy to see that if $W_1 \simeq W_2$ as $\tau_{\bar J}^0$ then $L(W_1) \simeq L(W_2)$ as $\tau_{\bar J}$ module. Now since $W^+ \simeq  P'\ot \mcal A_{N-1} $ as $\tau_{\bar J}^0$ module, hence we have the following theorem.
  \begin{theorem}
  	Let $V$ be an irreducible integrable non zero level module for $\tau_{\bar J}$ with finite dimensional weight spaces with respect to $\wtil {\mf h}$ and $\mf h$ acts non-trivially on $V$. Then upto a twist of an automorphism $V \simeq L(P'\ot \mcal A_{N-1})$, where $P'$ is a finite dimensional irreducible $\mf{sp}_{N-1} $ module.
  \end{theorem}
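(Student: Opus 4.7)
The plan is to assemble the structural results already established in Section 6 and then promote the identification of the highest-weight space $W^+$ to an identification of all of $V$ by a standard generalized Verma module argument. First, after twisting by a suitable $\Phi_F$ as in \eqref{a2.5} I reduce to the case that only $K_N$ acts non-trivially on $V$ and $K_1,\ldots,K_{N-1}$ act as zero. Under the running assumption that $\mf h$ acts non-trivially on $V$, Theorem \ref{tn6.1} produces a weight $\mu$ with $\tau_{J_1}^{+}.V_\mu=0$, so that $W^+=\{v\in V:\tau_{J_1}^{+}.v=0\}$ is non-zero. A weight/PBW argument shows $W^+$ is an irreducible $\tau_{J_1}^{0}$-module and that $V$ is generated by $W^+$ as a $\tau_{J_1}$-module.

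Next I invoke the structural work preceding the theorem to pin down the isomorphism class of $W^+$. Proposition \ref{pn6.1} gives uniformly bounded weight spaces together with the triviality of $t^{\bf s}K_i$ $(1\leq i\leq N-1)$ and of $t^{\bf m}K_N$ $(m_N=0,\ {\bf m}\neq 0)$ on $W^+$, while Lemma \ref{ln6.1} identifies the degree-zero slice of $H_{J_1}$ with the Hamiltonian algebra $H_{N-1}$. Combining these, $W^+$ becomes an irreducible module with uniformly bounded weight spaces for the semi-direct product $A_{N-1}\ltimes \wtil{H}_{N-1}$, where $\wtil H_{N-1}=H_{N-1}\op\dis{\bigoplus_{i=1}^{N-1}}\C d_i$, augmented by the commutative action of $\mf h\ot A_{N-1}$. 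Applying Theorem \ref{t4.1} to this $H$-type derivation algebra (equivalently, using the Talboom classification cited in \cite{TH}) yields $W^+\simeq V_{N-1}\ot A_{N-1}$, with $V_{N-1}$ a finite dimensional irreducible $\mf{sp}_{N-1}$-module and the $\mf g$-highest weight $\bar\la$ controlling the $\mf h\ot A_{N-1}$-action.

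Finally I promote this identification to $V$ via induction. Form the generalized Verma module
\[
M(W^+)=U(\tau_{J_1})\bigotimes_{\tau_{J_1}^{+}\op \tau_{J_1}^{0}}W^+,
\]
where $\tau_{J_1}^{+}$ acts trivially on $W^+$, and let $L(W^+)$ denote its unique irreducible quotient. Because $V$ is generated by $W^+$ and $\tau_{J_1}^{+}$ annihilates $W^+$, the universal property of induction supplies a surjective $\tau_{J_1}$-map $M(W^+)\twoheadrightarrow V$. Irreducibility of $V$ forces this map to factor through $L(W^+)$, giving $V\simeq L(W^+)\simeq L(V_{N-1}\ot A_{N-1})$, as asserted.

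The main obstacle is the identification $W^+\simeq V_{N-1}\ot A_{N-1}$: this is what the preparatory work of the section (reduction of the centre via Proposition \ref{pn6.1}, the isomorphism $S\simeq H_{N-1}$ in Lemma \ref{ln6.1}, and the SSEALA jet-module classification Theorem \ref{t4.1}) is really for. The remaining passage from $W^+$ to $V$ is the formal Verma-type reconstruction, the only subtlety being that one must verify at the outset (via Theorem \ref{tn6.1}) that some weight of $V$ is in fact annihilated by $\tau_{J_1}^{+}$, since without this highest-weight vector the induction step is unavailable.
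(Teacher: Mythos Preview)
Your proposal is correct and follows essentially the same route as the paper: the preparatory results (Theorem \ref{tn6.1}, Proposition \ref{pn6.1}, Lemma \ref{ln6.1}, together with the Section~5/appendix argument converting the $\mf h\ot A_{N-1}$-action into an associative $A_{N-1}$-action) identify $W^+\simeq V_{N-1}\ot A_{N-1}$ as a $\tau_{J_1}^{0}$-module, and the generalized Verma module surjection $M(W^+)\twoheadrightarrow V$ then gives $V\simeq L(W^+)\simeq L(V_{N-1}\ot A_{N-1})$. You have in fact spelled out the final induction/uniqueness-of-irreducible-quotient step more explicitly than the paper does.
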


\section{appendix}

In this section we prove two technical result which we have used in last three sections. We follow the approach of \cite{GL,
	RH,GLZ} to prove Theorem \ref{t6.1}. For convenient we mention some notations here as in section 4. Recall the action of $ {H_B} $ on $\mcal A$ given by \begin{align}\label{a6.1}
	D(B\mbf r,\mbf r).t^{\mbf s}=(B{\bf r| s})t^{\bf r+s},
\end{align} 
for all $\bf r, s$$ \in \Z^N$. 
Let $G= H_B \ltimes \mcal A$ be the semi direct product of $H_B $ and $\mcal A$. Clearly $G = \dis{\bigo_{\mbf r \in \Z^N}}G_{\bf r}$ is a $\Z^N$ graded Lie algebra. Let $G'=\dis{\bigo_{\mbf r \in \Z^N \setminus \{\mbf 0\}}}G_{\bf r}$, then $G'$ is an ideal of $G$. Also let $\mcal A'=\dis{\bigo_{\mbf r \in \Z^N \setminus\{\mbf 0\}}}\C t^{\bf r}$. Let $W$ be an irreducible module for $G$ which satisfy the following properties.
\begin{enumerate}
	\item $W = \dis{\bigoplus_{\mbf r \in \Z^N}W_{\bf r}}$ and $G_{\bf r}.W_{\bf s}\subseteq W_{\bf r+s}$.
	\item $W$ is uniformly bounded $G$ module, i.e there exists a natural number $M$ such that dim $W_{\bf r} \leq M$ for all ${\bf r} \in \Z^N$.
\end{enumerate}
\begin{theorem}\label{t6.1}
	Let $\mcal A'$ acts non-trivially on $W$ and $t^{\bf 0}$ acts as a non-zero scalar $c$ on $W$. Then there exists non-zero scalars $\la_{\bf r, s}$ such that $t^{\bf r}t^{\bf s}=\la_{\bf r,s}t^{\bf r+s}$ on $W$. Moreover\\
	\begin{enumerate}
		\item $\la_{\bf r,s}=\la $ for all ${\bf r,s }\neq \mbf 0,{\bf r+s} \neq \mbf 0.$
		\item $\la_{\bf r, -r}=\mu $ for all ${\bf r} \neq \mbf 0$.
		\item $\la_{\mbf 0, \bf r}=c$ and $\la^2=\mu c$.
	\end{enumerate}
	
\end{theorem}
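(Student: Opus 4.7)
The strategy follows the techniques of \cite{GL,RH,GLZ}. There are three stages: (i) show each $t^{\bf r}$ acts as a nonzero operator on $W$, (ii) establish the multiplicative relation $t^{\bf r}t^{\bf s}=\la_{\bf r,s}\,t^{\bf r+s}$ for nonzero scalars, and (iii) extract the precise form of the scalars from associativity.

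For stage (i), observe first that $A$ is an abelian subalgebra of $G$, so the $t^{\bf r}$'s commute as operators on $W$. Set $K := \bigcap_{{\bf r} \in \Z^N} \ker t^{\bf r}$. Then $K$ is a $G$-submodule: it is manifestly $A$-stable, and for $v \in K$ the computation
\begin{align*}
t^{\bf s}(h_{\bf k}\cdot v) \;=\; h_{\bf k} t^{\bf s} v - (B{\bf k}|{\bf s})\,t^{\bf k+s} v \;=\; 0
\end{align*}
shows $H_B$-stability. Since $A'$ acts non-trivially by hypothesis, $K \subsetneq W$, and irreducibility gives $K=0$. Exploiting the same commutator identity contrapositively, if $t^{\bf s}=0$ then $t^{\bf k+s}=0$ for every ${\bf k}$ with $(B{\bf k}|{\bf s})\ne 0$; propagating this along the complement of the hyperplane $\{(B{\bf u}|{\bf s})=0\}$ would eventually kill a $t^{\bf r_0}$ known to be nonzero, a contradiction. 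Hence $t^{\bf r}\ne 0$ for every ${\bf r}\in\Z^N$.

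Stage (ii) is the crux. Fix ${\bf r},{\bf s}$ with ${\bf r+s}\ne 0$ and consider the grade-${\bf r+s}$ operator $F:=t^{\bf r}t^{\bf s}$. Using $[h_{\bf k},t^{\bf u}] = (B{\bf k}|{\bf u})t^{\bf k+u}$ and the Leibniz rule,
\begin{align*}
[h_{\bf k},F] \;=\; (B{\bf k}|{\bf r})\,t^{\bf k+r}t^{\bf s} + (B{\bf k}|{\bf s})\,t^{\bf r}t^{\bf k+s},
\end{align*}
while $[h_{\bf k},t^{\bf r+s}]=(B{\bf k}|{\bf r+s})\,t^{\bf k+r+s}$. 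One argues as in \cite{GL,GLZ} that the image of $U(A)$ inside $\mathrm{End}(W)$ has uniformly bounded graded dimension: each graded piece of grade ${\bf r}$ maps $W_{\bf k}$ into $W_{\bf k+r}$ which has dimension bounded by some $M$, and commutativity of the family combined with $H_B$-equivariance forces the image in each grade to be at most one-dimensional. Intuitively, any second linearly independent operator of a given grade would, via iterated commutators with $h_{\bf k}$ and the commutativity relations, produce unbounded-rank families on some weight space, contradicting the uniform bound. Thus $F$ is a scalar multiple of $t^{\bf r+s}$, and $H_B$-equivariance forces the scalar to be independent of the weight space, giving $F=\la_{\bf r,s}\,t^{\bf r+s}$ globally with $\la_{\bf r,s}\ne 0$ (since $F$ and $t^{\bf r+s}$ are both nonzero).

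Stage (iii) is routine once (ii) is in hand. Associativity of operator composition yields the 2-cocycle identity
\begin{align*}
\la_{\bf r,s}\,\la_{\bf r+s,t} \;=\; \la_{\bf s,t}\,\la_{\bf r,s+t},
\end{align*}
commutativity of the $t^{\bf r}$'s gives $\la_{\bf r,s}=\la_{\bf s,r}$, and the normalization $t^{\bf 0}=c\cdot\mathrm{id}$ forces $\la_{{\bf 0},{\bf r}}=c$. A short case analysis along the lines of \cite[Lemma 7.5]{RH} then shows $\la_{\bf r,s}$ takes a single value $\la$ whenever ${\bf r},{\bf s},{\bf r+s}$ are all nonzero, and a single value $\mu$ along the antidiagonal ${\bf s}=-{\bf r}$. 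Specializing the cocycle identity at ${\bf t}=-{\bf s}$ (with ${\bf r},{\bf s},{\bf r+s}$ all nonzero) reads $\la\cdot\la = \mu\cdot c$, i.e.\ $\la^2=\mu c$.

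The principal obstacle is the rigidity step in stage (ii)---pinning down the one-dimensionality of each graded piece of the image of $U(A)$ in $\mathrm{End}(W)$ using only uniform boundedness, commutativity of the $A$-action, and $H_B$-equivariance. This is the technical heart of the argument and is exactly where the techniques of \cite{GL,GLZ} enter.
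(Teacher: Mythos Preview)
Your outline has the right architecture, but stage (iii) contains a genuine gap: the associativity 2--cocycle relation
\[
\la_{\bf r,s}\,\la_{\bf r+s,t}=\la_{\bf s,t}\,\la_{\bf r,s+t},
\]
together with symmetry and the normalization $\la_{0,{\bf r}}=c$, does \emph{not} force $\la_{\bf r,s}$ to be constant on the generic set. For instance, $\la_{{\bf r},{\bf s}}=q^{({\bf r}\mid{\bf s})}$ (ordinary dot product, any $q\in\C^\times$) satisfies all three conditions but is not constant. What actually pins the scalars down is the $H_B$--equivariance you invoked only in stage (ii): applying $[h_{\bf l},-]$ to the relation $t^{\bf r}t^{\bf s}=\la_{\bf r,s}t^{\bf r+s}$ yields the functional equation
\[
(B{\bf l}\mid{\bf s})\,\la_{{\bf r},{\bf s+l}}+(B{\bf l}\mid{\bf r})\,\la_{{\bf s},{\bf r+l}}=(B{\bf l}\mid{\bf r+s})\,\la_{{\bf r},{\bf s}},
\]
and it is from this identity (not the cocycle relation) that the paper, via a sequence of lemmas in the style of \cite[Lemmas~9.6--9.10]{RH}, extracts constancy of $\la_{{\bf r},{\bf s}}$ for ${\bf r},{\bf s},{\bf r+s}\ne 0$ and of $\la_{{\bf r},-{\bf r}}$. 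Your reference to \cite[Lemma~7.5]{RH} is to the wrong part of that paper.

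A secondary issue: in stage (i) you prove only that each $t^{\bf r}$ is a nonzero operator, whereas the argument needs injectivity. The paper establishes a dichotomy (each $t^{\bf r}$ is either injective on $W$ or locally nilpotent, and in the latter case $A'.W=0$), following \cite{GL,GLZ}. Without injectivity you cannot conclude that $t^{\bf r}t^{\bf s}\ne 0$, so your claim that $\la_{{\bf r},{\bf s}}\ne 0$ in stage (ii) is unjustified; ``$F$ and $t^{\bf r+s}$ both nonzero'' does not follow from $t^{\bf r}\ne 0$ and $t^{\bf s}\ne 0$ alone.
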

We prove this theorem using several lemmas. The next two lemmas follows from Lemma 3.1 and Lemma 3.2 of \cite{GLZ}.
\begin{lemma}

	 Let $W = \dis{\bigoplus_{\mbf r \in \Z^N}W_{\bf r}}$ be an irreducible uniformly bounded module for $\mcal A $. Then dim $W_{\bf r} \leq 1$ for all $\mbf r \in \Z^N$.

\end{lemma}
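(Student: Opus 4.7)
The plan is to realize $W$ as a graded-simple cyclic module over a commutative $\Z^N$-graded associative algebra, and then exploit the elementary structure theory of commutative graded fields over $\C$. Let $B \subseteq \operatorname{End}(W)$ denote the commutative associative subalgebra generated by the operators $\bar t^{\bf r}$ induced by the $A$-action; commutativity comes for free since $A$ is an abelian Lie algebra. Each generator $\bar t^{\bf r}$ is homogeneous of degree $\bf r$, so $B$ inherits a $\Z^N$-grading, and $B$-stable subspaces of $W$ coincide with $A$-stable subspaces (both being subspaces stable under every $\bar t^{\bf r}$). Hence $W$ is a graded-simple $B$-module.

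Next, I would fix a nonzero homogeneous $v \in W_{{\bf r}_0}$ and consider $B \cdot v$. If $B \cdot v = 0$ then $\C v$ is a nonzero graded submodule and $W = \C v$ is one-dimensional, making the lemma trivial; otherwise $B\cdot v$ is a nonzero graded submodule, hence equals $W$ by graded simplicity, and the map $b \mapsto b\cdot v$ induces a graded $B$-module isomorphism $\bar B := B/\operatorname{Ann}_B(v) \cong W$ (up to the grade shift by ${\bf r}_0$). Graded simplicity of $W$ translates into $\bar B$ having no proper graded ideals. A standard argument then forces $\bar B$ to be a commutative graded field: any nonzero homogeneous $\bar x \in \bar B$ generates the graded ideal $\bar B \bar x$, which must equal $\bar B$, so $\bar x$ is invertible (with inverse of opposite degree). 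The one subtlety worth flagging is that $B$ need not contain an identity a priori, which is why the submodule generated by $v$ must be taken as $\C v + B\cdot v$ rather than $B\cdot v$; once we pass to $\bar B$, an identity is already present.

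Finally the uniform bound enters. The degree-zero subring $\bar B_{{\bf 0}}$ acts faithfully on $W$, and since any $\bar b \in \bar B_{{\bf 0}}$ annihilating $v$ would (by commuting with $\bar B$) annihilate $\bar B \cdot v = W$, $\bar B_{{\bf 0}}$ embeds into $\operatorname{End}(W_{{\bf r}_0})$, a space of dimension at most $M^2$. Thus $\bar B_{{\bf 0}}$ is a finite-dimensional field extension of $\C$, which forces $\bar B_{{\bf 0}} = \C$. For each $\bf r$ with $\bar B_{\bf r} \neq 0$, multiplication by the inverse of a chosen nonzero $\bar x_{\bf r} \in \bar B_{\bf r}$ identifies $\bar B_{\bf r}$ with $\bar B_{{\bf 0}} = \C$, giving $\dim \bar B_{\bf r} \leq 1$. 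Transporting across the isomorphism $\bar B \cong W$ yields $\dim W_{\bf r} \leq 1$, as required.
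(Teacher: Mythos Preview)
Your argument is correct. The paper itself does not supply a proof of this lemma but simply defers to Lemma~3.1 of \cite{GLZ}; your self-contained argument---realizing $W$ as a cyclic module over the commutative graded algebra $\bar B$, showing $\bar B$ is a graded field, and using the uniform bound to force $\bar B_0=\C$---is precisely the standard mechanism behind that cited result, so there is nothing substantive to compare. One cosmetic remark: once you have shown that $B\cdot v$ is itself a graded $A$-submodule and hence equals $W$ (in the non-trivial case), the side comment about needing $\C v + B\cdot v$ becomes superfluous, and the identity $\bar e\in\bar B_0$ you extract from $v\in B\cdot v$ already handles the unitality issue cleanly.
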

\begin{lemma}
		Let $g \in U(\mcal A)$ such that $g.v=0$ for some $v \in W$. Then $g$ is locally nilpotent on  $W$. 
	
\end{lemma}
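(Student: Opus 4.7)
The plan is to show that the set $W^{(n)}(g) := \{w \in W : g^n w = 0\}$ exhausts $W$ as $n \to \infty$, i.e.\ that $W = \bigcup_{n \geq 1} W^{(n)}(g)$, which is exactly the assertion of local nilpotence. We may assume $v \neq 0$ (otherwise the hypothesis is vacuous), so by irreducibility of $W$ we have $W = U(G).v$, and by the PBW theorem for the semidirect product $G = H_B \ltimes A$ every element of $U(G)$ is a linear combination of ordered monomials $h_{{\bf r}_1} h_{{\bf r}_2} \cdots h_{{\bf r}_k}\, a$ with $a \in U(A)$.

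The key calculation rests on two facts: $U(A)$ is commutative (because $A$ is abelian as a Lie algebra), and the adjoint action $D := \mathrm{ad}(h_{\bf r})$ of any $h_{\bf r} \in H_B$ on $U(G)$ preserves $U(A)$, acting on it as a derivation (this follows from $[h_{\bf r}, t^{\bf s}] = (B{\bf r}|{\bf s})\, t^{\bf r+s} \in A$ together with the Leibniz rule). A straightforward induction on $n$, starting from $g h_{\bf r} = h_{\bf r} g - D(g)$, then yields the operator identity
\[
g^{n+1} h_{\bf r} \;=\; h_{\bf r}\, g^{n+1} \;-\; (n+1)\, g^{n}\, D(g)
\]
in $U(G)$, where the coefficient $(n+1)$ collapses precisely because $g$ and $D(g)$ commute in $U(A)$.

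Now suppose $w \in W^{(n)}(g)$, so $g^{n} w = 0$ and hence $g^{n+1} w = 0$. Applying the identity to $w$,
\[
g^{n+1}(h_{\bf r} w) = h_{\bf r} g^{n+1} w - (n+1)\, D(g)\, g^{n} w = 0,
\]
so $h_{\bf r} w \in W^{(n+1)}(g)$. Likewise, for any $a \in U(A)$ one has $g^{n}(a w) = a g^{n} w = 0$ by commutativity, so $a w \in W^{(n)}(g)$. Starting from $v \in W^{(1)}(g)$, these two closure properties together give $h_{{\bf r}_1} \cdots h_{{\bf r}_k} a . v \in W^{(k+1)}(g)$: first $a v \in W^{(1)}$, then each successive application of an $h_{{\bf r}_i}$ raises the index by one.

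Since such monomial vectors span $W = U(G).v$, every $w \in W$ lies in some $W^{(n)}(g)$, completing the proof. The only point that requires care is verifying the displayed commutator identity; beyond that, the argument is purely formal and does not invoke the uniform boundedness hypothesis, the scalar $c$, or the nontriviality of the $A$-action — these have already been used to set the stage in Theorem \ref{t6.1} but play no role in this particular lemma.
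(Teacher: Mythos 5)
Your proof is correct. The paper does not actually prove this lemma---it only cites Lemmas 3.1 and 3.2 of \cite{GLZ}---and your argument (the identity $g^{n+1}h_{\bf r}=h_{\bf r}g^{n+1}-(n+1)g^nD(g)$, which holds because $\mathrm{ad}(h_{\bf r})$ is a derivation preserving the commutative algebra $U(A)$, showing that $\bigcup_n\ker g^n$ is a nonzero $G$-submodule and hence all of $W$ by irreducibility) is precisely the standard argument underlying that citation; you are also right that neither the grading nor the uniform boundedness is needed here.
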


\begin{lemma}\label{L6.3}
(1)	
Either each $t^{\mbf r}$, $\mbf r\neq 0$ acts injectively on $W$ or every $t^{\bf r}$ acts locally nilpotently on $W$.\\
(2) If $t^{\bf r}$ acts locally nilpotently for all $\mbf r \neq \mbf 0$, then $\mcal A'.W=0$.

\end{lemma}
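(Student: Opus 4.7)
The plan is to prove both parts via one $G$-submodule. Let $V := \bigcap_{{\bf r}\neq 0}\ker(t^{{\bf r}}) \subseteq W$, the common annihilator of $A'$. Because $U(A)$ is commutative, $V$ is $A$-stable. For $H_B$-stability take $w\in V$ and compute, using $[h_{{\bf a}},t^{{\bf r}}] = (B{\bf a}|{\bf r})\,t^{{\bf r}+{\bf a}}$, that $t^{{\bf r}}(h_{{\bf a}}w) = h_{{\bf a}}(t^{{\bf r}}w) - (B{\bf a}|{\bf r})\,t^{{\bf r}+{\bf a}}w = -(B{\bf a}|{\bf r})\,t^{{\bf r}+{\bf a}}w$. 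When ${\bf r}+{\bf a}\neq 0$ this vanishes because $w\in V$; when ${\bf r}+{\bf a}=0$ the coefficient $-(B(-{\bf r})|{\bf r}) = (B{\bf r}|{\bf r}) = 0$ by skew-symmetry of $B$, so the potential obstruction arising from $t^0$ acting as the nonzero scalar $c$ drops out automatically. Hence $V$ is a $G$-submodule, and irreducibility of $W$ forces $V=0$ or $V=W$; the latter alternative is precisely $A'W=0$, which is part (2).

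For part (1), suppose some $t^{{\bf r}_0}$ with ${\bf r}_0\neq 0$ fails to be injective. By Lemma 6.2, $t^{{\bf r}_0}$ is locally nilpotent, so $K := \ker(t^{{\bf r}_0}) \neq 0$, and $K$ is $A$-invariant by commutativity. For any fixed ${\bf s}\neq 0$, the product $t^{{\bf r}_0}t^{{\bf s}} \in U(A)$ annihilates every vector in $K$, so Lemma 6.2 gives that $t^{{\bf r}_0}t^{{\bf s}}$ is locally nilpotent on $W$. To upgrade this to local nilpotency of $t^{{\bf s}}$ alone, I would restrict to $K$ and exhibit a nonzero vector in $K$ killed by $t^{{\bf s}}$: if $t^{{\bf s}}|_K$ already has nontrivial kernel this is immediate; otherwise $t^{{\bf s}}|_K$ would be injective, and iterating inside $K$ while exploiting uniform boundedness of the graded components of $K$, together with the $H_B$-action sending $K$ toward translates $\ker(t^{{\bf r}_0+{\bf a}})$, would furnish the desired vector. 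Lemma 6.2 then promotes local nilpotency of $t^{{\bf s}}|_K$ to local nilpotency on all of $W$, placing us in the hypothesis of part (2); combined with the $V$-argument this forces every $t^{{\bf r}}$ to act as zero, which is the locally nilpotent alternative of (1).

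The main obstacle is the injective-to-nilpotent propagation: showing that, once one $t^{{\bf r}_0}$ is locally nilpotent, every $t^{{\bf s}}$ admits a nonzero vector in its kernel. The essential structural input is the interaction of $H_B$ with $A$ via the skew form $B$: the identity $[h_{{\bf a}},t^{{\bf r}_0}] = (B{\bf a}|{\bf r}_0)\,t^{{\bf r}_0 + {\bf a}}$ lets one transfer information about $t^{{\bf r}_0}$ to every translate $t^{{\bf r}_0 + {\bf a}}$ (as long as $(B{\bf a}|{\bf r}_0)\neq 0$, which is generically available), and this together with the $G$-invariance of $V$ established at the outset closes the argument.
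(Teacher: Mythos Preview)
Your $V$-argument is correct as far as it goes: $V := \bigcap_{{\bf r}\neq 0}\ker(t^{{\bf r}})$ is indeed a graded $G$-submodule (the skew-symmetry of $B$ neatly kills the potential $t^0$ term), so irreducibility gives $V\in\{0,W\}$. But this dichotomy is \emph{not} the dichotomy of part~(1), and it does \emph{not} prove part~(2). For (2), the hypothesis is that every $t^{\bf r}$ is locally nilpotent; the conclusion is $V=W$. You have shown $V\in\{0,W\}$, but nowhere do you rule out $V=0$ under this hypothesis. Infinitely many locally nilpotent commuting operators on an infinite-dimensional space need not share a common kernel vector; producing one is exactly the content of (2), and the paper handles it by a separate, nontrivial argument adapted from \cite{RH}, Proposition~9.5. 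Your sentence ``combined with the $V$-argument this forces every $t^{{\bf r}}$ to act as zero'' is therefore circular: it assumes (2) in order to finish (1).

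For (1) you candidly flag the gap yourself. The passage ``iterating inside $K$ while exploiting uniform boundedness \dots would furnish the desired vector'' is a hope, not an argument: you give no mechanism by which an injective $t^{\bf s}|_K$ together with the $H_B$-action produces a vector killed by $t^{\bf s}$. Note also that $V=0$ does not imply all $t^{\bf r}$ are injective (only that no single vector is killed by all of them), so even with (2) in hand the $V$-dichotomy would not yield (1).

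The paper's approach runs in the opposite direction and that is what makes it work: assume some $t^{\bf r}$ is \emph{injective} and propagate injectivity. If $t^{\bf s}$ with $(B{\bf r}\mid{\bf s})\neq 0$ were locally nilpotent, uniform boundedness first upgrades this to genuine nilpotency (as in Claim~1 of \cite{GL}, Proposition~3.4); then the relation $[h_{\bf r},t^{\bf s}]=(B{\bf r}\mid{\bf s})\,t^{{\bf r}+{\bf s}}$, together with injectivity of $t^{\bf r}$, forces growth that contradicts the uniform bound (Claim~3 of loc.\ cit.). A two-step bridge through an auxiliary ${\bf s}_1$ with $(B{\bf r}\mid{\bf s}_1)\neq 0$ and $(B{\bf s}_1\mid{\bf s})\neq 0$ covers the case $(B{\bf r}\mid{\bf s})=0$. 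The injective operator is the engine of the contradiction; propagating nilpotency, as you attempt, lacks such an engine.
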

\begin{proof}
	Let $t^{\bf r}$ acts injectively on $W$ for some ${\bf r} \neq \mbf 0$. Let ${\bf s} \in \Z^N$ such that $(B{\bf r}|{\bf s})\neq 0$ and $t^{\bf s}$ acts locally nilpotently on $W$. Then using the arguments similar to Claim 1 of [\cite{GL},Proposition 3.4 ] we have $t^{\bf s}$ acts nilpotently on $W$. Now following arguments of Claim 3 of [\cite{GL},Proposition 3.4 ] and the fact that $(B{\bf r}|{\bf s})\neq 0$ we get a contradiction. Hence $t^{\bf s}$ acts injectively on $W$ for all ${\bf s}$ satisfying $(B{\bf r}|{\bf s})\neq 0$. If $(B{\bf r}|{\bf s})=0$ then consider ${\mbf s_1} \in \Z^N$ such that $(B{\mbf r}|{\mbf s_1})\neq 0$ and $(B{\mbf s_1}|{\mbf s})\neq 0$. Hence by above we have action of $t^{\bf s}$ is injective on $W$. This completes the proof of 1. Proof of 2 follows just by changing $\bar {\bf r}$ to $B\mbf r$ in the proof of Proposition 9.5 of \cite{RH}. 
\end{proof}
{\bf Proof of Theorem \ref{t6.1} :} The proof runs parallel to Theorem 9.1, \cite{RH}. From Lemma \ref{L6.3} it follows that $t^{\bf r}$ acts injectively on $W$ for all $\mbf r \neq \mbf 0$. This implies that dim $W_{\bf r}$ = dim $W_{\bf s}$ for all $\bf r,s \in $$\Z^N$. Now by defining $T_{\bf r, \bf s}=t^{\bf r}t^{\bf s}-\la_{\bf r,s}t^{\bf r+s}$ and following the proof of Theorem 9.1, \cite{RH} we have 
\begin{align}\label{a6.2}
	(B{\bf l}|{\bf s})\la_{\bf{ r,s+l}}+(B{\bf l}|{\bf r})\la_{\bf s,r+l}-(B{\bf l}|{\bf r+ s})\la_{\bf r,s}=0,
\end{align}
for all ${\bf l,r,s}\in \Z^N \setminus \{\mbf 0\}.$ Now we prove some lemmas to complete the proof of Theorem \ref{t6.1}.
\begin{lemma}\label{L6.4}
	Let $(B{\bf l}|{\bf s})$$\neq 0$. Then we have
	\begin{enumerate}
		\item $\la_{\bf s+l,s}=\la_{\bf s,s}$.
			\item $\la_{\bf s,l}=\la_{\bf s,s}=\la_{\bf l,l}$.
		\item $\la_{\bf s,s}=\la_{\bf s+l,s+l}=\la_{\bf l,l}$.
			\item $\la_{{\bf s+l},j{\bf l}}=\la_{\bf s,s}=\la_{\bf l,l}$ for all $j \in \Z \setminus \{0\}$.
			\item $\la_{{\bf s},j{\bf s +l}}=\la_{\bf s,s}=\la_{\bf l,l}$ for all $j \in \Z $.
	\end{enumerate}
\end{lemma}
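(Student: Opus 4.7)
The main tool is the polynomial identity (6.2), together with the commutativity of the subalgebra $A \subset G$: since $t^{\bf r} t^{\bf s} = t^{\bf s} t^{\bf r}$ in $U(G)$, the scalars satisfy $\la_{\bf r, \bf s} = \la_{\bf s, \bf r}$ whenever $t^{\bf r+s}$ acts non-trivially, which is always the case by Lemma \ref{L6.3}(1) if ${\bf r+s} \neq 0$, and follows from $t^{\bf 0} = c \neq 0$ if ${\bf r + s} = 0$. The plan is to specialize the variables in (6.2) to cascade these identities, starting from (1) and bootstrapping through (5).

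For (1), I would set ${\bf r} = {\bf s}$ in (6.2); using $(B{\bf l}|2{\bf s}) = 2(B{\bf l}|{\bf s})$ the first two terms coalesce, and dividing by the non-zero $(B{\bf l}|{\bf s})$ yields $\la_{{\bf s}, {\bf s+l}} = \la_{{\bf s}, {\bf s}}$; the symmetry $\la_{{\bf s+l}, {\bf s}} = \la_{{\bf s}, {\bf s+l}}$ then gives the claim. For (2), I would set ${\bf r} = {\bf l}$ in (6.2); skew-symmetry kills $(B{\bf l}|{\bf l})=0$ and reduces $(B{\bf l}|{\bf l+s})$ to $(B{\bf l}|{\bf s})$, yielding $\la_{{\bf l}, {\bf s+l}} = \la_{{\bf l}, {\bf s}}$. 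The hypothesis of (1) is symmetric in ${\bf l}, {\bf s}$, so (1) applied with $l,s$ interchanged gives $\la_{{\bf l+s}, {\bf l}} = \la_{{\bf l}, {\bf l}}$, hence $\la_{{\bf l}, {\bf l+s}} = \la_{{\bf l}, {\bf l}}$ by symmetry, and chaining gives $\la_{{\bf l}, {\bf s}} = \la_{{\bf l}, {\bf l}}$. Swapping ${\bf l} \leftrightarrow {\bf s}$ in this entire argument produces $\la_{{\bf s}, {\bf l}} = \la_{{\bf s}, {\bf s}}$, and combining with the $A$-symmetry finishes (2).

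Parts (3)--(5) are direct corollaries of (2). For (3) I would apply (2) with ${\bf s}$ replaced by ${\bf s+l}$, noting $(B{\bf l}|{\bf s+l}) = (B{\bf l}|{\bf s}) \neq 0$, to get $\la_{{\bf s+l}, {\bf s+l}} = \la_{{\bf l}, {\bf l}}$, which equals $\la_{{\bf s}, {\bf s}}$ by (2). For (4), when $j \neq 0$ one has $(B(j{\bf l})|({\bf s+l})) = j(B{\bf l}|{\bf s}) \neq 0$, so (2) applied with $(j{\bf l}, {\bf s+l})$ in place of $({\bf l}, {\bf s})$ yields $\la_{{\bf s+l}, j{\bf l}} = \la_{{\bf s+l}, {\bf s+l}} = \la_{j{\bf l}, j{\bf l}}$, and (3) closes it. For (5) the case $j = 0$ is just (2); for $j \neq 0$, $(B{\bf s}|j{\bf s+l}) = (B{\bf s}|{\bf l}) = -(B{\bf l}|{\bf s}) \neq 0$, so (2) applied with $({\bf s}, j{\bf s+l})$ gives the conclusion. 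The main potential obstacle is purely bookkeeping: one must verify that each substitution preserves the non-degeneracy hypothesis, which in every case reduces to the original $(B{\bf l}|{\bf s}) \neq 0$ by skew-symmetry and bilinearity, so no genuinely new input is needed beyond (6.2) and the commutativity of $A$.
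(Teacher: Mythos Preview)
Your proof is correct and follows essentially the same approach the paper indicates: the paper's own proof merely states that the argument runs parallel to Lemma~9.6 of \cite{RH} using $(B{\bf s}|{\bf s})=0$, and your specializations of~(\ref{a6.2}) together with the symmetry $\la_{\bf r,\bf s}=\la_{\bf s,\bf r}$ are exactly that parallel argument made explicit. Each substitution you make is valid, and the non-degeneracy hypotheses you check indeed all reduce to $(B{\bf l}|{\bf s})\neq 0$ via skew-symmetry and bilinearity as you claim.
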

\begin{proof}
Proof of this lemma runs parallelly with Lemma 9.6 of \cite{RH}, just note that $(B{\bf s}|{\bf s})$$=0$.
\end{proof}

\begin{lemma}\label{L6.6}
	Let ${\bf r,s }\in \Z^N \setminus \{\mbf 0\} $ be such that $(B{\bf r}|{\bf s}) =0$ and ${\bf r+s} \neq \mbf 0$. Then we have the following.
	\begin{enumerate}
			\item $\la_{\bf r,r}=\la_{\bf s,s}=\la_{\bf r+s,r+s}$.
				\item $\la_{\bf s+r,s}=\la_{\bf s,s}=\la_{\bf r,r}=\la_{\bf r,s+r}$.
				\item $\la_{\bf r,s}=\la_{\bf s,s}=\la_{\bf r,r}$.	
				\item $\la_{{\bf s+r,}j{\bf r}}=\la_{\bf s,s}=\la_{\bf r,r}$, for all $j \in \Z \setminus \{0\}.$
					\item $\la_{{\bf s,}j{\bf s+r}}=\la_{\bf s,s}=\la_{\bf r,r}$, when $j{\bf s} +{\bf r} \neq \mbf 0$, $j \in \Z$.
	\end{enumerate}

\end{lemma}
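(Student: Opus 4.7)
The plan is to prove all five items of Lemma~\ref{L6.6} by exploiting Equation~(6.2) together with the already-established Lemma~\ref{L6.4}, which applies whenever the relevant pairing $(B\,\cdot\,|\,\cdot\,)$ is nonzero. The essential difficulty is that the hypothesis $(B{\bf r}|{\bf s})=0$ rules out a direct application of L6.4 to the pair $({\bf r},{\bf s})$, so I will introduce an auxiliary vector ${\bf l}\in\Z^N$ in ``generic position.'' The key lemma-free observation I will record first is this: since $B$ is non-degenerate, for any finite family of nonzero ${\bf v}_1,\dots,{\bf v}_k\in\Z^N$, each constraint $(B{\bf l}|{\bf v}_i)=0$ cuts out a proper hyperplane in $\C^N$; a finite union of such hyperplanes cannot cover $\Z^N$, so I may choose an integral ${\bf l}$ making $(B{\bf l}|{\bf v}_i)\neq 0$ simultaneously. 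A short chain argument based on this and L6.4(3) already implies that $\la_{\bf a,a}$ takes one common value $\la$ on all nonzero ${\bf a}\in\Z^N$, which immediately settles part~(1) by picking ${\bf l}$ with $(B{\bf l}|{\bf r}),(B{\bf l}|{\bf s}),(B{\bf l}|{\bf r+s})$ all nonzero.

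For part~(3), I will substitute the original $({\bf r},{\bf s})$ together with such a generic ${\bf l}$ into (6.2). Using $(B{\bf r}|{\bf s})=0$, I note that $(B({\bf s+l})|{\bf r})=(B{\bf l}|{\bf r})\neq 0$ and $(B({\bf r+l})|{\bf s})=(B{\bf l}|{\bf s})\neq 0$; L6.4(5) then collapses $\la_{\bf r,s+l}=\la_{\bf r,r}=\la$ and $\la_{\bf s,r+l}=\la_{\bf s,s}=\la$. Substituting into (6.2) and dividing by $(B{\bf l}|{\bf r+s})$ yields $\la_{\bf r,s}=\la$. For part~(2), I apply (6.2) with ${\bf r}\rightsquigarrow {\bf r+s}$, ${\bf s}\rightsquigarrow{\bf s}$: the term $\la_{\bf r+s,s+l}$ is collapsed via L6.4(1) applied to the pair $({\bf r-l},{\bf s+l})$, whose pairing works out to $-(B{\bf l}|{\bf r+s})\neq 0$, giving $\la_{\bf r+s,s+l}=\la_{\bf s+l,s+l}=\la$; the term $\la_{\bf s,r+s+l}$ is collapsed via L6.4(5) with auxiliary vector ${\bf r+l}$, giving $\la$. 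Equation (6.2) then reads $(B{\bf l}|{\bf r+2s})\la=(B{\bf l}|{\bf r+2s})\la_{\bf r+s,s}$, so choosing ${\bf l}$ with $(B{\bf l}|{\bf r+2s})\neq 0$ yields $\la_{\bf r+s,s}=\la$. The identity $\la_{\bf r,s+r}=\la$ follows by interchanging the roles of ${\bf r}$ and ${\bf s}$.

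Parts~(4) and~(5) follow by the same substitution pattern with $j{\bf r}$ or $j{\bf s}$ in place of ${\bf r}$ or ${\bf s}$, observing that $(B{\bf r}|j{\bf s})=0$ still holds and invoking L6.4(4), L6.4(5) to evaluate the resulting auxiliary $\la$'s. The main technical obstacle I anticipate is the set of degenerate sub-cases in which the required pairing is forced to vanish — concretely, when ${\bf r+2s}=0$ in the argument for part~(2) (so that ${\bf r+s}=-{\bf s}$ and the target lambda is really of the ``level $\mu$'' type), or the analogous situations in parts~(4)–(5) where the shifted sum lies in a $B$-degenerate direction. These exceptional configurations will be handled either by noting they already fall under a different part of Theorem~\ref{t6.1}, or by a two-step reduction through an intermediate ${\bf r}'$ close to ${\bf r}$ for which the generic choice is available, then invoking part~(1) (the ``constancy of $\la_{\bf a,a}$'') to transfer the conclusion back. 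No new input beyond (6.2), Lemma~\ref{L6.4}, and non-degeneracy of $B$ should be required.
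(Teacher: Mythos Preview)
Your approach is essentially identical to the paper's: both choose an auxiliary integral vector (the paper calls it ${\bf k}$) in generic position avoiding finitely many $B$-hyperplanes, use Lemma~\ref{L6.4} to evaluate the shifted $\la$'s as $\la_{\bf r,r}$ and $\la_{\bf s,s}$, and then plug into (\ref{a6.2}) to deduce part~(3). The one simplification you miss is that the paper does \emph{not} rerun the (\ref{a6.2}) substitution for parts (2), (4), (5): because $B$ is skew-symmetric one has $(B{\bf r}\,|\,{\bf r+s})=(B({\bf s+r})\,|\,j{\bf r})=(B{\bf s}\,|\,j{\bf s+r})=0$ automatically, so these parts are immediate consequences of part~(3) applied to the corresponding pair --- this also sidesteps the ${\bf r+2s}=0$ edge case you flagged.
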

\begin{proof}
 First note that (4) and (5) follows immediately from (2) and (3). Therefore we prove (1,2,3). Let $U_{\bf r}$ be denote the orthogonal complement of $B{\bf r}$ for all ${\bf r} \in \Z^N$.  Since $B$ is non degenerate $\Z^N \setminus (U_{\bf r}\cup U_{\bf s} \cup U_{\bf r+s}) \cap \Z^N $ is a proper subset of $\Z^N$. Hence there exists a ${\bf k} \in \Z^N$ such that $ (B{\bf r}|{\bf k}),(B{\bf s}|{\bf k}), (B({\bf r+s})|{\bf k}) \in \C \setminus \{0\}$.\\ Now from Lemma \ref{L6.4} we have $\la_{\bf r,r}=\la_{\bf s,s}=\la_{\bf r+s,r+s}=\la_{\bf k,k}$. Notice that $(B{\bf r}|{\bf s+k})\neq 0 \neq (B{\bf s}|{\bf r+k})$. Hence from Lemma \ref{L6.4} and the equality $\la_{\bf r,r}=\la_{\bf s,s}$  we have $\la_{\bf r,r}=\la_{\bf r,s+k}= \la_{\bf s,s}=\la_{\bf s,r+k}$. Now in equation (\ref{a6.2}) substituting ${\bf l}={\bf k}$ we have $\la_{\bf r,s+k}=\la_{\bf r,s}.$ Thus we have $\la_{\bf r,r}=\la_{\bf r,s}= \la_{\bf s,s}$. Now put ${\bf s}={\bf r+s}$ in the last equality, then we have $\la_{\bf r,r}=\la_{\bf r,r+s}= \la_{\bf r+s,r+s}$. Similarly, $\la_{\bf s,s}=\la_{\bf s,r+s}= \la_{\bf r+s,r+s}$. This completes the proof.
\end{proof}

The next lemma completes the proof of Theorem \ref{t6.1} and it follows from verbatim same proof of Lemma 9.9 and Lemma 9.10 of \cite{RH} using Lemma \ref{L6.4} and Lemma \ref{L6.6}.
\begin{lemma}

		(1) $\la_{j{\mbf s},j{\mbf s}}=\la_{\mbf s,\mbf s}$ for all $j \in \Z \setminus \{0\}$.\\
		(2) $\la_{j{\bf s},p{\bf s}}=\la_{\bf s,s}$ for all $ j+p \in \Z \setminus \{0\}$.\\
		(3) $\la_{{\bf r},{\bf s}}=\la$ for all ${\bf r+s} \neq \mbf 0$ and ${\bf r,s} \neq \mbf 0.$\\
	($4$)	$\la_{\mbf 0,\bf s}=c,$ for all $\mbf s $ $\in \Z^N$.\\
	(5) $\la_{\bf r, -r}=\mu$ for all $\bf r$ $ \neq  \mbf 0$.\\
	(6) $c\mu=\la^2$.
	
	 \end{lemma}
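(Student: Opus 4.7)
The plan is to prove the six assertions in sequence, essentially mirroring the strategy of Lemmas 9.9 and 9.10 of \cite{RH}. The main inputs are Lemmas \ref{L6.4} and \ref{L6.6}, the functional equation (\ref{a6.2}), commutativity of $A$ (which forces $\la_{\bf r,s}=\la_{\bf s,r}$ on the quotient by injectivity from Lemma \ref{L6.3}), and non-degeneracy of $B$, which ensures that certain hyperplanes in $\Z^N$ are proper so integral lattice points avoiding them can be chosen.

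For part (1), I would fix nonzero ${\bf s}$ and pick ${\bf k}\in\Z^N$ with $(B{\bf s}|{\bf k})\neq 0$ (possible by non-degeneracy). Lemma \ref{L6.4}(3) gives $\la_{\bf s,s}=\la_{\bf k,k}$; since $(B(j{\bf s})|{\bf k})=j(B{\bf s}|{\bf k})\neq 0$, the same lemma also gives $\la_{j{\bf s},j{\bf s}}=\la_{\bf k,k}$, whence (1). Part (2) (with $j,p\neq 0$; the case $j=0$ or $p=0$ is handled by (4)) is immediate: $(B(j{\bf s})|p{\bf s})=jp(B{\bf s}|{\bf s})=0$ and $(j+p){\bf s}\neq 0$, so Lemma \ref{L6.6}(3) yields $\la_{j{\bf s},p{\bf s}}=\la_{j{\bf s},j{\bf s}}=\la_{\bf s,s}$ by (1). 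For part (3), the key is to first show $\la_{\bf r,r}$ is constant on $\Z^N\setminus\{0\}$: given nonzero ${\bf r},{\bf s}$, choose ${\bf k}\in\Z^N$ outside the two proper hyperplanes $\{{\bf k}:(B{\bf r}|{\bf k})=0\}$ and $\{{\bf k}:(B{\bf s}|{\bf k})=0\}$, and apply Lemma \ref{L6.4}(3) twice to obtain $\la_{\bf r,r}=\la_{\bf k,k}=\la_{\bf s,s}$; call this common value $\la$. Then for $\la_{\bf r,s}$ with ${\bf r},{\bf s},{\bf r+s}$ nonzero, split into the two cases $(B{\bf r}|{\bf s})\neq 0$ (Lemma \ref{L6.4}(2) gives $\la_{\bf s,r}=\la_{\bf s,s}=\la$, then use $A$-commutativity) and $(B{\bf r}|{\bf s})=0$ (Lemma \ref{L6.6}(3) gives $\la_{\bf r,s}=\la_{\bf r,r}=\la$). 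Part (4) is immediate from $t^{\bf 0}$ acting as the scalar $c$.

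Part (5) is the place where (\ref{a6.2}) is genuinely needed. Substituting ${\bf s}\mapsto -{\bf r}-{\bf l}$ into (\ref{a6.2}) and using $(B{\bf l}|{\bf l})=0$, the three-term relation collapses to
\[
(B{\bf l}|{\bf r})\bigl[\la_{{\bf r}+{\bf l},-{\bf r}-{\bf l}}-\la_{\bf r,-r}\bigr]=0.
\]
Hence $\la_{\bf r,-r}$ is invariant under ${\bf r}\mapsto {\bf r}+{\bf l}$ whenever $(B{\bf l}|{\bf r})\neq 0$ and ${\bf r}+{\bf l}\neq 0$. Given two nonzero ${\bf r},{\bf r}'$, I connect them in at most two such shifts by inserting an intermediate ${\bf r}''\in\Z^N$ lying outside the two proper hyperplanes $\{(B{\bf r}|\cdot)=0\}$ and $\{(B{\bf r}'|\cdot)=0\}$, yielding a single constant $\mu:=\la_{\bf r,-r}$. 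For (6), associativity on $W$ closes the loop: pick ${\bf r},{\bf k}\in\Z^N$ with ${\bf r},{\bf k},{\bf k}-{\bf r}$ all nonzero, so
\[
(t^{\bf r}\,t^{-{\bf r}})\,t^{\bf k}=\mu\,c\,t^{\bf k},\qquad t^{\bf r}\,(t^{-{\bf r}}\,t^{\bf k})=\la\cdot\la\,t^{\bf k}=\la^2\,t^{\bf k},
\]
and injectivity of $t^{\bf k}$ (Lemma \ref{L6.3}) cancels the common operator factor to give $\mu c=\la^2$.

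The main obstacle, such as it is, is the lattice-point bookkeeping in parts (3) and (5): one has to confirm at each stage that integral ${\bf k}$ (or ${\bf r}''$) can be chosen to simultaneously avoid finitely many proper hyperplanes and to keep all auxiliary sums nonzero, so that Lemmas \ref{L6.4} and \ref{L6.6} and the functional equation (\ref{a6.2}) apply. Once this is set up, each assertion reduces to a short substitution or a two-step connectivity argument, and the chain (1)--(6) falls out with no further input.
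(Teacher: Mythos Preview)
Your proposal is correct and follows essentially the same approach the paper indicates: the paper's proof is simply a pointer to Lemmas~9.9 and~9.10 of \cite{RH} (applied via Lemmas~\ref{L6.4} and~\ref{L6.6}), and you have reproduced exactly that line of argument, including the use of (\ref{a6.2}) for part~(5) and the associativity computation for part~(6). The only minor bookkeeping I would make explicit is that in the substitution ${\bf s}\mapsto -{\bf r}-{\bf l}$ for part~(5) one needs ${\bf l}\neq 0$ (i.e.\ ${\bf r}''\neq{\bf r}$ and ${\bf r}''\neq{\bf r}'$) in addition to the hyperplane conditions, but this is finitely many more points to avoid and poses no difficulty.
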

 
{ \bf Proof of Lemma 4.4(3)}, i.e  $[I_p,I_q]\subseteq I_{p+q-2}$ for all $p+q\geq 3$ and  $[I_1,I_1] \subseteq   I_1.$\\
Let $I \subseteq \{1,2,\dots,p\}$ and $J \subseteq \{1,2,\dots,q\}$. Let ${\mbf s_I}=\dis{\sum_{i \in I}{\mbf s_i}}$ and ${\mbf r_J}=\dis{\sum_{i \in J}{\mbf r_i}}$. Also set ${\mbf s_\Phi}=0 $ and ${\mbf r_\Phi}=0$. Then \\

$T_p({\mbf k, \mbf s_1,\dots, \mbf s_p})=\dis{\sum_{0\leq a \leq p}\sum_{|I|=a}(-1)^aT({\mbf k+ \mbf s_I})}$ and \\

$T_q({\mbf l,\mbf r_1,\dots,\mbf r_q})=\dis{\sum_{0\leq b \leq q}\sum_{|J|=b}(-1)^bT({\mbf l+ \mbf r_J})}.$ \\

Therefore, $[T_p({\mbf k,\mbf s_1,\dots,\mbf s_p}),T_q({\mbf l,\mbf r_1,\dots,\mbf r_q})]$\\

$=\dis{\sum_{\substack{0\leq a \leq p\\0 \leq b \leq q}}\sum_{\substack{|I|=a\\|J|=b}}}(-1)^{a+b} [T({\mbf k+\mbf s_I}),T({\mbf l+ \mbf r_J})]$\\
$=-\dis{\sum_{\substack{0\leq a \leq p\\0 \leq b \leq q}}\sum_{\substack{|I|=a\\|J|=b}}}(-1)^{a+b}(B({\mbf k+\mbf s_I})|{\mbf l+ \mbf r_J})(T({\mbf k+ \mbf s_I})+T({\mbf l+\mbf r_J})-T({\mbf k+\mbf l+\mbf s_I+ \mbf r_J})) .$\\
{\bf Claim 1:} $\dis{\sum_{\substack{0\leq a \leq p\\0 \leq b \leq q}}\sum_{\substack{|I|=a\\|J|=b}}}(-1)^{a+b}(B({\mbf k+\mbf s_I})|{\mbf l+ \mbf r_J})T({\mbf l+ \mbf r_J})=0.$ \\
Fix some $J$ such that $|J|=b$. Now to prove this claim it is sufficient to prove that $\dis{\sum_{0\leq a \leq p}\sum_{|I|=a}}(-1)^a(B({\mbf k+\mbf s_I})|{\mbf l+\mbf r_J})=0, $ which means it is sufficient to prove that\\ $\dis{\sum_{0\leq a \leq p}\sum_{|I|=a}}(-1)^a({\mbf k+ \mbf s_I})=0$. But this sum is equal to \\
$\dis{\sum_{0\leq a \leq p}}(-1)^a{{p}\choose {a}}{\bf k}+\dis{\sum_{0\leq a \leq p}\sum_{|I|=a}}(-1)^a{\mbf s_I}$, and this is easy to check to be 0.\\
{\bf Claim 2:} $\dis{\sum_{\substack{0\leq a \leq p\\0 \leq b \leq q}}\sum_{\substack{|I|=a\\|J|=b}}}(-1)^{a+b}(B({\mbf k+ \mbf s_I})|{\mbf l+ \mbf r_J})T({\mbf k+\mbf s_I})=0.$ This follows similarly like Claim 1.\\
Now we evaluate $\dis{\sum_{\substack{0\leq a \leq p\\0 \leq b \leq q}}\sum_{\substack{|I|=a\\|J|=b}}}(-1)^{a+b}(B({\mbf k+ \mbf s_I})|{\mbf l+ \mbf r_J})T({\mbf k+\mbf l+\mbf s_I+\mbf r_J}) .$ For that we write the above expression as the sum of four terms given by
\begin{align}
	\dis{\sum_{\substack{0\leq a \leq p\\0 \leq b \leq q}}\sum_{\substack{|I|=a\\|J|=b}}}(-1)^{a+b}(B{\mbf k}|{\mbf l})T({\mbf k+\mbf l+\mbf s_I+\mbf r_J})\\
	\dis{\sum_{\substack{0\leq a \leq p\\0 \leq b \leq q}}\sum_{\substack{|I|=a\\|J|=b}}}(-1)^{a+b}(B{\mbf s_I}|{\mbf l})T({\mbf k+ \mbf l+\mbf s_I+\mbf r_J})\\
	\dis{\sum_{\substack{0\leq a \leq p\\0 \leq b \leq q}}\sum_{\substack{|I|=a\\|J|=b}}}(-1)^{a+b}(B{\mbf k}|{\mbf r_J})T({\mbf k+\mbf l+\mbf s_I+\mbf r_J})\\
	\dis{\sum_{\substack{0\leq a \leq p\\0 \leq b \leq q}}\sum_{\substack{|I|=a\\|J|=b}}}(-1)^{a+b}(B{\mbf s_I}|{\mbf r_J})T({\mbf k+\mbf l+\mbf s_I+\mbf r_J}).
\end{align}
Now consider (7.3) which is clearly equal to $(B{\bf k}|{\bf l})T_{p+q}({\mbf k+\mbf l,\mbf s_1,\dots,\mbf s_p,\mbf r_1,\dots,\mbf r_q}).$ Now consider the expression (7.5), to simplify this we fix $i \in \{1,2,\dots,q\}$. Then consider the expression\\
$\dis{\sum_{\substack{0\leq a \leq p\\1 \leq b \leq q}}\sum_{\substack{|I|=a\\|J|=b}}}(-1)^{a+b}(B{\mbf k}|{\mbf r_i})T({\mbf k+\mbf l+\mbf s_I+\mbf r_J})$ (Note that $b=0$ not occur here). This is equal to $(B{\mbf k}|{\mbf r_i})T_{p+q-1}({\mbf k+\mbf l+\mbf r_i,\mbf s_1,\dots,\mbf s_p,\mbf r_1,\dots,\hat{ \mbf r}_i,\dots,\mbf r_q}).$ Hence (7.5) is equal to $-\dis{\sum_{i=1}^{q}}(B{\mbf k}|{\mbf r_i})T_{p+q-1}({\mbf k+\mbf l+\mbf r_i,\mbf s_1,\dots, \mbf s_p,\mbf r_1,\dots,\hat{\mbf r}_i,\dots,\mbf r_q}).$\\
 In the similar manner (7.4) is equal to $-\dis{\sum_{i=1}^{p}}(B{\mbf s_i}|{\mbf l})T_{p+q-1}({\mbf k+\mbf l+\mbf s_i,\mbf s_1,\dots,\hat{\mbf s}_i,\dots, \mbf s_p,\mbf r_1,\dots,\mbf r_q}).$ Now fix $i \in \{1,2, \dots , p\}$, $j \in \{1,2, \dots, q\}$ and consider the expression \\
 $ \dis{\sum_{\substack{1\leq a \leq p\\1 \leq b \leq q}}\sum_{\substack{|I|=a\\|J|=b\\i \in I,j \in J}}}(-1)^{a+b}(B{\mbf s_i}|{\mbf r_j})T({\mbf k+\mbf l+ \mbf s_I+\mbf r_J})$\\ $=(B{\mbf s_i}|{\mbf r_j})T_{p+q-2}({\mbf k+\mbf l+\mbf s_i+\mbf r_j,\mbf s_1,\dots,\hat{\mbf s}_i,\dots, \mbf s_p,  \mbf r_1,\dots,\hat{\mbf r}_j,\dots,\mbf r_q}).$ Therefore (7.6) is equal to 
 $\dis{\sum_{\substack{1\leq i \leq p\\1 \leq j \leq q}}}(B{\mbf s_i}|{\mbf r_j})T_{p+q-2}({\mbf k+\mbf l+\mbf s_i+\mbf r_j,\mbf s_1,\dots,\hat{\mbf s}_i,\dots, \mbf s_p,\mbf r_1,\dots,\hat{\mbf r}_j,\dots,\mbf r_q}).$ This argument fails when $p=q=1$. In fact in that case (7.6) is equal to $(B{\mbf s_1}|{\mbf r_1})T({\mbf k+\mbf l+\mbf s_1+\mbf r_1})$.\\
 
  Therefore $[T_p({\mbf k, \mbf s_1,\dots,  \mbf s_p}),T_q({\mbf l, \mbf r_1,\dots, \mbf r_q})]$\\ =
$ \begin{cases}
 	-(B{\mbf k}|{\mbf l})T_{p+q}({\mbf k+ \mbf l, \mbf s_1,\dots, \mbf s_p, \mbf r_1,\dots, \mbf r_q}) \\+\dis{\sum_{i=1}^{q}}(B{\mbf k}|{\mbf r_i})T_{p+q-1}({\mbf k+ \mbf l+ \mbf r_i, \mbf s_1,\dots, \mbf s_p, \mbf r_1,\dots,\hat{ \mbf r}_i,\dots, \mbf r_q})\\
 	+\dis{\sum_{i=1}^{p}}(B{\mbf s_i}|{\mbf l})T_{p+q-1}({\mbf k+  \mbf l+ \mbf s_i, \mbf s_1,\dots,\hat{ \mbf s}_i,\dots,  \mbf s_p, \mbf r_1,\dots, \mbf r_q})\\
 	- \dis{\sum_{\substack{1\leq i \leq p\\1 \leq j \leq q}}}(B{\mbf s_i}|{\mbf r_j})T_{p+q-2}({\mbf k+ \mbf l+ \mbf s_i+ \mbf r_j, \mbf s_1,\dots,\hat{ \mbf s}_i,\dots,  \mbf s_p, \mbf r_1,\dots,\hat{ \mbf r}_j,\dots, \mbf r_q}), when  \, \, (p,q)\neq (1,1) \\
 -(B{\mbf k}|{\mbf l})T_2({\mbf k+ \mbf l, \mbf s_1, \mbf r_1}) +(B{\mbf k}|{\mbf r_1})T_1({\mbf k+  \mbf l+ \mbf r_1, \mbf s_1})+(B{\mbf s_1}|{\mbf l})T_1({\mbf k+ \mbf l+ \mbf s_1, \mbf r_1}) \\
  -(B{\mbf s_1}|{\mbf r_1})T_1({\mbf k+ \mbf l+ \mbf s_1+ \mbf r_1}), \,\, when \, (p,q)=(1,1).
 \end{cases} $
 
 
  This completes the proof.\\

{\bf Acknowledgments:} 
We would like to thank anonymous referee for helpful suggestions to improve the paper.
  For this project the second author is supported by funds of the National Natural Science Foundation of China (Grant No. 12071136) and Science and Technology Commission of Shanghai Municipality (Grant No. 22DZ2229014).

\end{document}